\newcommand{\re}{\mathrm{Re}\,}
\newtheorem{thm}{Theorem}[section]
\newtheorem{lem}[thm]{Lemma}
\newtheorem{cor}[thm]{Corollary}
\newtheorem{rem}[thm]{Remark}
\newtheorem{prop}[thm]{Proposition}
\newcounter{probno}
\newenvironment{prob}{

\vspace*{.2in}
\noindent{\bf Problem \arabic{probno}.}}
{\stepcounter{probno}
\vspace*{.2in}

}
\renewcommand{\qed}{\hfill$\Box$ \par \medskip}
\newcommand{\id}{\mathrm{id}}
\newcommand{\pair}[2]{\left\langle #1,#2 \right\rangle}
\newcommand{\R}{\mathbf{R}}
\newcommand{\C}{\mathbf{C}}
\newcommand{\cp}{\mathbf{P}}
\newcommand{\Z}{\mathbf{Z}}
\newcommand{\N}{\mathbf{N}}
\newcommand{\Q}{\mathbf{Q}}
\newcommand{\tto}{\dashrightarrow}
\newcommand{\ve}{\mathbf{e}}
\newcommand{\aut}{\mathop{\mathrm{Aut}}}
\newcommand{\crit}{\mathop{\mathrm{Crit}}}
\newcommand{\htop}{h_{top}}
\newtheorem*{thma}{Theorem A}
\newcommand{\maximal}{{A}}
\newtheorem*{thmb}{Theorem B}
\newcommand{\allreal}{{C}}
\newtheorem*{thmc}{Theorem C}
\newcommand{\nonmaximal}{{B}}
\newcommand{\qu}[1]{#1}
\newcommand{\cg}[1]{#1}
\newcommand{\chg}[1]{\textcolor{black}{#1}}
\newcommand{\fp}{p_{fix}}
\title{Entropy of real rational surface automorphisms}
\author{Jeffrey Diller}
\address{Department of Mathematics\\
         University of Notre Dame\\
         Notre Dame, IN 46556}
\email{diller.1@nd.edu}
\author{Kyounghee Kim}
\address{Department of Mathematics\\
         Florida State University\\
         Tallahassee, FL 32308}
\email{kim@math.fsu.edu}
\subjclass{}
\keywords{}
\begin{document}

\begin{abstract}
We compare real and complex dynamics for automorphisms of rational surfaces that are obtained by lifting \chg{some} quadratic birational maps of the plane.  In particular we show how to exploit the existence of an invariant cubic curve to understand how the real part of an automorphism acts on homology.  We apply this understanding to give examples where the entropy of the full (complex) automorphism is the same as its real restriction.  Conversely and by different methods, we exhibit different examples where the entropy is strictly decreased by restricting to the real part of the surface. \cg{Finally, we give an example of a rational surface automorphism with positive entropy whose periodic cycles are all real.} 
\end{abstract}

\maketitle

\section*{Introduction}
Any automorphism of the complex projective plane $\cp^2$ is \cg{ a linear projective transformation}.  If one blows up sufficiently many points in $\cp^2$, however, the resulting rational surface $X\to\cp^2$ sometimes admits an automorphism $f:X\to X$ with much more interesting dynamics.  This was known already in some sense by Coble \cite{cob}, but recent papers (\cite{beki}, \cg{\cite{bla},} \cg{\cite[Exemple 9.4]{can3}}, \cite{mcm}, etc) furnish new constructions and many more examples.  Many of these automorphisms are `real', in the sense that the points blown up lie in $\cg{\cp^2(\R)}\subset\cp^2$ and the automorphism \chg{$f=f_\C:X\to X$} restricts to a diffeomorphism $f_\R:X(\R)\to X(\R)$ of the real two dimensional submanifold of $X$ lying over $\cg{\cp^2(\R)}$.  

Our purpose in this article is to compare the real and the complex dynamics of such automorphisms.  Since dynamical complexity of a diffeomorphism $f$ is usually quantified by its topological entropy $h_{top}(f)$, we consider in particular whether \chg{$h_{top}(f_\C) = h_{top}(f_\R)$} for a real automorphism $f:X\to X$ on a blowup $X$ of $\cp^2$.  When this happens we say that $f_\R$ `has maximal entropy.'  While our results are limited to particular families of examples, they indicate a range of possibilities, give reasonable methods for verifying or disproving equality of entropy, and raise some interesting questions for further investigation.

Any quadratic plane birational map has at most three distinct indeterminate points.  Here we restrict attention to those which are non-degenerate in the sense that they have exactly three such points.  That is, we \chg{consider quadratic birational maps of the} form $\check f := T\circ J$, where $T\in\aut(\cp^2)$ is linear and $J[x_1,x_2,x_3]=[x_2x_3,x_3x_1,x_1x_2]$ is the standard quadratic involution, presented in homogeneous coordinates.  Because of the second factor, $\check f$ contracts each of the lines $E_j := \{x_j=0\}$, $j=1,2,3$, to a distinct point in $\cp^2$, and $\check f$ is indeterminate at the points $p_1 := [1,0,0], p_2 := [0,1,0], p_3 := [0,0,1]$ where the lines meet pairwise.  Suppose that $T$ is somehow chosen so that there are positive integers $n_1,n_2,n_3$ and a permutation $\sigma\in\Sigma_3$ for which all points $\check f^j(E_i)$, $1\leq j \leq n_i$, are distinct and 
\begin{equation}
\label{eqn:orbitrelation}
\check f^{n_j}(E_j) = p_{\sigma(j)}, \quad j=1,2,3.
\end{equation}
Then if $\pi:X\to\cp^2$ is the blowup of $\cp^2$ at all points $\check f^k(E_j)$, $j=1,2,3$ and $1\leq k\leq n_j$ in the three critical orbits, the birational map $\check f$ lifts to an automorphism $f:X\to X$.  We call $n_1,n_2,n_3,\sigma$ \emph{orbit data} and say that the data is \emph{realizable} if the linear map $T$ can be chosen so that \eqref{eqn:orbitrelation} holds.  

Note that if the linear map $T$ is real then so are the critical orbits and the map $f$.  In particular $f$ restricts to a diffeomorphism $f_\R:X(\R)\to X(\R)$ of the compact real surface $X(\R) := \overline{\cg{\cp^2(\R)}\setminus \crit(\pi)} \subset X$.  Our first result is

\begin{thma} One can choose a real linear $T\in\aut(\cp^2)$ so that the map $\check f = T\circ J$ lifts to an automorphism $f:X\to X$ such that $h_{top}(f_\R) = h_{top}(f) >0$.  This happens in particular for \chg{some $T$ so that} $\check f$ realizes one of the following sets of orbit data:
\begin{itemize}
 \item $\sigma = (123)$ is cyclic, $n_1=n_2 =1$ and $n_3 \geq 8$;
 \item $\sigma = (123)$ is cyclic, $n_1=2$, $n_2=n_3\geq 4$.
\end{itemize}
\end{thma}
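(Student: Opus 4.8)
The plan is to trap $h_{top}(f_\R)$ between two bounds that are then forced to agree. On the upper side, $X(\R)\subset X$ is a compact $f$-invariant set, so $h_{top}(f_\R)\le h_{top}(f_\C)$, and by the Gromov--Yomdin theorem $h_{top}(f_\C)=\log\lambda$, where $\lambda=\lambda(f)\ge 1$ is the spectral radius of $f^*$ on $H^2(X;\R)$. On the lower side, $f_\R$ is a real-analytic diffeomorphism of the closed surface $X(\R)$, so Manning's inequality (or Yomdin's theorem, applicable since $f_\R$ is smooth) gives $h_{top}(f_\R)\ge\log\rho$, where $\rho$ is the spectral radius of $(f_\R)_*$ on $H_1(X(\R);\R)$. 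Chaining these already shows $\rho\le\lambda$, so it suffices, for each of the two families of orbit data, to produce a \emph{real} linear $T$ realizing that data for which $\lambda$ itself occurs as an eigenvalue of $(f_\R)_*$ on $H_1(X(\R);\R)$: then $\log\lambda\le\log\rho\le h_{top}(f_\R)\le\log\lambda$ forces equality, and $\lambda>1$ makes the common value positive.

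The substance is therefore to describe $(f_\R)_*$ on $H_1(X(\R);\R)$ precisely enough to locate $\lambda$ inside it, and this is where the invariant cubic enters. For realizable orbit data one can build $\check f=T\circ J$ so that its three critical orbits all lie on a cubic $C$ that $\check f$ preserves; taking $T$ real makes $C$ real and makes the proper transform of $C$ an $f$-invariant anticanonical curve. Because blowing up a real point of a real surface is a connected sum with $\rp^2$, the real surface $X(\R)$ is a connected sum of $N+1$ copies of $\rp^2$ with $N=n_1+n_2+n_3$, and $H_1(X(\R);\R)\cong\R^N$ is spanned by the $N$ exceptional circles $E_j(\R)$ and a line $\ell$ of $\cp^2(\R)$ modulo a single relation. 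In this basis $(f_\R)_*$ permutes and recombines classes according to exactly the combinatorics of the critical orbits that governs $f^*$ on the basis $\{H,E_1,\dots,E_N\}$ of $H^2(X;\mathbb{Z})$, the only discrepancy being the one relation on $X(\R)$, which collapses the anticanonical direction. Making this dictionary exact -- in particular checking that the relation really corresponds to the $f^*$-fixed class $-K_X$ and to nothing more -- is where one uses the restriction homomorphism $\mathrm{Pic}(X)\to\mathrm{Pic}(C)$, which intertwines $f^*$ with pullback by $f|_C$, to track signs and incidences at the real points of $C$ (the orbit positions and the multiplier of $f$ along $C$ being real numbers there). The upshot is that the characteristic polynomial of $(f_\R)_*$ on $H_1(X(\R);\R)$ equals that of $f^*$ on the rank-$N$ invariant sublattice $(-K_X)^\perp$ of $H^2(X;\mathbb{Z})$, so the Salem factor carrying $\lambda$ is common to both and $\rho=\lambda$.

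It then remains to realize the two prescribed sets of orbit data by a real $\check f$ of this kind and to verify $\lambda>1$. Existence of a real $T$ realizing orbit data $(n_1,n_2,n_3;\sigma)$ together with a prescribed real invariant cubic follows by specializing the constructions of McMullen and of Bedford--Kim (\cite{mcm}, \cite{beki}): the positions of the critical orbits along $C$, and the multiplier of $f$ along $C$, are pinned down by solving the orbit relations \eqref{eqn:orbitrelation} as polynomial equations, and for the cyclic permutation $\sigma=(123)$ with the stated orbit lengths these admit solutions over $\R$. Finally, the characteristic polynomial of $f^*$ on $(-K_X)^\perp$ is the combinatorially defined polynomial attached to $(n_1,n_2,n_3;\sigma)$, whose largest root exceeds $1$ exactly when the orbit is long enough; one checks directly that this holds for $(1,1,n_3;(123))$ with $n_3\ge 8$ and for $(2,n_2,n_3;(123))$ with $n_2=n_3\ge 4$, giving $h_{top}(f_\C)=\log\lambda>0$ and hence, by the above, $h_{top}(f_\R)=\log\lambda$.

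The step I expect to be hardest is the homological comparison of the second paragraph: guaranteeing that the real locus does not ``lose'' the leading eigenvalue. Since $H_1(X(\R);\R)$ has rank one less than $H^2(X;\R)$, a careless choice of the real structure -- the number of connected components of $C(\R)$, whether the singular point of $C$ is real, the sign of the multiplier of $f$ along $C$ -- could make $(f_\R)_*$ collapse onto a proper invariant subspace missing $\lambda$, or could obstruct real realizability altogether. So one must commit to a specific, internally consistent choice -- for instance a cuspidal cubic with real cusp, so that the smooth locus of $C(\R)$ is a copy of $(\R,+)$ on which $f$ acts by a real multiplier -- and then verify both that the orbit relations have a real solution compatible with that choice and that the induced action on $H_1(X(\R);\R)$ is the full combinatorial one. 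By contrast the entropy inequalities and the closing polynomial computation are routine.
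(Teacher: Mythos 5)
Your overall frame is the same as the paper's: squeeze $h_{top}(f_\R)$ between Yomdin's lower bound $\log\rho((f_\R)_*)$ on $H_1(X(\R);\R)$ and Gromov's upper bound $\log\rho(f_*)$ on $H_2(X;\R)$, and then show the two spectral radii agree for the given orbit data. The gap is in the middle step. Your second paragraph asserts that, once the critical orbits lie on a real invariant cuspidal cubic, $(f_\R)_*$ ``recombines classes according to exactly the combinatorics of the critical orbits'' and that its characteristic polynomial equals that of $f_*$ on $(-K_X)^\perp$, so that $\rho=\lambda$ automatically. That general principle is false, and it cannot be rescued by the restriction map $\mathrm{Pic}(X)\to\mathrm{Pic}(C)$: with the very same construction (real cuspidal cubic with real cusp, real multiplier $\delta>1$), the orbit data $n_1=n_2=3$, $n_3$ large yields $\rho((f_\R)_*)<\rho(f_*)$ (this is exactly how the paper gets Theorem B), and for data such as $2,3,5$ with $\sigma=(123)$ the real action is even periodic while $\rho(f_*)>1$. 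The obstruction is precisely the sign problem you mention in passing: the exceptional circles $e_{i,j}\subset X(\R)$ admit no $f_\R$-compatible orientation, so $(f_\R)_*e_{i,j}=\pm e_{i,j+1}$ with signs determined by whether $\check f$ preserves or reverses two-dimensional orientation at $p_{i,j}$, and the classes of the real exceptional lines $e_i^-$ decompose in $H_1(X(\R);\R)$ with signs depending on where they cross $C(\R)$ relative to the blown-up points. Whether the Salem factor survives these signs depends on the actual linear ordering of the three critical orbits along $C(\R)$, which varies with the orbit data.

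Consequently the proof has to do what your proposal only gestures at: (i) use the explicit parameters $t_i$ of Theorem \ref{thm:realizable} to determine the order of the points $p_i^\pm$, $f_C^{\pm1}(p_i^\mp)$ and the orbit points along $C(\R)$ for each of the two families (and, as a byproduct, to check the parameters are distinct so the data is actually realized by a real basic map); (ii) convert that ordering into the signed action $(f_\R)_*$ on the generators $e_{i,j}$ via the orientation criterion and the description of real line classes (Propositions \ref{prop:classofrealline}, \ref{prop:orientation} and Theorem \ref{thm:realaction}); and (iii) compute the resulting characteristic polynomial and check it agrees with $\chi(t)$ up to a factor $(t-1)$, so that the leading root $\delta>1$ (which exists since $n_1+n_2+n_3\ge 10$) is shared. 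Steps (i)--(iii) are exactly where the content lies, and your argument as written skips them by appealing to an identification that other orbit data refute.
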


For the first set of orbit data, this theorem was established by Bedford and the second author in \cite{beki2}.  The methods there were somewhat ad hoc.  Our proof here is more systematic and relies on two important inequalities that bound the entropy of a map in terms of the induced pushforward action on homology.  
Specifically, we use the inequalities
$$
\log\rho(f_{\R,*})\leq h_{top}(f_\R) \leq h_{top}(f) = \log \rho(f_*),
$$
where $\rho$ denotes spectral radius and $\cg{(f_\R)_*}, f_*$ are the induced linear actions on $H_1(X(\R);\R)$ and $H_2(X;\R)$, respectively.  The first and last (in)equalities are due to Yomdin \cite{yom} \qu{(see also \cite{man})} and Gromov \cite{gro}.  Together, they imply that $f_\R$ will have maximal entropy when $f_{\R}$ expands homology classes of real curves as fast as $f_*$ expands homology classes of complex curves.  

It is not difficult to write $f_*$ down explicitly in the present context, but sign issues make it more difficult to find $\cg{(f_\R)_*}$.  There is no natural orientation for closed curves in $X(\R)$ that will be respected by $\cg{(f_\R)_*}$.  To cope with this we impose an additional condition on $\check f$ that has proven important in earlier work (\cite{mcm,dil,ueh}) for guaranteeing that \eqref{eqn:orbitrelation} is satisfied.  Namely, we require that all critical orbits of $\check f$ lie on an $\check f$-invariant cubic curve $C$.  This additional condition allows us to resolve orientation issues and thereby effectively compute $\cg{(f_\R)_*}$.

While the work devolves into analysis of various cases, we are finally able to compute the action $\cg{(f_\R)_*}$ for automorphisms arising from essentially any orbit data that can be realized by a non-degenerate quadratic birational map that fixes the curve $C$.  More often than not, it turns out that $\rho(\cg{(f_\R)_*}) < \rho(f_*)$ so that the above method does not tell us whether or not $f_{\R}$ has maximal entropy.  In fact, in some cases $\rho(f_*) > 1$ while the real action $\cg{(f_\R)_*}$ is periodic.  That is, $h_{top}(f)$ is positive, whereas since $\rho(\cg{(f_\R)_*}) = 1$, we cannot infer the same for $h_{top}(f_\R)$.  We do not know whether $h_{top}(f_\R)$ actually does vanish in any particular case, but we can at least show that $h_{top}(f_\R)$ sometimes fails to be maximal.

\begin{thmb}
One can choose a real linear $T\in\aut(\cp^2)$ so that $\check f = T\circ J$ lifts to an automorphism $f:X\to X$ such that $h_{top}(f) > h_{top}(f_\R)$.  This happens in particular when $T$ is chosen so that $\check f$ realizes the orbit data $\sigma = (123)$, $n_1 = n_2 = 3$, $n_3>>1$.
\end{thmb}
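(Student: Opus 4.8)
The proof of Theorem~B runs in three stages, the first two of which reuse machinery already developed, and the third of which is the promised ``different method.''

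\emph{Realizability and the complex entropy.} We must first produce a real $T\in\aut(\cp^2)$ for which $\check f = T\circ J$ satisfies \eqref{eqn:orbitrelation} with $\sigma = (123)$, $n_1 = n_2 = 3$, $n_3\gg 1$, \emph{and} preserves a cubic curve. As in our other constructions we take the cubic $C$ so that $\check f$ restricts to an automorphism of its smooth locus given by multiplication by a single ``determinant'' $\delta$; then the orbit relation \eqref{eqn:orbitrelation} turns into a finite system of equations whose solvability in $\delta$ is controlled by the characteristic polynomial $\chi(t)$ of the induced action $f_*$ on $H_2(X;\Z)$. Since $\chi$ has real coefficients, we may select a real admissible value of $\delta$, and then $T$ itself can be taken real. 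For $n_3$ large one checks that $\rho(f_*)=\lambda>1$, so by the theorems of Gromov and Yomdin \cite{gro,yom} quoted above, $h_{top}(f)=\log\lambda>0$.

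\emph{Why the Theorem~A method fails here.} Running the homological analysis of Theorem~A, we use the orientation bookkeeping along $C(\R)$ to compute the pushforward $(f_\R)_*$ on $H_1(X(\R);\R)$ for this orbit data, and we find that it is quasi-unipotent, so that $\rho\bigl((f_\R)_*\bigr)=1$. Hence the lower bound $\log\rho\bigl((f_\R)_*\bigr)\le h_{top}(f_\R)$ is vacuous and the sandwich $\log\rho((f_\R)_*)\le h_{top}(f_\R)\le h_{top}(f)=\log\rho(f_*)$ gives no information beyond $h_{top}(f_\R)\le\log\lambda$. A genuinely different argument is therefore needed to obtain a \emph{strict} inequality, i.e., an upper bound on $h_{top}(f_\R)$ lying strictly below $\log\lambda$.

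\emph{The new ingredient: a periodic-point count.} On the complex side, choosing $\delta$ generically so that the fixed points of $f^n$ are simple for every $n$, holomorphicity forces each such fixed point to have Lefschetz index $+1$, so the topological Lefschetz formula on the (simply connected) surface $X$ gives the clean count
\[
\#\{x\in X : f^n(x)=x\} \;=\; L(f^n)\;=\;\lambda^n+O(1).
\]
On the real side, $f_\R$ is a real-analytic diffeomorphism of a compact surface, so by Katok's construction of horseshoes the entropy is detected by periodic points:
\[
h_{top}(f_\R)\;\le\;\limsup_{n\to\infty}\frac1n\log\#\{x\in X(\R):f_\R^n(x)=x\}.
\]
Since $\#\{x\in X(\R):f_\R^n(x)=x\}\le\#\{x\in X:f^n(x)=x\}$, this already recovers $h_{top}(f_\R)\le\log\lambda$; to get the strict inequality it suffices to show that the real solutions of $f^n(x)=x$ number only $O(\mu^n)$ for some fixed $\mu<\lambda$, for then $h_{top}(f_\R)\le\log\mu<\log\lambda=h_{top}(f)$. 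Here the invariant cubic re-enters: apart from the $O(1)$ periodic points that lie on $C$ (on which $\check f$ acts by $t\mapsto\delta t$ and so has boundedly many periodic points), the remaining $\lambda^n+O(1)$ periodic points of $f^n$ are the saddle points that equidistribute toward the measure of maximal entropy, and one bounds the real ones among them by exploiting the real dynamics transverse to $C(\R)$ together with the identity $f^*\eta=\delta\,\eta$ for the invariant meromorphic $2$-form $\eta$, which forces the real part of the support of that measure to be exponentially thin.

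This last estimate is the crux and the main obstacle. The complex count is effortless precisely because $f$ is holomorphic; for the real count there is no B\'ezout-type mechanism making real intersections exponentially rarer than complex ones in general, so establishing $\#\{x\in X(\R):f_\R^n(x)=x\}=O(\mu^n)$ with $\mu<\lambda$ genuinely requires the real geometry near $C(\R)$ and control of how the dynamically generated varieties $\{f^{-n}(x)\in\gamma\}$ sit relative to $X(\R)$. An equivalent repackaging of the same difficulty bounds $h_{top}(f_\R)$ by the exponential growth rate of the length of $f_\R^n(\gamma)$ for a real algebraic curve $\gamma$, then bounds that length by the number of \emph{real} points in which the degree-$\lambda^n$ curve $f^n(\gamma)$ meets a generic real line (via integral geometry); either way, what must be shown is that these iterates, though of complex degree $\lambda^n$, are far from maximally real.
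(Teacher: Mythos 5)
Your outline stalls exactly where you say it does, and that missing step is a genuine gap, not a technicality. The inequality $h_{top}(f_\R)\leq\limsup\frac1n\log\#\{x\in X(\R):f_\R^n(x)=x\}$ from Katok is fine, but the required estimate $\#\{x\in X(\R):f_\R^n(x)=x\}=O(\mu^n)$ with $\mu<\rho(f_*)$ is never established, and nothing in the paper (or in the literature you cite) supplies it: even granting $h_{top}(f_\R)<h_{top}(f)$, equidistribution of saddles toward the maximal entropy measure only forces the \emph{proportion} of real periodic points to tend to zero, not an exponentially smaller count, so the estimate cannot even be bootstrapped a posteriori; as an a priori step it is simply an open assertion. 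There is no "exponential thinness" mechanism coming from $f^*\omega=\delta\omega$ near $C(\R)$ of the kind you invoke. (A smaller factual slip: for the orbit data $3,3,n$ the action $(f_\R)_*$ is not quasi-unipotent; the paper computes its characteristic polynomial $\phi_n$ and shows it has a root outside the unit circle, so $\rho((f_\R)_*)>1$ but still $<\delta$ -- the sandwich is inconclusive either way, so this does not change your logic, but the claim as stated is wrong.)

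The route the paper actually takes avoids any counting of real periodic points. It uses the ergodic-theoretic rigidity of the complex map: by Bedford--Lyubich--Smillie and Cantat there is a \emph{unique} measure of maximal entropy $\mu$ for $f$, its support contains every saddle periodic point whose stable and unstable manifolds are not algebraic, and by Newhouse $f_\R$ admits some measure of maximal entropy supported in $X(\R)$; consequently (Theorem \ref{thm:allrealsaddles}) $h_{top}(f_\R)=h_{top}(f)$ if and only if all saddle periodic points lie in $X(\R)$ or on invariant algebraic curves. So it suffices to exhibit a \emph{single} non-real saddle fixed point off an invariant curve. For the orbit data $3,3,n$ this is done concretely: the realization with invariant cuspidal cubic $C$ and $\delta>1$ exists by Theorem \ref{thm:realizable}; since $1$ is a simple root of $\chi_n$ and $C^2<0$, the only invariant curve is $C$; the Lefschetz formula gives exactly two fixed points off $C$; an explicit formula for $\check f_n$ shows (via a discriminant computation, $B_1^2-4B_0B_2<0$) that these two fixed points are non-real, and a multiplier computation shows they are saddles for $n$ large. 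If you want to salvage your approach, you would need to replace the unproven exponential-sparseness estimate by this measure-rigidity argument, i.e.\ prove the existence of a complex saddle off $C$ rather than estimate how many real periodic points there are.
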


Our proof of this theorem relies on a different set of ideas.  On the one hand, when $h_{top}(f) > 0$, it is known from work of Bedford-Lyubich-Smillie \cite{belysm} and Cantat \cite{can1} that there is a unique (necessarily ergodic) $f$-invariant measure $\mu$ with maximal metric entropy $h_\mu(f) = h_{top}(f)$.  It is also known \cite{can2} that the support of $\mu$ includes every saddle periodic point of $f$ whose stable and unstable manifolds are not algebraic.  On the other hand, Newhouse \cite{new} showed that every real surface diffeomorphism admits \emph{some} invariant measure of maximal entropy.

To prove our second theorem we show that the given orbit data is realized by a map $f:X\to X$ with two saddle fixed points outside \cg{the set of real points $X(\R)$}.  The nature of the fixed points is established by direct computation based on an explicit formula for $f_\R$.  In joint work with Bedford \cite{bediki}, we explained how to find such a formula.  Again, our construction of $f$ produces an underlying birational map $\check f$ with a (real) invariant cubic curve.  The presence of the invariant cubic essentially rules out other invariant curves, so that in particular the stable and unstable manifolds of the complex saddle points are transcendental.  Since these fixed points must lie in the support of $\mu$, any measure of maximal entropy for $f_\R$ must differ from $\mu$.  Uniqueness of $\mu$ further implies that the entropies of $f_\R$ and $f$ differ.

\cg{
Our third result, not included in an earlier version of this paper, answers a question posed by the referee.
\begin{thmc}
There exists a rational surface automorphism $f:X\to X$ with positive entropy such that all periodic cycles of $f$ lie in the real locus $X(\R)\subset X$.
\end{thmc}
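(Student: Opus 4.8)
The example should come from the families behind Theorem \maximal{}: we build $f$ by the same recipe $\check f = T\circ J$, $T\in\aut(\cp^2)$ real, but impose enough further conditions on $T$ and on the orbit data that every periodic point of the resulting automorphism $f:X\to X$ can be located. We insist, first, that $\htop(f_\R)=\htop(f)>0$ exactly as in Theorem \maximal{}. The payoff is that the measure of maximal entropy becomes real: by Newhouse \cite{new}, $f_\R$ carries an invariant probability measure $\nu$ with $h_\nu(f_\R)=\htop(f_\R)$; regarded on $X$, $\nu$ has maximal entropy for $f$, and since $\htop(f)>0$ this measure is unique \cite{belysm,can1}, so $\nu=\mu$ and in particular $\operatorname{supp}\mu\subseteq X(\R)$. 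Second, as in the rest of the paper, all critical orbits of $\check f$ are to lie on an invariant cubic $C$, which we take to be a cuspidal cubic defined over $\R$; and we choose $T$ so that $f$ \emph{preserves} (rather than merely scales) the meromorphic two-form $\eta$ whose polar divisor is $C$. Equivalently, $f$ acts on the smooth locus of $C$ --- analytically a copy of $\C$ --- as a nontrivial real translation $z\mapsto z+b$.

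Given such an $f$, its periodic points sort into three kinds. (i) Periodic points on $C$: on the smooth locus $z\mapsto z+b$ with $b\neq 0$ has no periodic point at all, so the cusp, a real point, is the only periodic point of $f$ on $C$. (ii) Saddle periodic points $q\notin C$: such a $q$ has transcendental stable and unstable manifolds, for if $W^s(q)$ (say) lay on an algebraic curve, its Zariski closure would be an $f^k$-invariant curve through $q$, hence --- since the invariant cubic precludes other invariant, and therefore periodic, curves (the input noted already in the discussion of Theorem \nonmaximal{}) --- equal to $C$, contradicting $q\notin C$; so by Cantat \cite{can2} every such $q$ lies in $\operatorname{supp}\mu\subseteq X(\R)$ and is real. (iii) Periodic points $q\notin C$ that are not saddles: near such a $q$ the form $\eta$ is holomorphic, nonvanishing and $f$-invariant, so $f$ preserves a smooth volume on a neighborhood of $q$; this rules out sinks and sources, whose basins would be contracted to a point by $f^{\pm k}$, leaving only the case that $Df^k_q$ has both eigenvalues on the unit circle.

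Excluding such elliptic cycles off $C$ is the crux, and the one place where the soft inputs above do not suffice. The plan is to pin the member of the family down tightly enough that (a) one verifies directly, from the explicit real formula for $f_\R$ from \cite{bediki} used in proving Theorem \nonmaximal{}, that the fixed points and low-period cycles are all real saddles; and (b) one rules out higher-period elliptic cycles structurally --- for instance by noting that their multipliers are algebraic, so that a non-resonant elliptic cycle would be linearizable and would create a rotation domain, which is excluded for the chosen $f$ by the classification of Fatou components of rational surface automorphisms, while the resonant case, with eigenvalues $\lambda,\lambda^{-1}$ on the unit circle coming from an area-preserving cycle, is the genuinely delicate one and must be eliminated by direct examination of the model. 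With no non-real periodic point remaining, every periodic cycle of $f$ lies in $X(\R)$, which is Theorem \allreal{}.
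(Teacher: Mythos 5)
There is a genuine gap, and you have in effect flagged it yourself: the exclusion of non-saddle (in your setting, elliptic) periodic cycles off $C$ is exactly the point your soft inputs cannot reach, and the structural argument you offer for it does not work. Positive-entropy rational surface automorphisms \emph{can} have rotation domains --- McMullen's examples in \cite{mcm} are built to have Siegel disks --- so there is no classification of Fatou components that lets you discard a linearizable elliptic cycle, and checking ``fixed points and low-period cycles'' from the explicit formula can never cover all periods at once. Your framework also makes the problem harder rather than easier: requiring $f^*\eta=\eta$ forces determinant $\delta=1$, which is precisely the case excluded from the realization machinery (Theorem \ref{thm:realizable} needs $\delta\neq\pm1$), so no positive-entropy example of the required kind is actually produced; moreover a real area-preserving map is exactly the situation where elliptic islands are expected, and with $\delta=1$ the cusp becomes a degenerate fixed point with multipliers $1,1$. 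Note finally that the support-of-$\mu$ argument is intrinsically limited to saddles: the actual example in the paper has an attracting fixed point (the cusp) and a repelling fixed point off $C$, and such points never lie in $\operatorname{supp}\mu$, so some device beyond uniqueness of the maximal entropy measure is unavoidable if one wants \emph{all} cycles, of \emph{all} types and periods, to be real.

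The missing idea is a counting argument. The paper takes the orbit data $2,4,5,\id$ with $\delta\approx1.28>1$ (so maximal entropy holds as in Theorem A, and your first step, $\operatorname{supp}\mu\subset X(\R)$, is consistent with it), locates the four fixed points explicitly, rules out pointwise-fixed curves using the genus formula and the transformation rule $f^*\omega=\delta\omega$, and then applies the Lefschetz fixed point formula twice: once on $X$ to count periodic points of $f^n$ with multiplicity, and once on the orientation double cover $\hat\pi:\hat X\to X(\R)$ (needed because $X(\R)$ is non-orientable) to count real periodic points with signs, using that the characteristic polynomial of $\hat f_*$ on $H_1(\hat X;\R)$ is $\chi_\R(t)^2$. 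Because $\chi$ and $\chi_\R$ agree up to cyclotomic factors, the two counts match for all $n$ divisible by $4$, which forces every complex $n$-periodic point to be real (and, apart from the attracting and repelling fixed points, a saddle). That global bookkeeping, valid for every period simultaneously, is what replaces your case (b), and it is the step your proposal lacks.
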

We prove this for the automorphism $f$ associated to orbit data $2,4,5,\id$.  As in the cases considered in Theorem { \maximal}, the restriction $f_\R:X(\R)\to X(\R)$ of $f$ has maximum possible homology growth and therefore maximal entropy.  The general strategy is to compare the counts of real and complex periodic points of $f$ by applying the Lefschetz fixed point formula.  The count of real points is more complicated, because $X(\R)$ is non-orientable, which leads us to work on the orientation double cover $\hat\pi:\hat X \to X(\R)$ rather than directly on $X(\R)$.
}

Sections \ref{sec:homology} and \ref{sec:basic} of this paper present some necessary background about entropy and homology and then about the quadratic birational maps and associated surface automorphisms of interest here.  Section \ref{sec:action} shows how to compute the action $\cg{(f_\R)_*}:H_1(X(\R);\R)\to H_1(X(\R);\R)$ for such an automorphism when it preserves a cuspidal cubic $C$.  The proof of Theorem {\maximal } is given here.  Section \ref{sec:saddle} gives the proof of Theorem \nonmaximal, and Section \ref{sec:reallyreal} contains the proof of Theorem \allreal.

\chg{Clearly there are many interesting questions left open.  Not only is the set of automorphisms we consider very limited, even the underlying real surfaces we allow are rather restricted, omitting not only irrational surfaces but even many rational surfaces with a natural real structure, e.g. surfaces obtained by blowing up pairwise conugate complex points in $\cp^2(\C)$.  We note in particular some interesting work of Moncet \cite{mon1}, involving a very different set of ideas, that allows him to compare real vs. complex entropy for real automorphisms of various irrational surfaces.}  

There is more to be understand even for the examples we consider here.  It is natural, for instance, to seek a more detailed description of the dynamics of $f_\R$ when $\rho((f_\R)_*) = \rho(f_*)$.  And in the other direction, we would like to know whether both inequalities in the chain $\log\rho((f_\R)_*) \leq h_{top}(f_\R) \leq \log\rho(f_*)$ must be strict whenever one of them is.  We discuss these and other interesting open problems at greater length in the concluding Section \ref{sec:conclusion}.  The appendix gives an exhaustive summary of our computation of $(f_\R)_*$ for arbitrary sets of orbit data.

\qu{We thank the referee for an especially thorough reading and many constructive suggestions for improvements.  They have greatly improved this paper.  We thank Eric Bedford for offering his ideas about counting periodic points for maximal entropy automorphisms. These were essential to us for proving Theorem { \allreal}.}

\section{Entropy and homology}
\label{sec:homology}

In this section and the next, we give some necessary background.  \qu{It is standard to quantify the dynamical complexity of a continuous map $F:M\to M$ on a compact metric space $M$ in terms of its topological entropy $\htop(F)$.  Readers unfamiliar with entropy can consult \cite{kaha} for a precise definition and a thorough development.  For our purposes, however, it will suffice to recall some important and deep connections between the entropy and homology.  The first is due to Yomdin \cite{yom} (see \cite{man} for the case of first homology).}

\begin{thm}
\label{thm:yomdin}
\cg{Let $F:M\to M$ be a $C^\infty$-smooth self-map of a compact connected differentiable manifold.} Let $\rho(F_*)$ denote the spectral radius of the pushforward action $F_*:H_*(M;\R)\to H_*(M;\R)$ on the total real homology of $M$.  Then
$$
\htop(F) \geq \log \rho(F_*),
$$
\end{thm}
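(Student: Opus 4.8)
The plan is to route the inequality through the exponential growth rate of Riemannian volume under iteration, following the volume-growth approach of Yomdin and Gromov. Fix a Riemannian metric on $M$; for a smooth singular $i$-simplex $\sigma\colon\Delta^i\to M$ write $\mathrm{vol}_i(\sigma)=\int_{\Delta^i}|\mathrm{Jac}\,\sigma|$ for its $i$-dimensional volume counted with multiplicity, and put
\[
v_i(F):=\limsup_{n\to\infty}\frac1n\log\,\sup_\sigma\mathrm{vol}_i(F^n\circ\sigma),\qquad v(F):=\max_{0\le i\le\dim M}v_i(F),
\]
where the inner supremum runs over smooth $i$-simplices $\sigma$ of bounded size (say $\|D\sigma\|\le1$), which keeps it finite. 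With this notation the theorem reduces to the two assertions
\[
\htop(F)\ \ge\ \log v(F)\qquad\text{and}\qquad \log v(F)\ \ge\ \log\rho(F_*).
\]

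The second assertion is elementary. For each degree $i$ fix smooth singular cycles representing a basis of $H_i(M;\R)$, and let $\sigma_1,\dots,\sigma_N$ be the finitely many simplices occurring in them (over all $i$); after subdividing we may assume each $\sigma_j$ has bounded size. Since the spectral radius of a linear operator is dominated by its operator norm, for every $n$ some basis class $\alpha\in H_i(M;\R)$ satisfies $\|F^n_*\alpha\|\ge c_i\,\rho(F_*|_{H_i})^n$ for a fixed $c_i>0$. Pushing the chosen representative $Z$ of $\alpha$ forward by $F^n$ produces a cycle representing $F^n_*\alpha$ that is a combination, with the same coefficients as $Z$, of the chains $F^n\circ\sigma_j$; hence its mass is at most a constant times $\max_j\mathrm{vol}_i(F^n\circ\sigma_j)$. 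On the other hand the mass of any cycle bounds a fixed positive multiple of the homology norm of its class from below: pairing with closed $i$-forms $\omega$ of comass at most $1$ gives $\mathrm{mass}(Z)\ge|\langle[\omega],[Z]\rangle|$, and by de Rham duality the supremum of the right-hand side over such $\omega$ is a norm on the finite-dimensional space $H_i(M;\R)$, hence comparable to any reference norm. Chaining these bounds, $\sup_\sigma\mathrm{vol}_i(F^n\circ\sigma)$ grows at least like $\rho(F_*|_{H_i})^n$, so $v_i(F)\ge\log\rho(F_*|_{H_i})$, and maximizing over $i$ gives $\log v(F)\ge\log\rho(F_*)$.

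The first assertion, $\htop(F)\ge\log v(F)$, is the deep one, and here I would invoke Yomdin's theorem \cite{yom} rather than reconstruct its proof. Its content is that exponential volume growth of $F^n\circ\sigma$ cannot be produced cheaply; the technical heart is Yomdin's reparametrization (algebraic) lemma, asserting that the image of a cube under a $C^r$ map of controlled complexity can be covered by a bounded number of $C^r$ reparametrizations with derivatives at most $1$, at the cost of a factor $e^{C n\dim M/r}$ with $C$ universal. Because $F$ is $C^\infty$ one lets $r\to\infty$ so that this error disappears, and the surviving volume growth is converted into exponentially many $(n,\varepsilon)$-separated orbits, i.e.\ entropy at least $\log v(F)$. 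For the first-homology case alone---the one actually needed in this paper, where $(f_\R)_*$ acts on $H_1(X(\R);\R)$---one may instead cite Manning \cite{man}, whose argument sidesteps real-algebraic geometry by lifting $F$ to the maximal free abelian cover of $M$ and weighing the exponential growth of homological displacement against the polynomial ($\sim n^{\,b_1(M)}$) growth of the rank-$b_1(M)$ free abelian deck group.

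The main obstacle is thus confined to this last step. The reduction to volume growth and the bound $\log v(F)\ge\log\rho(F_*)$ are routine, whereas $\htop(F)\ge\log v(F)$ rests on the quantitative semialgebraic geometry underlying the reparametrization lemma; this is also precisely where the $C^\infty$ hypothesis enters, since in class $C^r$ one obtains only $\htop(F)\ge\log v(F)-O\!\left((\dim M/r)\log^+\|DF\|_\infty\right)$, and smoothness is needed to make the error term vanish.
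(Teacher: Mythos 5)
The paper gives no proof of this statement---it is quoted as a background theorem with precisely the citations you invoke (Yomdin, and Manning for first homology)---so your outline, namely the routine bound of homological growth by volume growth of pushed-forward simplices together with Yomdin's theorem for $h_{top}(F)\geq$ volume growth, is correct and essentially coincides with the paper's treatment. One cosmetic slip: since your $v_i(F)$ already has the logarithm built into its definition, the two assertions should read $h_{top}(F)\ge v(F)$ and $v(F)\ge\log\rho(F_*)$, not $\log v(F)\ge\log\rho(F_*)$.
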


When $M$ is a real surface and $F$ is a diffeomorphism, we have that $F_* = \id$ on $H_0(M;\R)$ and $H_2(M;\R)$.  Hence $\rho(F_*) = \rho(F_*|_{H_1(M;\R)})$.  

The following complementary result of Gromov \cite{gro} tells us that \qu{in the K\"ahler setting} the inequality in Theorem \ref{thm:yomdin} is actually an equality.

\begin{thm}
\label{thm:gromov}
Let $F:M\to M$ be a holomorphic self-map of a compact K\"ahler manifold.  Then 
$$
h_{top}(F) \leq \log\max_k \rho(F_*|_{H_{2k}(M;\R)}). 
$$
\end{thm}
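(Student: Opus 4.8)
The plan is to carry out Gromov's original argument: bound $h_{top}(F)$ by the exponential growth rate of the $2m$-dimensional volumes of the iterated graphs of $F$, and then evaluate those volumes cohomologically. Fix a Kähler form $\omega$ on $M$ representing $[\omega]\in H^2(M;\R)$, set $m=\dim_\C M$, and for each $n\ge 1$ let $\Gamma_n\subset M^n=M\times\cdots\times M$ be the graph of $x\mapsto(x,Fx,\dots,F^{n-1}x)$; this is a compact complex submanifold of complex dimension $m$, and I equip $M^n$ with the product Kähler form $\Omega=\sum_{i=1}^n\pi_i^*\omega$ and the associated product Riemannian metric.

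\emph{Step 1: entropy versus graph volume.} First I would prove
$$
h_{top}(F)\ \le\ \limsup_{n\to\infty}\frac1n\log\operatorname{vol}_{2m}(\Gamma_n).
$$
Given an $(n,\epsilon)$-separated set $E\subset M$ for the Bowen metric $d_n(x,y)=\max_{0\le i<n}d(F^ix,F^iy)$, its lift $\widehat E=\{(x,Fx,\dots,F^{n-1}x):x\in E\}\subset\Gamma_n$ is $\epsilon$-separated in $M^n$ for the product Riemannian metric, since that metric dominates the $\ell^\infty$ metric whose restriction to graph points realizes $d_n$. Hence the ambient balls $B_{M^n}(p,\epsilon/2)$ centered at points of $\widehat E$ are pairwise disjoint. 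Because $\Gamma_n$ is a complex, hence minimal, submanifold of $M^n$, and the geometry of $M^n$ (two-sided curvature bounds and a lower injectivity-radius bound) is controlled uniformly in $n$, the monotonicity formula yields $\operatorname{vol}_{2m}\!\big(\Gamma_n\cap B_{M^n}(p,\epsilon/2)\big)\ge c\,\epsilon^{2m}$ with $c$ independent of $n$ and $p$. Summing over $\widehat E$ and comparing with $\operatorname{vol}_{2m}(\Gamma_n)$ bounds $\#E$ by $c^{-1}\epsilon^{-2m}\operatorname{vol}_{2m}(\Gamma_n)$; applying $\frac1n\log$, then $\limsup_n$, then $\epsilon\to0$ gives the displayed inequality.

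\emph{Step 2: cohomological evaluation.} By Wirtinger's theorem $\operatorname{vol}_{2m}(\Gamma_n)=\frac1{m!}\int_{\Gamma_n}\Omega^m$. Expanding $\Omega^m$ by the multinomial theorem and transporting everything to $M$ via the biholomorphism given by the first projection $\Gamma_n\to M$ (under which $\pi_i$ becomes $F^{i-1}$), each term becomes an intersection number
$$
\int_{\Gamma_n}\pi_{i_1}^*\omega\wedge\cdots\wedge\pi_{i_m}^*\omega=\big\langle(F^*)^{i_1-1}[\omega]\smile\cdots\smile(F^*)^{i_m-1}[\omega],\,[M]\big\rangle ,
$$
so $\operatorname{vol}_{2m}(\Gamma_n)$ is a sum of at most $n^m$ such numbers weighted by bounded multinomial coefficients. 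Write $D:=\max_k\rho(F^*|_{H^{2k}(M;\R)})$, which equals $\max_k\rho(F_*|_{H_{2k}(M;\R)})$ because $F_*$ on $H_{2k}$ is the transpose of $F^*$ on $H^{2k}$. The crux is the estimate: for every $\epsilon>0$ there is $C_\epsilon$ with
$$
\big|\big\langle(F^*)^{j_1}[\omega]\smile\cdots\smile(F^*)^{j_m}[\omega],\,[M]\big\rangle\big|\ \le\ C_\epsilon\,(D+\epsilon)^{\,\max_\ell j_\ell},\qquad 0\le j_\ell\le n-1 .
$$
Granting this, $\operatorname{vol}_{2m}(\Gamma_n)\le(\text{const})\,n^m\,C_\epsilon(D+\epsilon)^{\,n-1}$, so the growth rate in Step 1 is $\le\log(D+\epsilon)$; letting $\epsilon\to0$ and combining with Step 1 gives $h_{top}(F)\le\log D=\log\max_k\rho(F_*|_{H_{2k}(M;\R)})$, as asserted.

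\emph{The estimate, and the main obstacle.} The displayed estimate I would prove by induction on $m$: factor the lowest power of $F^*$ out of the cup product (legitimate since $F^*$ is a graded ring homomorphism), then move a suitable power of $F$ across the pairing with $[M]$ using the projection formula, trading a factor $\deg(F)^{j}$ and a pushforward $(F_*)^{j}$ for a cup product of strictly fewer copies of $[\omega]$; the spectral-radius estimates $\|(F^*)^j\|,\|(F_*)^j\|\le C_\epsilon(D+\epsilon)^j$ on each graded piece, together with submultiplicativity of the cup product, then close the induction. I expect the main difficulty to lie precisely in this bookkeeping — arranging the factorings and pushforwards so that the spectral radii attached to the various cohomological degrees combine without any term outgrowing $(D+\epsilon)^n$, and so that the ranges of the iteration exponents $j_\ell$ interlock correctly at each step. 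The geometric input of Step 1 (the monotonicity formula and the uniform-in-$n$ control of the geometry of $M^n$) is standard, but it is the place where smoothness and the Kähler hypothesis are genuinely used, so it too warrants care.
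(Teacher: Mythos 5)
The paper does not prove this statement at all: it is quoted as background and attributed to Gromov \cite{gro}, so the only question is whether your reconstruction of Gromov's argument is sound. Your Step 1 (lifting $(n,\epsilon)$-separated sets to the graph $\Gamma_n$, using that the product metric dominates the $\ell^\infty$ metric, and getting the lower volume bound $c\,\epsilon^{2m}$ per ball from minimality/monotonicity of the complex submanifold $\Gamma_n$, with geometry of $M^n$ uniformly controlled in $n$) is exactly Gromov's and is fine, as is the Wirtinger computation reducing $\mathrm{vol}(\Gamma_n)$ to the mixed intersection numbers $\bigl\langle (F^*)^{j_1}[\omega]\smile\cdots\smile(F^*)^{j_m}[\omega],[M]\bigr\rangle$.

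The gap is in the proof of your key estimate, which is the crux. The induction you sketch does not close: factoring out the lowest power of $F^*$ leaves the \emph{same} number $m$ of copies of $[\omega]$ (not ``strictly fewer''), and moving $(F^{j_1})_*$ across the pairing with $[M]$ only produces the factor $\deg(F)^{j_1}$; after that, estimating the remaining cup product by submultiplicativity of norms on the graded pieces gives a bound of order $(D+\epsilon)^{j_1+\cdots+j_m}$, hence only $h_{top}(F)\le m\log D$, strictly weaker than the theorem whenever $D>1$. The correct mechanism is a \emph{nested, telescoping} use of the fact that $F^*$ is a ring homomorphism: ordering $j_1\le\cdots\le j_m$ one has
$$
(F^*)^{j_1}[\omega]\smile\cdots\smile(F^*)^{j_m}[\omega]
=(F^*)^{j_1}\Bigl([\omega]\smile(F^*)^{j_2-j_1}\bigl([\omega]\smile\cdots\smile(F^*)^{j_m-j_{m-1}}[\omega]\bigr)\Bigr),
$$
and estimating from the inside out, each power of $F^*$ acts on a \emph{fixed} group $H^{2k}(M;\mathbb{R})$ with operator norm at most $C_\epsilon(D+\epsilon)^{\text{power}}$ (note $\rho(F^*|_{H^{2m}})=\deg F\le D$), so the exponents telescope to $(j_m-j_{m-1})+\cdots+(j_2-j_1)+j_1=\max_\ell j_\ell$ and you obtain exactly your displayed bound with a constant depending only on $m$ and $\epsilon$. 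With that replacement your argument is a complete proof along Gromov's lines; as written, the crux step would fail.
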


\noindent When $M$ is a \qu{compact K\"ahler surface (i.e. $\dim_\C M = 2$)} and $F$ is an automorphism, we have that $F_* = \id$ on $H_0(M;\R)$ and $H_4(M;\R)$.  Hence $\rho(F_*) = \rho(F_*|_{H_2(M;\R)})$; i.e. it suffices to consider only $k=1$ in Theorem \ref{thm:gromov}.

As explained in the introduction, we are interested in $M$ equal to either $X$ or $X(\R)$, where $\pi:X\to\cp^2$ is the blow up of finitely many distinct real points $p_1,\dots ,p_N\in \cg{\cp^2(\R)} \subset \cp^2$, and 
$$
X(\R) := \overline{\pi^{-1}(\cg{\cp^2(\R)}\setminus\{p_1,\dots,p_N\})}
$$ is the set of real points of $X$.  In this case $H_2(X;\Z)$ (and therefore also $H_2(X;\R)$) is generated by the homology classes of the exceptional curves $E_i := \pi^{-1}(p_i)$ together with the class of $\pi^{-1}(L)$, where $L\subset \cp^2$ is any line disjoint from the points $p_i$.

Since each $p_i \in\cg{\cp^2(\R)}$, we have that $E_i\cap X(\R)$ is a smooth circle $e_i$, the projectivization of the real tangent space $T_{p_i} X_{j-1,\R}$ at $p_i$.  Again the homology group \chg{$H_1(X(\R);\Z) = \Z^N \oplus (\Z/2\Z)$} is generated by the circles $e_i$ and the class of a \cg{generic} real line $\ell$.  Indeed the $e_i$ generate the free part of the homology and therefore all of $H_1(X(\R);\R)$.  However, in contrast with the complex situation in which both $X$ and the exceptional curves $E_i$, carry canonical orientations, the real surface $X(\R)$ is non-orientable, and there is no natural way to orient the exceptional circles $e_i$ within $X(\R)$.  \qu{Moreover, different choices of generic real line $\ell$ can give rise to different homology classes (see Proposition \ref{prop:classofrealline} below)}.  

We impose the further condition, whose purpose becomes clearer in the next section, that all $p_j$ are regular points on the cuspidal cubic curve 
\begin{equation}
\label{eqn:cuspcubic}
C := \{[x,y,z]\in\cp^2: yz^2 = x^3\}.
\end{equation}  
If we identify $\{z=0\}$ with the line at infinity in $\cp^2$ and let $(x,y)\in\cg{\mathbb{A}^2}\mapsto [x,y,1]$ denote affine coordinates on the complement, then the cusp $[0,1,0]$ is the unique point at infinity for $C$, and the regular part $C_{reg}:=C\cap\cg{\mathbb{A}^2}$ of $C$ is parametrized by $\gamma(x) = (x,x^3)$.  We identify $C$ and its real slice $\qu{C(\R) := C\cap \cp^2(\R)}$ with their strict transforms in $X$ and $X(\R)$, respectively.  The parametrization $\gamma$ provides an orientation for $\qu{C(\R)}$, and it is convenient to give all circles $e_j$ the `clockwise' orientation relative to affine coordinates $(x,y)$.  

Let $\ell\subset \cg{\cp^2(\R)}$ be a real line in $\cg{\cp^2(\R)}$, and \cg{suppose $\ell$ is not the line at infinity}.  If $\ell$ is vertical we orient $\ell$ in the downward direction, and if not, we orient $\ell$ from left to right.  As with complex curves in $\cp^2$, we implicitly identify the real curve $\ell$ with its strict transform $\overline{\ell\setminus (p_j)_{\{1\leq j \leq N\}}}$ in $X(\R)$.  
As a curve in $\cg{\cp^2(\R)}$, the line $\ell$ meets $C$ in either $1$ or $3$ points, counting with multiplicity.
The next fact gives us a very convenient description of the homology class of $\ell$ in $X(\R)$ in terms of the locations of these intersections.  We will say that $p\prec q$ for two points \qu{$p,q\in C(\R)$} if $p = \gamma(x),q = \gamma(x')$ with $x<x'$.  

\begin{figure}
\centering
\def\svgwidth{6.5truein}
  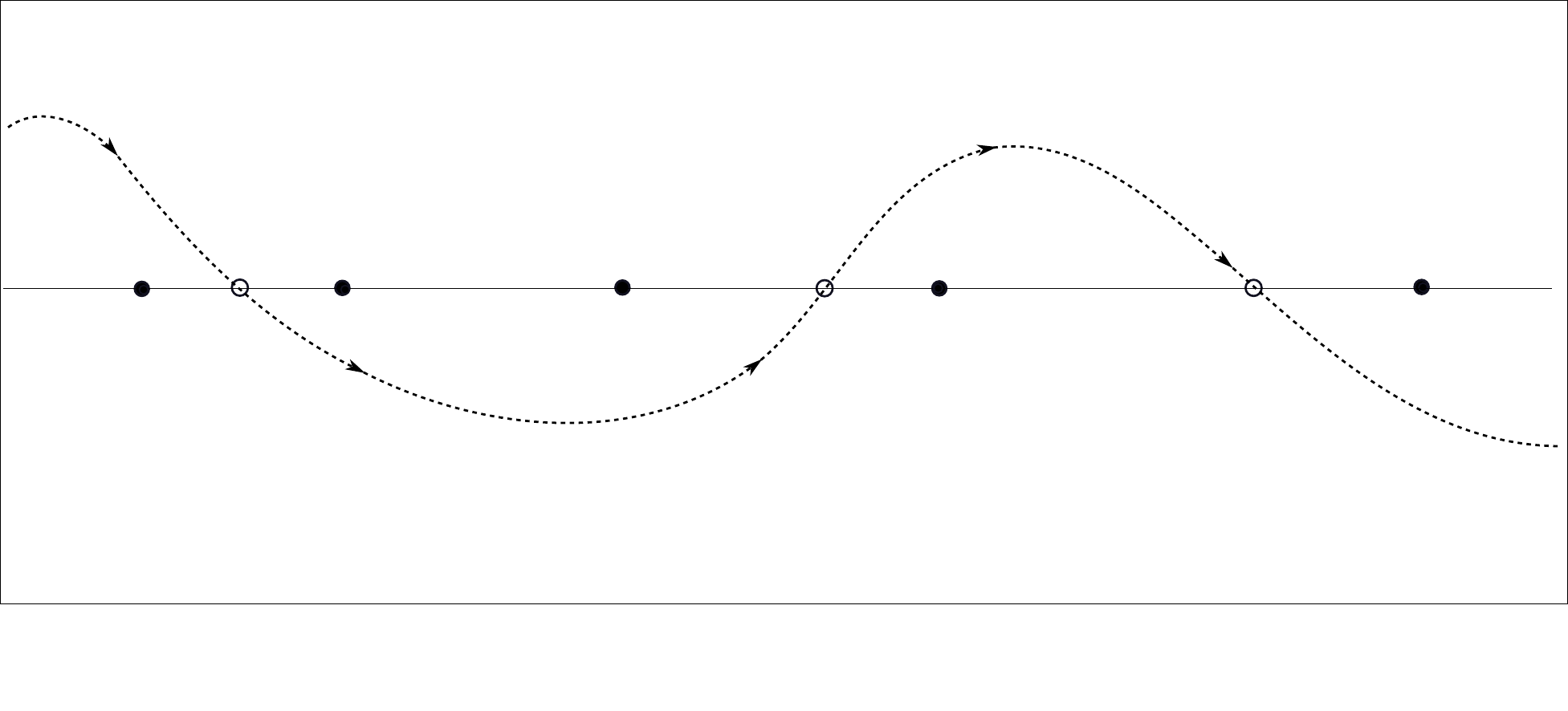
\caption{Homology class of a finite real line. The center horizontal line represents the cuspidal cubic $C$ and the dashed curve represents a real line $\ell$ that meets $C$ in three distinct points. Filled circles are points blown up to create the rational surface $X$. \label{fig:homologyLine}}
\end{figure}

\begin{prop}
\label{prop:classofrealline}
Let $\ell\subset\cg{\cp^2(\R)}$ be any real line which is distinct from the line at infinity.  The homology class of $\ell$ in $H_1(X(\R);\Q)$ is given by 
$$
\ell \sim  \sum_{p_j\notin \ell} (-1)^{n_j} e_j,
$$
where $n_j = \#\{p \in \ell\cap C: p\prec p_j\}$.
\end{prop}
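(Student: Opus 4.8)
\emph{Plan.} Since $H_1(X(\R);\Q)$ is spanned freely by $e_1,\dots,e_N$ (by the discussion preceding the Proposition), it is enough to pin down the $e_j$-coefficient of the class of $\ell$. I would first reformulate the right-hand side. For a non-vertical affine line $\ell=\{y=mx+b\}$ and a blown-up point $p_j=\gamma(x_j)\notin\ell$, the integer $n_j$ is the number, counted with multiplicity, of real roots of the monic cubic $Q(x)=x^3-mx-b$ that are less than $x_j$; since $Q(x)<0$ for $x\ll 0$ and $Q(x_j)\neq 0$, the number of sign changes of $Q$ on $(-\infty,x_j)$ has the same parity as $n_j$, so $\operatorname{sign}Q(x_j)=(-1)^{\,n_j+1}$ and hence
$$
(-1)^{n_j}=\operatorname{sign}\bigl(mx_j+b-x_j^3\bigr),
$$
which is $+1$ exactly when $p_j$ lies strictly below $\ell$ and $-1$ when it lies strictly above. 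Thus for non-vertical $\ell$ the assertion is equivalent to $\ell\sim\sum_{p_j\notin\ell}\epsilon_j e_j$, where $\epsilon_j=\pm1$ records whether $p_j$ is below or above $\ell$; I will prove this and then reach vertical lines by a limiting argument.

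\emph{The core argument.} The set $V$ of non-vertical affine lines is parametrized by slope and intercept, hence is $\cong\R^2$ and connected, and inside $V$ the only relevant walls are the loci $H_j=\{\ell:p_j\in\ell\}$. Both maps $\ell\mapsto[\ell]$ and $\ell\mapsto\sum_{p_j\notin\ell}\epsilon_j e_j$ are locally constant on $V\setminus\bigcup_j H_j$ — the first because $\widetilde\ell=\pi^{-1}(\ell)$ varies by ambient isotopy so long as $\ell$ avoids the blown-up points, the second because the $\epsilon_j$ are locally constant there. So it suffices to compare how the two sides jump as $\ell$ is pushed transversally through a point $p_j$. For $\sum\epsilon_j e_j$ this is immediate: the $e_i$ with $i\neq j$ are untouched, and the $e_j$-coefficient equals $+1$ on the side of $H_j$ where $p_j$ lies below $\ell$ and $-1$ on the side where it lies above. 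On the topological side, a computation in a blow-up chart at $p_j$ shows that for a family $\ell_t$ with $\ell_0\ni p_j$ the strict transforms $\widetilde\ell_t$ degenerate to the \emph{total} transform $\widetilde\ell_0\cup e_j$, with $e_j$ acquiring its clockwise orientation precisely from the side on which $p_j$ lies below $\ell_t$ (and the opposite orientation from the other side); hence the $e_j$-coefficient of $[\ell]$ also increases by exactly $1$ as $p_j$ passes from just above $\ell$ to just below it, the other $e_i$-coefficients being unaffected. Since the two sides undergo the identical jump across every $H_j$, the difference $D(\ell):=[\ell]-\sum_{p_j\notin\ell}\epsilon_j e_j$ is continuous across every wall, hence constant on $V$; call its value $D_0\in H_1(X(\R);\Q)$.

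\emph{Finishing.} To see $D_0=0$, take a path in the space of all real lines that crosses exactly one vertical line (chosen not to pass through any $p_j$) and otherwise stays in $V$ except at that one point. The convention orienting $\ell$ from left to right — and vertical lines downward — has a genuine discontinuity at vertical lines, so $[\ell]$ reverses sign across this crossing; and as the slope passes through $\infty$ every $p_j$ switches from below $\ell$ to above it or vice versa, so $\sum\epsilon_j e_j$ also reverses sign. Thus $D$ reverses sign along the path, while it equals $D_0$ at both endpoints (which lie in $V$); therefore $D_0=-D_0$, forcing $D_0=0$. This establishes the identity for every non-vertical line, lines through one or more of the $p_j$ included. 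A vertical line $\{x=c\}$ with $c\neq x_j$ for all $j$ is a limit of non-vertical lines of large negative slope, along which $\ell\cap C$ has a single real point tending to the unique finite real point of $\{x=c\}\cap C$ and along which both sides of the original identity vary continuously, so the identity persists in the limit; and a vertical line through some $p_j$, like any line through a single $p_j$, realizes the average over its two adjacent chambers of both sides. (The line at infinity, which passes through the cusp of $C$, is excluded from the statement.) This proves the Proposition.

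\emph{The main obstacle} is the orientation bookkeeping in the middle step: one must carry along simultaneously the clockwise orientation of the $e_j$, the leftward/downward orientation of $\ell$, and the orientation of $C$ furnished by $\gamma$ (which is what enters through the sign of $mx_j+b-x_j^3$, i.e. through $\prec$), and verify that the orientation with which $e_j$ appears in $\lim\widetilde\ell_t$ is \emph{exactly} the one dictated by that sign rather than its opposite — otherwise the correct exponent would be $n_j+1$. This is precisely where the invariant cubic is indispensable: $C$, through $\gamma$, supplies the canonical ordering that removes the sign ambiguity intrinsic to $H_1$ of the non-orientable surface $X(\R)$. The remaining ingredients — local constancy on chambers, connectivity of $V$, the vertical-line sign reversal, and the $D_0=-D_0$ device, which makes any explicit base computation unnecessary — are routine.
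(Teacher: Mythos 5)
Your overall scheme (reduce to the above/below formulation via the sign of $x^3-mx-b$, show both sides are locally constant on chambers of the space of non-vertical lines, match the jumps across the walls $H_j$, and then kill the constant discrepancy $D_0$ by the sign-reversal of both sides across a vertical line, so $D_0=-D_0=0$ in a $\Q$-vector space) is a legitimate and genuinely different route from the paper, which instead bounds $\ell_\infty-\ell$ by an explicit $2$-chain (the region above $\ell$ with disks around the blown-up points removed) and uses the relation $\ell_\infty\sim\sum_j e_j$. The reformulation step and the final $D_0=-D_0$ trick are correct. However, there is a genuine gap at the one step that carries all the content: the jump of $[\tilde\ell]$ across a wall. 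You assert that "a computation in a blow-up chart at $p_j$ shows" that $e_j$ is picked up with its clockwise orientation from the side where $p_j$ lies below $\ell$, but you never perform this computation, and you yourself flag it as "the main obstacle" where a sign error would change the exponent to $n_j+1$. Worse, the one quantitative statement you do make is wrong: you say the $e_j$-coefficient of $[\ell]$ "increases by exactly $1$" as $p_j$ passes from above to below, while by your own computation the right-hand side jumps from $-1$ to $+1$, i.e.\ by $2$. Taken literally this destroys the argument, since the two sides would then \emph{not} undergo identical jumps and $D$ would not be constant on $V$.

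The correct local fact, which your proof must establish and does not, is that for a family $\ell_t$ sweeping past $p_j$ one has $[\tilde\ell_{t}]-[\tilde\ell_{-t}]=2e_j$ (equivalently $[\tilde\ell_{\pm t}]=[\tilde\ell_0]\pm e_j$), with the sign pinned down against the clockwise orientation of $e_j$ and the left-to-right orientation of $\ell$. This is exactly the Möbius-band phenomenon the paper's proof isolates: the lift of a small circle $u_j$ about $p_j$ is the boundary of a tubular neighborhood of $e_j$ and hence homologous to $2e_j$, and it is this factor of $2$, together with the orientation bookkeeping, that makes the coefficient flip between $+1$ and $-1$ rather than change by $1$. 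Your vertical-line device does successfully eliminate the need for the paper's base-case relation $\ell_\infty\sim\sum_j e_j$, but it cannot substitute for this local computation; as written, the crux of the proposition is asserted rather than proved. (Minor point: your closing claim that the invariant cubic is "indispensable" here overstates its role in this proposition — it only supplies the ordering $\prec$ in the statement; both your argument and the paper's reduce immediately to the above/below dichotomy, which needs no cubic.)
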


Figure \ref{fig:homologyLine} illustrates this proposition in a caricatured style we will use throughout this article. For emphasis and the sake of keeping things visually separated, we will always draw the cubic $C(\R)$ as horizontal line in affine space $\mathbf{A}^2(\R)$.  Any real line different from the line at infinity will appear as a dashed curve intersecting $C(\R)$ at various points.  \chg{In Figure \ref{fig:homologyLine}, $\ell\cap C$ consists of three (real) points.  None of these precedes $p_1$, one precedes $p_2$ and $p_3$, two precede $p_4$, and all three precede $p_5$.  Hence $\ell  \sim  e_1-e_2-e_3+e_4-e_5$. }

\begin{proof}
First note that any vertical line in $\ell\subset\R^2$ is homotopic in $X(\R)$ to nearby lines $\ell'$ with negative slope and the same single point of intersection with $C$.  So we can assume without loss of generality that $\ell$ is not vertical. Next observe that removing intersections with $\ell$ decompose the regular (i.e. finite) part of \qu{$C(\R)$} into two or four connected components (the second or third will be empty if $\ell$ is tangent to \qu{$C(\R)$} at some point), the leftmost below $\ell$, the second above $\ell$ and so on.  From these observations, one sees that the proposition is equivalent to showing that
\begin{equation}\label{eqn:abovebelow}
\ell \sim \sum_{p_i \text{ below } \ell} e_i - \sum_{p_j \text{ above } \ell} e_j.
\end{equation}
To establish this last statement, \chg{assume for the moment that $\ell$ does not contain any of the points $p_j$}.  Let $\ell_\infty\subset\cg{\cp^2(\R)}$ denote the line at infinity, oriented so that the `upper' connected component $U^+\subset \mathbb{A}^2(\R)\setminus\ell$ has boundary $\partial U^+ = \ell_\infty -\ell$.  \cg{For each $p_j \in U^+$, let $ u_j \subset U^+$ be a small clockwise circle centered at $p_j$. Let $\tilde U^+$ denote the region obtained from $U^+$ by removing the disks bounded by the $u_j$.  Then $\partial \tilde U^+ = \ell_\infty - \ell - \sum_{p_j\in U^+} u_j$.

After we blow up all the $p_j$, the open set $U^+$ lifts to an open set $\tilde U$ containing exceptional circles $e_j$ in place of the points $p_j$, and each circle $u_j$ is homotopic to $2e_j$. It follows that $\partial \tilde U^+ \sim \ell_\infty - \ell - 2\sum_{p_j \in U^+} e_j \sim 0$.  }
Therefore in $X(\R)$, we have that
$$
\ell_\infty - \ell \sim 2\sum_{p_j \in U^+} e_j
$$
We also have that $2\ell_\infty$ is null-homotopic in $\cg{\cp^2(\R)}$.  Hence $2\ell_\infty$ is homotopic to $2\sum e_j$ in $X(\R)$.  Putting these relations together proves \eqref{eqn:abovebelow}.

\chg{
If $\ell$ contains a point $p_j$, one argues similarly. The only difference is that, $\ell$ bisects the circle $u_j$.  Hence one of the connected components of $\partial \tilde U^+$ consists of $\ell$ and the \emph{upper} half of $b_j$, both with coefficient $-1$.  As the radius of $b_j$ tends to zero, the upper half of $b_j$ shrinks in $X(\R)$ to a \emph{single} copy of $e_j$.  Hence in the limit 
$$
0\sim \partial \tilde U^+ \sim \ell_\infty - \ell - \sum_{p_j \in U^+} 2e_j - \sum_{p_j\in \ell} e_j.
$$
Using the further relation $\ell_\infty \sim \sum e_j$ to eliminate $\ell_\infty$ also eliminates all $e_j$ in the second sum.}
\end{proof}

\section{Automorphisms from quadratic birational maps}
\label{sec:basic}

As indicated in the introduction, our focus will be on automorphisms $f:X\to X$ on blowups $\pi:X\to\cp^2$ associated to certain quadratic birational maps of $\cp^2$.  Such automorphisms are explored at length in \cite{dil} and \cite{bediki}.  Let us summarize the notation and results that we need from those articles.

Let $\check f:\cp^2\tto\cp^2$ be a quadratic plane birational map, \qu{non-degenerate in the sense that it is indeterminate at three distinct points in $\cp^2$}.  That is,
\begin{equation}
\label{basicmap}
\check f = T^-\circ J \circ (T^+)^{-1},
\end{equation}
where $T^\pm\in\aut(\cp^2)$ are linear and $J$ denotes the \cg{ standard quadratic }involution given in affine coordinates by $(x,y)\mapsto (x^{-1},y^{-1})$.  \chg{Such $f$ are Zariski dense in the set of all quadratic birational maps.}

The map $J$ acts on the triangle with vertices $\ve_1 := [1,0,0], \ve_2:=[0,1,0], \ve_3:= [0,0,1]$ by contracting each side to the opposing vertex.  \chg{However, $J$ is indeterminate (i.e. not continuously defined) at the vertices themselves, instead blowing up each} to the opposite side of the triangle.  Away from the triangle, $J$ is a diffeomorphism.  Now let $p_i^\pm := T^\pm(\ve_i)$ and $E_i^\pm$ denote the vertices and opposite sides, respectively, of the images of the critical triangle for $J$ by $T^+$ and $T^-$.  Then $E_i^+$ is critical for $\check f$ with $\check f(E_i^+) = p_i^-$, and $p_i^+$ is indeterminate for $\check f$ with $\check f(p_i^+) = E_i^-$.

Note that here and elsewhere, we employ the convention that for any curve $V\subset\cp^2$, the image $\check f(V) := \overline{\check f(V\setminus \{p_1^+,p_2^+,p_3^+\})}$ omits the images of any indeterminate points on $V$; i.e. $\check f(V)$ is the set theoretic `strict transform' of $V$.

We will always assume that the linear maps $T^\pm$ defining $\check f$ are real, i.e. that they restrict to automorphisms of $\cg{\cp^2(\R)}$.  For purposes of this paper, we call such an $\check f$ a \emph{basic real map}.  Note that $\check f$ can be linearly conjugated to a map of the form $T\circ J$ (where $T= (T^+)^{-1}\circ T^-$) used in the introduction.  

Let $C$ be the cuspidal cubic curve \qu{defined by \eqref{eqn:cuspcubic} above}.  We will say that a birational map $f$ \emph{properly fixes} $C$ if $\check f(C) = C$ and none of the indeterminate points $p_i^\pm$ are the cusp of $C$.  Our interest in $C$ begins with the fact that there are many basic real maps that properly fix $C$, and that there is an easy way to characterize them.

\begin{prop}[\cg{\cite[Theorem~1.3]{dil}}]
Let $g:\cp^2\tto\cp^2$ be a \qu{quadratic} birational map.  Then $g$ properly fixes $C$ if and only if all indeterminate points of $g$ and $g^{-1}$ are contained in $C_{reg}$.  
\end{prop}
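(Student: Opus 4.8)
The plan is to exploit the group law on $C_{reg}$. Under the parametrization $\gamma$ the smooth locus $C_{reg}$ becomes a copy of the additive group $(\mathbb{C},+)$ in which $\gamma(t_1),\gamma(t_2),\gamma(t_3)$ are collinear exactly when $t_1+t_2+t_3=0$: a line $y=\alpha x+\beta$ cuts $C$ where $t^{3}-\alpha t-\beta=0$, a monic cubic with no quadratic term, so its roots sum to zero. I would also use the standard transformation rule for a non-degenerate quadratic Cremona map $\check g$: a curve of degree $d$ through the indeterminate points $p_1^+,p_2^+,p_3^+$ with multiplicities $m_1,m_2,m_3$ has image of degree $2d-m_1-m_2-m_3$, passing through $p_i^-$ with multiplicity $d-m_j-m_k$ for $\{i,j,k\}=\{1,2,3\}$; recall too that $\check g$ contracts the line $E_i^+=\overline{p_j^+p_k^+}$ to $p_i^-$.

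The forward implication is the easy half. If $g$ properly fixes $C$, then applying the transformation rule to $C$ and using $\deg\check g(C)=\deg C=3$ gives $m_1+m_2+m_3=3$; since the $p_i^+$ are distinct and the cusp $\kappa$ is the only singular point of $C$, this forces each $m_i=1$, so each $p_i^+$ is a smooth point of $C$. Thus $p_i^+\in C_{reg}$ unless some $p_i^+=\kappa$, which the word \emph{properly} excludes. Taking strict transforms gives $\check g^{-1}(C)=C$, and the same argument for $g^{-1}$ puts $p_1^-,p_2^-,p_3^-$ in $C_{reg}$.

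For the converse, assume all $p_i^\pm\in C_{reg}$. The transformation rule with $m_1=m_2=m_3=1$ shows $\check g(C)$ is a cubic, irreducible because it is a birational image of $C$. Moreover $\kappa$ is a regular point of $\check g$: it is not indeterminate, since the indeterminate points lie in $C_{reg}$, and it lies on no contracted line $E_i^+$, because a line through two smooth points of $C$ meets $C$ with multiplicity at least $4$ if it also passes through $\kappa$. Hence $\check g$ is a local biholomorphism near $\kappa$ and carries the cusp of $C$ to a cusp of $\check g(C)$; an irreducible cubic with a cusp is a cuspidal cubic, so $\check g(C)$ is one. To finish I would show $\check g(C)=C$ by B\'ezout. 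Writing $p_i^+=\gamma(a_i)$ and $p_i^-=\gamma(b_i)$, the collinearity rule identifies the third point of $E_i^+\cap C$ as $\gamma(-a_j-a_k)$, and $\check g$ carries it to $p_i^-$, so $\check g(C)$ passes through $p_1^-,p_2^-,p_3^-$, which lie on $C$ by hypothesis. Away from its indeterminate points, $\check g|_C$ descends to an automorphism of the normalization $\mathbb{P}^1$ fixing the point over the cusp, hence to an affine map $t\mapsto\delta t+\tau$ of $C_{reg}\cong\mathbb{C}$; the relations $\check g(\gamma(-a_j-a_k))=\gamma(b_i)$ then pin down $\delta,\tau$ and give $b_i=-\delta(a_j+a_k)+\tau$. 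With this, $C$ and $\check g(C)$ are cuspidal cubics meeting at the three points $p_i^-$, at the three points where $\check g(C)$ crosses the blown-up lines $E_i^-$ (which one checks also lie on $C$), and---counting multiplicities at the two cusps---in still more points, so their intersection exceeds the B\'ezout bound of $9$; hence $\check g(C)=C$, and the fixing is automatically proper since $\kappa\notin C_{reg}$.

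The hard part will be this last step: verifying that $C\cap\check g(C)$ has more than $9$ points counted with multiplicity. This needs careful local analysis of $\check g$ near the blown-up indeterminate points $p_i^+$ and near $\kappa$, to confirm that the three points on the $E_i^-$ really lie on $C$ and that the cusps contribute the claimed multiplicities. Alternatively one could avoid B\'ezout by pushing the group-law computation further---writing $\check g|_C$ down explicitly from $(\delta,\tau)$ and checking directly that its image is $C$. The degree count and the fact that $\kappa$ is a smooth point of $\check g$ are routine.
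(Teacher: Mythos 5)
The statement here is imported verbatim from \cite[Theorem~1.3]{dil}; the paper gives no proof of it, so your argument has to stand on its own. Your ``only if'' half does: it is the expected count. From $\check g(C)=C$ the quadratic transformation rule gives $m_1+m_2+m_3=3$ at the indeterminate points, properness caps each $m_i$ at $1$ (multiplicity $2$ occurs only at the cusp), hence $m_i=1$ for all $i$, and likewise for $g^{-1}$. (You do tacitly assume $g$ is non-degenerate, i.e.\ has three distinct proper indeterminate points; the Proposition as stated allows infinitely near base points, where the same count needs the blown-up formulation, but this is a side issue.)

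The genuine gap is in the converse, and it is not merely that the B\'ezout count is left unfinished: nothing you actually establish about $\check g(C)$ --- that it is an irreducible cuspidal cubic passing through $p_1^-,p_2^-,p_3^-$ --- can force $\check g(C)=C$, and no argument that uses the hypothesis only through those consequences can close the gap. Indeed, take any quadratic $g_0$ properly fixing $C$ and any nontrivial linear $T\in\aut(\cp^2)$ fixing the three points of $I(g_0^{-1})$ pointwise: these $T$ form a two-dimensional torus, whereas the linear stabilizer of $C$ is the one-parameter group $\gamma(t)\mapsto\gamma(\lambda t)$, which fixes no such triple of distinct points of $C_{reg}$ except trivially, so every nontrivial such $T$ moves $C$. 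Then $g=T\circ g_0$ is a quadratic map all of whose indeterminate points, and all of whose inverse's indeterminate points, lie in $C_{reg}$, yet $\check g(C)=T(C)$ is a cuspidal cubic through the same three points $p_i^-$ which is different from $C$. In this family precisely your two unverified claims fail: the points where $\check g(C)$ meets the lines $E_i^-$ are $T$ of points of $C$ and generically not on $C$, and the cusp of $\check g(C)$ is $T$ of the cusp, so the two cusps do not coincide and contribute no excess intersection. So those steps cannot be ``checked'' as routine; any correct treatment of this direction must use the precise positions of the $p_i^-$ relative to the $p_i^+$ (the determinant/group-law relations on $C_{reg}$, as in Theorem \ref{thm:realizable} and in the analysis of \cite{dil}), not merely the fact that all six points lie in $C_{reg}$ --- and, read literally, the example above indicates you should consult the exact hypotheses of \cite[Theorem~1.3]{dil}, since the present paper only ever uses the ``only if'' half downstream. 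As written, your outline for the ``if'' direction cannot be completed.
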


When $\check f$ properly fixes $C$, it necessarily fixes the cusp of $C$ and therefore also the complement $C_{reg} = \gamma(\C)$.  Thus $\check f$ restricts to an automorphism $f_C:C_{reg}\to C_{reg}$ given by $f_C(\gamma(x)) = \gamma(\delta x + \tau)$ for constants $\delta \neq 0$ and $\tau$.  When $\check f$ is real, then of course $\delta,\tau\in \R$ and \qu{$\check f(C(\R)) = C(\R)$}.  Following \cite{mcm}, we call $\delta$ the \emph{determinant} of $\check f$.  This is because \chg{$\check f^*\omega = \delta\omega$} where $\omega$ is the unique (up to constant multiple) meromorphic two form on $X$ with divisor equal to $-C$.

\begin{prop}
\label{prop:realizable}
Suppose $\check f$ properly fixes $C$ and that there exist integers $n_1,n_2,n_3 >0$ and a permutation $\sigma\in \Sigma_3$ such that 
\begin{itemize}
\item for all $1\leq j< n_i$, the point $p_{i,j} := \check f^j(E_i^+) = f_C^{j-1}(p_i^-)$ is not indeterminate for $\check f$;
\item $p_{i,n_i} := \check f^{n_i}(E_i^+) = p_{\sigma (i)}^+$;
\end{itemize}
Then $\check f$ lifts to an automorphism $f:X\to X$, where $\pi:X\to\cp^2$ is the blowup of $\cp^2$ at the points $p_{i,j} := \check f^j(E_i^+)\in C$, $1\leq k \leq n_j$, $j=1,2,3$.
\end{prop}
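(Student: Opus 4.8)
The plan is to realize the lift $f:=\pi^{-1}\circ\check f\circ\pi:X\tto X$ as an honest \emph{morphism} $X\to X$. That already suffices: a birational morphism between smooth projective surfaces of equal Picard rank factors as a finite composition of blow-downs of $(-1)$-curves, and since the source and target both have Picard rank $1+N$, where $N:=n_1+n_2+n_3$, there can be no blow-downs at all; such a morphism is therefore an isomorphism, hence an automorphism of $X$. So the whole task is to rule out indeterminacy of $f$.

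First, though, I would check that the $N$ points $p_{i,j}$, $1\le j\le n_i$, $i=1,2,3$, are pairwise distinct, so that $\pi$ is honestly the blowup of $N$ distinct points. Since $\check f$ properly fixes $C$, all of them lie in $C_{reg}$, on which $\check f$ acts by the injective affine map $f_C$, and the three initial points $p_i^-=p_{i,1}=T^-(\ve_i)$ are distinct. A coincidence $p_{i,j}=p_{i',j'}$ with $i\ne i'$ would, after applying $f_C^{-1}$ enough times, force the forward $\check f$-orbit of $E_{i'}^+$ to merge with that of $E_i^+$; following the merged orbit to the first index at which it reaches an indeterminacy point $p_1^+$, $p_2^+$, or $p_3^+$ of $\check f$, and using that $\sigma$ is a bijection, one is forced to conclude $i=i'$. (The degenerate cases --- e.g. some $n_i=1$, so that $p_{i,1}=p_{\sigma(i)}^+$ is itself indeterminate --- have to be treated by hand.) The same reasoning shows that for $1\le j<n_i$ none of the $p_{i,j}$ lies on a contracted line $E_k^+$, since otherwise $\check f(p_{i,j})$ would be the already-blown-up point $p_k^-=p_{k,1}$; in particular $\check f$ is a local biholomorphism at each such $p_{i,j}$.

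Next I would build $f$ in stages. Put $Z:=\mathrm{Bl}_{\{p_1^+,p_2^+,p_3^+\}}\cp^2$ with structure map $\rho:Z\to\cp^2$. By the classical structure of non-degenerate quadratic Cremona maps, $\check f$ lifts through $\rho$ to a morphism $\hat f:Z\to\cp^2$ which is itself the blowup of the target $\cp^2$ at the three distinct points $p_1^-,p_2^-,p_3^-$. Since each $p_l^+=p_{\sigma^{-1}(l),\,n_{\sigma^{-1}(l)}}$ is one of the (distinct) centers of $\pi$, there is a birational morphism $\mu:X\to Z$ with $\rho\circ\mu=\pi$, namely the blowup of $Z$ at the lifts of the remaining centers $p_{i,j}$, $j<n_i$. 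Hence the rational map $\check f\circ\pi:X\tto\cp^2$ coincides with the morphism $\hat f\circ\mu$, so it is a morphism; call it $g$. By the universal property of the blowup $\pi$, the composite $f=\pi^{-1}\circ g$ is again a morphism as soon as $g^{-1}(\{p_{i,j}\})$ is an effective Cartier divisor on $X$, and this is exactly where distinctness pays off: over $p_{i,1}=p_i^-$ one gets $\mu^{-1}(D_i)$, where $D_i:=\hat f^{-1}(p_i^-)\subset Z$ is the $\hat f$-exceptional divisor over $p_i^-$; and for $2\le j\le n_i$ the fiber $\hat f^{-1}(p_{i,j})$ is a single point --- since $p_{i,j}\notin\{p_1^-,p_2^-,p_3^-\}$, again by distinctness --- namely the point that $\mu$ blows up to $\mathcal E_{i,j-1}:=\pi^{-1}(p_{i,j-1})$, because $\check f^{-1}(p_{i,j})=p_{i,j-1}$ is a center of $\pi$ but not of $\rho$; so $g^{-1}(p_{i,j})=\mathcal E_{i,j-1}$. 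Both pieces are divisors, so $g$ factors through $\pi$, the lift $f:X\to X$ is a morphism, and the opening reduction finishes the proof.

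I expect the genuine obstacle to be the distinctness of the $N$ base points: unlike the rest of the argument, which is largely formal manipulation of universal properties, this needs a real orbit-combinatorial argument and is fussiest precisely in the degenerate cases (short orbits, or some $p_i^-$ coinciding with an indeterminacy point of $\check f$). A secondary delicate point is the Cartier check in the previous paragraph, which is in effect the statement that no infinitely-near points intervene --- that $\pi$ blows up honest points of $\cp^2$ and that the single blowups performed by $\hat f$ and by $\mu$ at each relevant point are enough. If one prefers to avoid $Z$ entirely, one can instead argue directly in affine charts, verifying that $f$ carries $\widetilde{E}_i^{+}$ isomorphically onto $\mathcal E_{i,1}$, each $\mathcal E_{i,j}$ isomorphically onto $\mathcal E_{i,j+1}$ for $j<n_i$, and $\mathcal E_{i,n_i}$ isomorphically onto the strict transform of $E_{\sigma(i)}^-$; the content is identical, namely that each contraction $\check f(E_i^+)=p_i^-$ is undone by the blowup at $p_i^-$ and each expansion $\check f(p_{\sigma(i)}^+)=E_{\sigma(i)}^-$ is undone by the blowup at $p_{\sigma(i)}^+=p_{i,n_i}$.
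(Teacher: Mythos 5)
Your argument is correct, but note that the paper itself gives no proof of this proposition: it is quoted as background summarizing \cite{dil} and \cite{bediki}, with only the remark that the hypotheses force the centers $p_{i,j}$ to be distinct. So there is no internal proof to compare with, and your write-up supplies what the paper imports. The route you take---resolve $\check f$ on the intermediate surface $Z$ obtained by blowing up $p_1^+,p_2^+,p_3^+$, factor $\pi$ through $Z$, verify that the inverse image ideals of the centers are invertible so that the universal property of the blowup produces a birational morphism $f:X\to X$, and then use equality of Picard ranks to conclude $f$ is an automorphism---is a clean, formal packaging; the argument standard in the cited sources is essentially the chart-level alternative you sketch at the end, namely checking that the lift carries (the strict transform of) $E_i^+$ isomorphically onto the exceptional curve over $p_{i,1}$, each exceptional curve over $p_{i,j}$ onto the one over $p_{i,j+1}$, and the curve over $p_{i,n_i}$ onto the strict transform of $E^-_{\sigma(i)}$. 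You also correctly isolate the real content, the distinctness of the $n_1+n_2+n_3$ centers: your device of pushing a coincidence forward with $f_C$ until one orbit reaches its terminal indeterminacy point, then invoking that $p^+_{\sigma(1)},p^+_{\sigma(2)},p^+_{\sigma(3)}$ are distinct, works verbatim for coincidences within a single orbit as well, so no separate case is needed there; and distinctness in turn rules out centers on the contracted lines and makes each fiber $\hat f^{-1}(p_{i,j})$, $j\geq 2$, the single point of $Z$ over $p_{i,j-1}$, exactly as you use. What each approach buys: yours absorbs all blow-up bookkeeping into universal properties plus the Picard-rank count for birational morphisms of smooth surfaces, while the chart computation has the side benefit of exhibiting explicitly how $f_*$ permutes the exceptional classes, which is the description \eqref{h2action} that the paper needs immediately afterwards.
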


Note that the hypotheses of the proposition imply that the points $p_{i,j}$ are all distinct.
We refer to the positive integers $n_1,n_2,n_3$ and permutation $\sigma$ collectively as \emph{orbit data} and say that the map $\check f$ in Proposition \ref{prop:realizable} realizes this orbit data.  From the orbit data, one easily determines the action $f_*:H_2(X;\R) \to H_2(X;\R)$.  Specifically, if $E_{i,j}\subset X$ denotes the exceptional curve obtained by blowing up the point $p_{i,j}$, then $f_*$ acts by 
\begin{equation}
\label{h2action}
E^+_i \mapsto E_{i,1}\mapsto\dots\mapsto E_{i,n_i} \mapsto E^-_{\sigma(i)},\quad{and}\quad L\mapsto 2L - \sum_{i=1}^3 E_{i,1},
\end{equation}
where $L$ is (the homology class in $X$ of) a general line in $\cp^2$.  This suffices for describing $f_*$ 
\qu{because of the further homology relation $E_i^- \sim L - \sum_{j\neq i} E_{j,1}$.  This last relation expresses (for $f^{-1}$) the fact that each exceptional line of a basic real map contains two of the map's three indeterminate points}.

Because the operator $f_*$ depends only on the orbit data realized by $\check f$, it makes sense to talk about the characteristic polynomial associated to any given orbit data regardless of whether or not it is realized by some basic map.  The characteristic polynomial \cite{difa} has at most two, necessarily real, roots $\delta>1>\delta^{-1}$ outside the unit circle. If these roots exist, then $n_1+n_2+n_3 \geq 10$.

\begin{thm}[\cite{mcm,dil}]
\label{thm:realizable}
Let $\check f$ be a basic real map satisfying the hypothesis of Proposition \ref{prop:realizable}.  Then the determinant $\delta$ of $\check f$ is a real root of the characteristic polynomial of $f_*:H_2(X,\R)\to H_2(X,\R)$.  For real $\delta \neq \pm 1$, there is an affine change of parameter $\tilde \gamma(t) = \gamma(\alpha t + \beta$) such that $p_{fix}:=\tilde\gamma(0) \in C\cap \cg{\mathbb{A}^2(\R)}$ is \chg{the unique fixed point of $f_C$ different from the cusp}, and the indeterminate points of $\check f^{-1}$ are given by
$
p_i^- = \tilde\gamma(t_i) \prec p_{fix},
$
where
\begin{itemize}
 \item $t_i = \frac1{1-\delta^{n_i}}$ if $\sigma(i) = i$;
 \item $t_i = \frac{1+\delta^{n_j}}{1-\delta^{n_i+n_j}}$ if $\sigma$ exchanges $i$ and $j$;
 \item $t_i = \frac{1+\delta^{n_k}+\delta^{n_j+n_k}}{1-\delta^{n_i+n_j+n_k}}$ if $\sigma:i\mapsto j\mapsto k$ is cyclic;
\end{itemize}

Conversely, given orbit data $n_1,n_2,n_3,\sigma$ and a real root $\delta \neq \pm 1$ of the associated characteristic polynomial, suppose that $t_i$ are given by the above formula.  If the parameters $\{\delta^jt_i:1\leq i \leq 3, 0\leq j \leq n_i-1 \}$ are all distinct, then there is a basic real map, unique up to linear conjugacy, that \cg{properly fixes $C$} and realizes the orbit data $n_1,n_2,n_3,\sigma$.
\end{thm}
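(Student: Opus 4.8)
\medskip
\noindent\emph{Proof strategy.}
The plan is to push $\check f$ through the additive parametrization $\gamma$ of $C_{reg}$ and reduce the whole statement to linear algebra in the parameter, recovering the analysis of \cite{dil,mcm}. For the forward direction I would start from a basic real map $\check f$ that properly fixes $C$ and realizes orbit data $n_1,n_2,n_3,\sigma$ as in Proposition~\ref{prop:realizable}. As recalled above, $\check f$ then restricts to $f_C(\gamma(x))=\gamma(\delta x+\tau)$ with $\delta,\tau\in\R$, and since $\delta\neq\pm1$ the affine map $x\mapsto\delta x+\tau$ has a unique fixed point $x_0\in\R$. After reparametrizing by $\tilde\gamma(t):=\gamma(\alpha t+x_0)$, with $\alpha\in\R$ a scale still to be normalized, I may assume $\fp=\tilde\gamma(0)$ and $f_C(\tilde\gamma(t))=\tilde\gamma(\delta t)$; since three finite points of $C$ are collinear exactly when their $\gamma$-parameters sum to zero (Vieta applied to $x^3-mx-b=0$), in the new coordinate this becomes: collinear iff the $t$-parameters sum to a fixed constant $c$. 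Writing the indeterminacy points of $\check f$ and of $\check f^{-1}$ as $p_i^+=\tilde\gamma(u_i)$ and $p_i^-=\tilde\gamma(t_i)$, the geometry of a non-degenerate quadratic map yields two families of collinearity relations: the exceptional line $E_i^+$ of $\check f$ passes through the other two indeterminacy points $p_j^+,p_k^+$ and meets $C$ in a third point which, being contracted by $\check f$ to $p_i^-$ and lying on $C$, must equal $f_C^{-1}(p_i^-)$; symmetrically, $E_i^-$ passes through $p_j^-,p_k^-$ and meets $C$ at $f_C(p_i^+)$. In the $t$-parameter these say $u_j+u_k+\delta^{-1}t_i=c$ and $t_j+t_k+\delta u_i=c$ for $\{i,j,k\}=\{1,2,3\}$. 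Finally, iterating $\check f$ along $C$ gives $\check f^{m}(E_i^+)=f_C^{m-1}(p_i^-)$ for $1\le m\le n_i$, so the orbit condition $\check f^{n_i}(E_i^+)=p_{\sigma(i)}^+$ becomes $u_{\sigma(i)}=\delta^{n_i-1}t_i$.

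Next I would use the orbit relations to eliminate the $u_i$, leaving an overdetermined linear system for $t_1,t_2,t_3$ (and $c$) with coefficients polynomial in $\delta$. Solving this separately in the three cases for the cycle type of $\sigma$ — the identity, a single transposition, a $3$-cycle — is a short computation that produces exactly the three closed forms for $t_i$ displayed in the statement, while the condition for the overdetermined system to be consistent is, after clearing denominators, the vanishing at $\delta$ of (a factor of) the characteristic polynomial associated to the orbit data, whose explicit form is available from \cite{difa}. Since $\delta\in\R$, this exhibits it as a real root. (Alternatively the root property can be read off from $\check f^*\omega=\delta\omega$ for the anticanonical two-form $\omega$, as in \cite{mcm}, but it is the linear-algebra computation that simultaneously yields the formulas for $t_i$.)

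For the converse I would run this in reverse. Given the orbit data, a real root $\delta\neq\pm1$ of the associated characteristic polynomial, and the $t_i$ prescribed by the formulas, set $u_{\sigma(i)}:=\delta^{n_i-1}t_i$; using that $\delta$ is a root, one checks that these parameters satisfy the structural relations above — the two collinearity families, for a common constant $c$, together with the normalization expressing that the fixed point of $f_C$ sits at $\tilde\gamma(0)$. By the existence/uniqueness result for quadratic maps properly fixing $C$ from \cite{dil} — such a map is determined, up to the one-parameter group $\aut(\cp^2,C)$, which acts on $C_{reg}$ by rescaling the parameter, by its determinant together with its three indeterminacy points on $C_{reg}$, and can be taken real when these data are real — I obtain a basic real map $\check f$ properly fixing $C$, unique up to linear conjugacy, with $f_C(\tilde\gamma(t))=\tilde\gamma(\delta t)$, indeterminacy points $\tilde\gamma(u_i)$, and exceptional-line images $p_i^-=\tilde\gamma(t_i)$. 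It remains to verify that this $\check f$ realizes the prescribed orbit data: its orbit points are $\check f^{m}(E_i^+)=\tilde\gamma(\delta^{m-1}t_i)$, so the hypothesis that the parameters $\{\delta^{j}t_i:1\le i\le3,\ 0\le j\le n_i-1\}$ are distinct says precisely that $\check f^m(E_i^+)$ avoids the indeterminacy points $\tilde\gamma(u_1),\tilde\gamma(u_2),\tilde\gamma(u_3)$ for $m<n_i$, while $\check f^{n_i}(E_i^+)=\tilde\gamma(\delta^{n_i-1}t_i)=\tilde\gamma(u_{\sigma(i)})=p_{\sigma(i)}^+$. Thus the hypotheses of Proposition~\ref{prop:realizable} hold, $\check f$ lifts to the desired automorphism, and uniqueness up to linear conjugacy follows from the construction.

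I expect the main obstacle to be the computation in the second paragraph: carrying out the case analysis over the cycle type of $\sigma$ and, above all, recognizing the resulting consistency condition as (a factor of) the characteristic polynomial associated to the orbit data, which requires the explicit factored form of that polynomial and careful bookkeeping of which parameters enter which relation once $\sigma$ permutes the indices nontrivially. A lesser but necessary point is to confirm that all the reparametrizations and the map $\check f$ produced in the converse can be chosen over $\R$, and that the distinctness hypothesis is exactly what is needed to keep the three critical orbits disjoint both from one another and from the indeterminacy locus.
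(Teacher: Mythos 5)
You should know at the outset that the paper does not prove Theorem \ref{thm:realizable} at all: it is imported wholesale from \cite{mcm,dil}, so there is no internal argument to compare your proposal against. What you have written is essentially a reconstruction of the derivation in those references (especially \cite{dil}): parametrize $C_{reg}$ by $\gamma(x)=(x,x^3)$, use that three finite points of $C$ are collinear exactly when their parameters sum to zero, encode the contraction/blow-up structure of a basic map in the two collinearity families $u_j+u_k+\delta^{-1}t_i=c$ and $t_j+t_k+\delta u_i=c$, and impose the orbit relations $u_{\sigma(i)}=\delta^{n_i-1}t_i$. These relations are correct, and the plan does work: for instance, in the cyclic case one checks that the displayed $t_i$ satisfy one family identically (with $c$ a symmetric expression in $\delta^{n_1},\delta^{n_2},\delta^{n_3}$), while the other family reduces, after clearing denominators, precisely to the vanishing of \eqref{eqn:cycliccharpoly}; this is also consistent with how the paper itself uses the formulas in Lemma \ref{lem:order}.

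The points that still need care are the ones you partly flag. First, the identification of the consistency condition of the overdetermined system with (a factor of) the characteristic polynomial is the real computation and must be carried out separately for each cycle type of $\sigma$; until that is done the forward assertion ``$\delta$ is a root of the characteristic polynomial of $f_*$'' is not yet established by your argument (the alternative you mention, reading it off from $\check f^*\omega=\delta\omega$, needs the equivariant restriction to $\operatorname{Pic}(C)$ as in \cite{mcm} and is not automatic as stated). Second, since the linear system is homogeneous and the scale $\alpha$ of the reparametrization is the only freedom, you should also argue that at a root $\delta$ the solution space is one-dimensional (or at least that the parameter vector of the actual map is proportional to the displayed one); otherwise the specific normalization in the statement need not be attainable, and the ordering claim $p_i^-\prec \fp$, which is a sign condition on $\alpha$ and the $t_i$, is left unaddressed. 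Third, the converse leans entirely on the nontrivial existence-and-uniqueness statement from \cite{dil} that a quadratic map properly fixing $C$ is determined by its determinant together with its three indeterminacy points on $C_{reg}$, and can be taken real for real data; citing it is legitimate here, but be aware that this is where the substance of the converse lives, and that matching the prescribed parameters $(u_i,t_i,c)$ to the fixed point and collinearity constant of the map so produced requires the small renormalization argument you only gesture at. None of these is a wrong turn; they are the bookkeeping that \cite{dil} and \cite{difa} supply.
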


We note regarding the second half of this theorem that it is difficult to tell in general when the parameters of interest are distinct.  This issue is discussed at length in \cite{mcm} and \cite{dil}. \qu{In fact, even when the parameters are not distinct, one can find a (possibly degenerate) quadratic birational map that properly fixes $C$ with the correct determinant $\delta$ and whose indeterminate points have forward $f_C$ orbits described by the parameter values $t_i$ in Theorem \ref{thm:realizable}.  For instance, if $n_1 = n_2 = n_3 = 4$, one obtains a quadratic birational map $\check f$ for which all three indeterminate points coincide and map under $\check f^4$ to the lone (triple) indeterminate point for $\check f^{-1}$. The map is in fact independent of the permutation $\sigma$.  However, in a strict sense elaborated more fully in the discussion around Theorems 3.5 and 3.6 in \cite{dil}, this map correctly realizes the orbit data only for the case $\sigma = \id$.}

\section{Homology growth for real automorphisms}
\label{sec:action}

Let $\check f:\cp^2\to\cp^2$ be a basic real map that properly fixes the cuspidal cubic curve $C$ and realizes the orbit data $\sigma,n_1,n_2,n_3$ with determinant $\delta>1$.  Then by restriction we have an associated diffeomorphism $f_\R:X(\R)\to X(\R)$ of the real slice of $X$.  Our goal in this section is to give an effective means to work out the pushforward action $(f_\R)_*:H_1(X(\R);\R)\to H_1(X(\R);\R)$.

We begin by making an observation that, while not needed in the remainder of the section, is interesting in its own right and will be useful to us in Section \ref{sec:reallyreal} below.  Recall that a polynomial $P(t)$ is reciprocal if $P(t) = t^{\deg P} P(1/t)$.  If $P(t)$ is the characteristic polynomial of an invertible linear operator $T$, then $t^{\deg P}P(1/t)$ is the characteristic polynomial of $T^{-1}$.  Hence $P$ is reciprocal if $T^{-1}$ is conjugate to $T$.

\begin{thm}
\label{thm:reciprocal}
The characteristic polynomial of $(f_\R)_*$ is reciprocal, equal to that of $(f_\R^{-1})_*$.
\end{thm}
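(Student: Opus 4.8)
The plan is to exhibit an explicit conjugacy between $(f_\R)_*$ and $((f_\R)^{-1})_*$, or failing that, to show directly that $(f_\R)_*$ carries an invariant nondegenerate bilinear form, which forces its characteristic polynomial to be reciprocal. The natural candidate for a preserved structure is the intersection pairing on $H_1(X(\R);\R)$. Since $X(\R)$ is a compact surface (though non-orientable), there is still a $\Z/2$-valued intersection form, but over $\R$ or $\Q$ we want a genuine bilinear pairing; the cleanest route is to pass to the orientation double cover $\hat\pi:\hat X\to X(\R)$ already mentioned in the introduction (in connection with Theorem C), on which $f_\R$ lifts to $\hat f$, and use that the intersection form on $H_1(\hat X;\R)$ is nondegenerate (Poincar\'e duality for the closed oriented surface $\hat X$) and $\hat f_*$-invariant up to sign. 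Any linear automorphism preserving a nondegenerate bilinear form (symmetric or skew) has reciprocal characteristic polynomial, since the matrix is conjugate to (the inverse transpose of) itself via the Gram matrix. One must then relate the characteristic polynomial of $\hat f_*$ on $H_1(\hat X;\R)$ to that of $(f_\R)_*$ on $H_1(X(\R);\R)$, using the transfer/pushforward decomposition $H_1(\hat X;\Q) = \hat\pi^*H_1(X(\R);\Q)\oplus(\text{anti-invariant part})$; the $+1$-eigenspace of the deck involution recovers $H_1(X(\R);\Q)$ up to the torsion that $\Q$-coefficients kill, and $\hat f_*$ respects this splitting.

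Alternatively — and this may be shorter given the machinery already in place — I would exploit the invariant cubic directly. By Theorem \ref{thm:realizable} the map $\check f$ and $\check f^{-1}$ both properly fix $C$, with determinants $\delta$ and $\delta^{-1}$ respectively, and both satisfy symmetric hypotheses (the orbit data for $\check f^{-1}$ is $n_1,n_2,n_3,\sigma^{-1}$, realized with determinant $\delta^{-1}$, whose associated characteristic polynomial on $H_2$ is the same reciprocal polynomial). One can then try to build an explicit linear conjugacy $X(\R)\to X(\R)$ — or rather a birational conjugacy at the level of $\cp^2$ fixing $C$ — intertwining $\check f$ and $\check f^{-1}$. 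Concretely: $J$ is an involution, so $\check f^{-1} = T^+\circ J\circ (T^-)^{-1}$, which is again a basic real map of the same shape with the roles of $p_i^+$ and $p_i^-$ swapped. Since the surface $X$ for $f^{-1}$ is built by blowing up the same points (the orbit of $\check f^{-1}$ through the $p_i^-$'s is exactly the orbit of $\check f$ through the $p_i^+$'s, traversed backwards), $X$ is literally the same surface, $f^{-1}:X\to X$ is the inverse automorphism, and restricting to real points gives $(f_\R)^{-1}$. So $((f_\R)^{-1})_* = ((f_\R)_*)^{-1}$ on $H_1(X(\R);\R)$, and the content of the theorem is precisely that this inverse has the same characteristic polynomial as $(f_\R)_*$ itself — i.e. that $(f_\R)_*$ is conjugate to its inverse.

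To get that conjugacy I would look for a symmetry of the configuration. The parameter formulas in Theorem \ref{thm:realizable} for the points $p_i^- = \tilde\gamma(t_i)$ are, under $\delta\mapsto\delta^{-1}$, related to the corresponding parameters for $\check f^{-1}$ by a fractional-linear transformation of $t$ (one checks, e.g. in the $\sigma(i)=i$ case, that $\frac1{1-\delta^{-n_i}} = 1 - \frac1{1-\delta^{n_i}}$, so $t\mapsto 1-t$ does it; the exchanged and cyclic cases should yield similarly simple substitutions after clearing denominators). A fractional-linear change of the affine parameter on $C_{reg}$ is induced by a linear automorphism of $\cp^2$ preserving $C$, hence extends to $X$; reversing the orientation of $C(\R)$ corresponds to $t\mapsto -t$ or a reflection, which is exactly the operation that swaps "above $\ell$" with "below $\ell$" in Proposition \ref{prop:classofrealline} and therefore acts on $H_1(X(\R);\R)$ in a controlled way. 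Combining the reparametrization with this reflection should produce the desired conjugacy between $(f_\R)_*$ and its inverse.

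The main obstacle I anticipate is the bookkeeping in the second approach: verifying that the fractional-linear reparametrization sending $\check f$'s critical-orbit parameters to $\check f^{-1}$'s really is the same transformation in all three cases of $\sigma$ (fixed point, transposition, 3-cycle), and then tracking precisely how the induced automorphism of $X$ acts on the generators $e_1,\dots,e_N$ of $H_1(X(\R);\Q)$ — in particular pinning down the signs, since reversing the orientation of $C(\R)$ interacts with the sign conventions built into Proposition \ref{prop:classofrealline}. If that turns out to be delicate, the double-cover argument of the first paragraph is a cleaner fallback, at the cost of having to set up the transfer sequence for $\hat\pi$ and check $\hat f_*$-equivariance of the splitting; but that setup is needed for Theorem C anyway, so either way the cost is modest.
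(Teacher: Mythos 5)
Neither of your two routes closes, and the gap in each is concrete. For the ``cleaner fallback'' via the orientation double cover: the deck involution $\iota$ of $\hat\pi:\hat X\to X(\R)$ is orientation-reversing, so it \emph{negates} the intersection form, $\langle \iota_*\alpha,\iota_*\beta\rangle=-\langle\alpha,\beta\rangle$. Consequently the $+1$-eigenspace $H_+\subset H_1(\hat X;\R)$ --- exactly the subspace that the transfer identifies with $H_1(X(\R);\R)$ and on which $\hat f_*$ acts as $(f_\R)_*$ --- is \emph{isotropic}: the form restricts to zero there, so the symplectic structure preserved by $\hat f_*$ imposes no condition at all on $\hat f_*|_{H_+}$. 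What the preserved form does give is that $H_+$ and $H_-$ are dual isotropic subspaces, so $\hat f_*|_{H_-}$ is conjugate to $(\hat f_*|_{H_+})^{-1}$ (this is Lemma \ref{lem:doublecharpoly}); writing $\chi_\R$ for the characteristic polynomial of $(f_\R)_*$ and $d$ for its degree, the characteristic polynomial of $\hat f_*$ on all of $H_1(\hat X;\R)$ is then $\chi_\R(t)\cdot t^{d}\chi_\R(1/t)$ up to a constant. That product is reciprocal automatically, whatever $\chi_\R$ is, so its reciprocity cannot be descended to $\chi_\R$ itself --- which is precisely why, in Section \ref{sec:reallyreal}, reciprocity of $\chi_\R$ enters as a fact checked from explicit formulas rather than as an output of the double-cover computation (cf.\ Remark \ref{rmk:reciprocal}).

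Your second route rests on an incorrect key step: a linear automorphism of $\cp^2$ preserving the cuspidal cubic must fix the cusp and the unique smooth flex, hence acts on the parameter of $C_{reg}$ only by homotheties $t\mapsto\lambda t$; a substitution such as $t\mapsto 1-t$ is induced by \emph{no} linear map preserving $C$, so the reparametrization matching the parameters of $\check f$ and $\check f^{-1}$ does not come from an automorphism of $\cp^2$, let alone one that permutes the blown-up points and lifts to $X$. Uniqueness in Theorem \ref{thm:realizable} cannot substitute for this, since $\check f$ and $\check f^{-1}$ have different determinants $\delta$ and $\delta^{-1}$. The argument that works keeps your intersection-form idea but stays on $X(\R)$ and uses the cubic differently: $X(\R)\setminus C$ is orientable because the real two-form $\omega$ with divisor $-C$ restricts to a volume form there; the inclusion $X(\R)\setminus C\hookrightarrow X(\R)$ induces an isomorphism on $H_1(\cdot;\R)$ (both groups are generated by the real lines through the blown-up points, by Proposition \ref{prop:classofrealline} and a retraction of the complement onto their union); and $f^*\omega=\delta\omega$ with $\delta>0$ shows $f_\R$ preserves this orientation, hence the non-degenerate intersection pairing on $H_1(X(\R)\setminus C;\R)$. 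Then $(f_\R^{-1})_*=(f_\R)_*^{-1}$ is the adjoint of $(f_\R)_*$ with respect to that pairing, and an operator is conjugate to its adjoint, so the two characteristic polynomials coincide. In short, the invariant cubic is used to orient the complement, not to manufacture a conjugacy between $f$ and $f^{-1}$.
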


As is well-known, intersection theory considerations imply that the characteristic polynomial is reciprocal for the action $f_*:H_2(X;\R)\to H_2(X;\R)$ associated to the ambient complex automorphism.  The problem with $(f_\R)_*$ is that the real surface $X(\R)$ is non-orientable, so that one cannot employ intersection theory directly.

\begin{proof}
Since $C$ is $f$-invariant, the real map $f_\R$ restricts to a diffeomorphism on the open set $X(\R)\setminus C$.  We claim that the inclusion map 
$\iota:X(\R)\setminus C\hookrightarrow X(\R)$ induces an isomorphism 
$\iota_*:H_1(X(\R)\setminus C;\R)\to H_1(X(\R);\R)$. To see this, recall that in affine coordinates $(x,y)$, the regular part of $C$ is the set $\{y=x^3\}$.  Note that the image of $\iota_*$ includes the class of the proper transform $\ell_j$ of each real horizontal line $\{y=x_j^3\} \cap \cp^2(\R)$ that passes through a point $(x_j,x_j^3)$ blown up to obtain $X$.  By
Proposition \ref{prop:classofrealline}, the set $\{\ell_1,\dots,\ell_N\}$ of such lines is independent in and therefore generates $H_1(X(\R);\R)$.  On the other hand, the open set $X(\R)\setminus C$ can be retracted onto the union of precisely these lines, all of which meet at a single point in $X(\R)\setminus C$.  Hence their classes generate $H_1(X(\R)\setminus C;\R)$, too, which proves the claim.

It now suffices to show the characteristic polynomial is reciprocal for the action of $(f_\R)_*$ on $H_1(X(\R)\setminus C;\R)$.  The advantage is that $X(\R)\setminus C$ is orientable, with volume form  given by the restriction of the real meromorphic two form $\omega$ on $X$ with a simple pole along $C$.  Hence there is a well-defined and non-degenerate intersection form $\pair{\cdot}{\cdot}$ on $H_1(X(\R)\setminus C;\R)$.  Since $f^*\omega = \delta \omega$, the diffeomorphism $f_\R$ preserves orientation on $X(\R)\setminus C$ and therefore also the intersection form.  \chg{That is $(f_\R^{-1})_* = (f_\R)_*^{-1}$ is the intersection adjoint of $(f_\R)_*$ on $H_1(X(\R)\setminus C;\R)$.  Linear operators are conjugate to their adjoints, so $(f_\R)_*$ and $(f_\R^{-1})_*$ have the same characteristic polynomials. }
\end{proof}

Continuing to use the notation from the previous section, we let $e_{i,j} := E_{i,j}\cap X(\R)$ denote the real slice of each exceptional curve for the blowup $X\to \cp^2$.  We give $e_{i,j}$ the `clockwise' orientation described \cg{in Section \ref{sec:homology}}.  Similarly, we let $e_i^\pm := E_i^\pm \cap X(\R)$ denote the (strict transforms of the) real lines obtained by intersecting the critical lines of $\check f^{\pm 1}$ with $X(\R)$.  Each of these meets $C$ at two of the points  $p_j^\pm$.  Hence each intersection $e_i^\pm\cap C$ contains three distinct points, all in $\cg{\mathbb{A}^2(\R)}$.  In particular, $e_i^\pm$ is neither the line at infinity nor any vertical line.  By our convention above, all $e_i^\pm$ are oriented from left to right in $\cg{\mathbb{A}^2(\R)}$.

The hypotheses that $\check f$ properly fixes $C$ with positive determinant implies that the restriction $f_C$ preserves orientation along $C_{\R}$.  Hence at any non-indeterminate point $p\in C_{\R}$, we have that $\check f$ preserves \chg{the two-dimensional orientation} on $\cg{\mathbb{A}^2(\R)}$ if and only if locally near $p$, $\check f$ preserves the two components of $\cg{\mathbb{A}^2(\R)}\setminus C_{\R}$.  When this happens, we say that $\check f$ is \emph{orientation-preserving at $p$}, even though $\cg{\cp^2(\R)}$ is non-orientable.

\begin{prop}
\label{prop:orientation}
For each \qu{$p\in C(\R)$} \chg{that is not indeterminate or critical for $f$}, we have that $\check f$ is orientation preserving at $p$ if 
$$
\# \{i:f_C^{-1}(p_i^-)\prec p\} + \#\{i:p_i^+ \prec p\}
$$
is even and orientation reversing otherwise.
\end{prop}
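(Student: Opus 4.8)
The plan is to recast ``orientation preserving at $p$'' as a statement about the sign of the affine Jacobian of $\check f$ along $C(\R)$, to read that sign off a divisor computation, and to pin down one remaining global sign by a local analysis at the cusp. Throughout I use the affine coordinates $(x,y)$ on $\mathbb{A}^2$ fixed in Section~\ref{sec:homology}, so that $C_{reg}=\{g=0\}$ with $g(x,y)=x^3-y$, and I write $R:=\det D\check f$, a rational function on $\cp^2$. If $p\in C(\R)$ is neither indeterminate nor critical for $\check f$, then $\check f$ is a local diffeomorphism carrying a small neighborhood of $p$ onto a small neighborhood of $\check f(p)$, both inside $\mathbb{A}^2(\R)$ (the cusp is the only point of $C$ at infinity, and $f_C$ fixes it). Since $\delta>0$, the restriction $f_C$ preserves the orientation of $C(\R)$ induced by $\gamma$; evaluating $D\check f(p)$ on a frame consisting of a tangent vector to $C$ in the $\gamma$-direction together with a normal vector pointing into one fixed side of $C$ then shows, exactly via the biconditional preceding the proposition, that $\check f$ is orientation preserving at $p$ if and only if $R(p)>0$. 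So it suffices to determine the sign of $R$ along $C(\R)$.

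To that end I first compute $\operatorname{div} R$ on $\cp^2$. From $\check f^{*}\omega=\delta\omega$, where $\omega$ is a nonzero constant multiple of $dx\wedge dy/g$, one gets the identity of rational functions $R=\delta\,(g\circ\check f)/g$; since $\operatorname{div}(g)=C-3\ell_\infty$ and $\operatorname{div}(g\circ\check f)=\check f^{*}(C)-3\,\check f^{*}(\ell_\infty)$ (total transforms of divisors), and since $\check f$ contracts exactly the three critical lines $E_1^+,E_2^+,E_3^+$ (with $\check f(E_i^+)=p_i^-\in C$) while $\check f^{-1}(\ell_\infty)$ is a conic $\Gamma^+$ through the three indeterminate points $p_i^+$, this gives
$$
\operatorname{div} R=E_1^++E_2^++E_3^+-3\Gamma^++3\ell_\infty.
$$
Now restrict to $C$. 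Each line $E_i^+$ meets $C$ at the two indeterminate points $p_j^+,p_k^+$ it carries ($\{i,j,k\}=\{1,2,3\}$) and at a third point, which is contracted by $\check f$ to $p_i^-$ and hence equals $f_C^{-1}(p_i^-)$; the conic $\Gamma^+$ meets $C$ at $p_1^+,p_2^+,p_3^+$ and, with intersection multiplicity three, at the cusp; and $\ell_\infty$ is the cuspidal tangent, so it meets $C$ only at the cusp, with multiplicity three. All contributions at the cusp cancel, each $p_i^+$ survives with coefficient $2-3=-1$, and one is left with
$$
\operatorname{div}(R|_C)=\sum_{i=1}^3 f_C^{-1}(p_i^-)-\sum_{i=1}^3 p_i^+,
$$
a divisor supported (generically) on six distinct points of $C_{reg}$. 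Parametrizing $C_{reg}$ by $\gamma$, it follows that $R\circ\gamma$ is a rational function of one real variable having a simple zero at each parameter mapping to some $f_C^{-1}(p_i^-)$, a simple pole at each parameter mapping to some $p_i^+$, and no other zeros or poles; since such a function changes sign across every simple zero and simple pole, $\operatorname{sign}\bigl(R(\gamma(x))\bigr)=\varepsilon\cdot(-1)^{N(p)}$ for $p=\gamma(x)$, where $N(p)=\#\{i:f_C^{-1}(p_i^-)\prec p\}+\#\{i:p_i^+\prec p\}$ and $\varepsilon\in\{\pm1\}$ is a single constant (the sign of $R\circ\gamma$ at parameters preceding all six special points).

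It remains to show $\varepsilon=+1$, which I expect to be the main obstacle, because it forces one to examine $\check f$ at the singular point of $C$. I would pass to the chart $(\xi,\zeta)=(x/y,\,1/y)$ in which the cusp sits at the origin, $C$ becomes $\{\zeta^2=\xi^3\}$, and $\ell_\infty=\{\zeta=0\}$ is the cuspidal tangent line. This change of coordinates multiplies $\det D\check f$ by $y^3/(y')^3$ (with $y'$ the $y$-coordinate of $\check f$), a factor tending to $\delta^{-9}>0$ as one runs along $C$ toward the cusp; hence $\varepsilon$ is the sign of $\det D\check f$ at the origin in the $(\xi,\zeta)$ chart. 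There $\check f$ fixes the origin and is a local diffeomorphism; since $\check f$ preserves $C$, its derivative at the origin preserves the cuspidal tangent $\{\zeta=0\}$, and since $f_C$ acts on $C$ near the cusp as $t\mapsto t/\delta+O(t^2)$ in the standard cusp parameter $t=1/x$, the eigenvalue of $D\check f$ at the origin along $\{\zeta=0\}$ is $\delta^{-2}>0$. Expanding the analogous identity $\det D\check f=\delta\,(h\circ\check f)/h$ (now with $h=\xi^3-\zeta^2$) along the line $\{\zeta=0\}$ then gives $\det D\check f(\mathrm{cusp})=\delta\cdot(\delta^{-2})^3=\delta^{-5}>0$, so $\varepsilon=+1$. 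Together with the previous steps this proves that $\check f$ is orientation preserving at $p$ exactly when $N(p)$ is even.

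Finally I would dispose of the non-generic configurations, in which some of the six points of $\operatorname{div}(R|_C)$ collide or an intersection multiplicity jumps (for example $E_i^+$ tangent to $C$, or $\Gamma^+$ tangent to $C$ at some $p_i^+$). These I would handle either by specializing from the generic case, using that the orientation character at a fixed $p$ is locally constant in the data, or directly, noting that a point occurring with even multiplicity in $\operatorname{div}(R|_C)$ causes no sign change and that such degenerate points are in any case among those excluded from the statement.
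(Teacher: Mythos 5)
Your argument is correct, and it reaches the proposition by a genuinely different route than the paper. The paper's proof is entirely local: it first checks in the chart at the cusp (essentially your $(\xi,\zeta)$ chart) that the differential of $\check f$ is triangular with diagonal entries $\delta^{-2},\delta^{-3}$, hence $\check f$ preserves orientation near infinity; it then shows by an eigenvalue analysis at each non-indeterminate critical point $f_C^{-1}(p_i^-)$ that the sign of $\det D\check f$ flips as $p$ crosses that point, handles the indeterminate points $p_i^+$ by running the same argument for $\check f^{-1}$, and concludes by counting sign changes walking in from $x=-\infty$. You replace the three local analyses at the special points by one global computation: the identity $\det D\check f=\delta\,(g\circ\check f)/g$ coming from the invariant anticanonical form yields $\operatorname{div}(\det D\check f)=\sum E_i^+-3\Gamma^++3\ell_\infty$, and intersecting with $C$ (cusp contributions cancelling, each $p_i^+$ surviving with coefficient $-1$) shows that $\det D\check f$ restricted to $C(\R)$ has simple zeros exactly at the $f_C^{-1}(p_i^-)$ and simple poles exactly at the $p_i^+$; only the base sign at the cusp remains, and your normalization there agrees with the paper's local model ($\det=\delta^{-5}>0$). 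What your approach buys: the location and simplicity of every sign change drops out of a single divisor computation, and coincidences such as $f_C^{-1}(p_i^-)=p_j^+$ are automatically harmless, since they produce an even coefficient in $\operatorname{div}\bigl(\det D\check f|_C\bigr)$, matching the double count in the proposition's parity. What the paper's approach buys: it uses nothing beyond calculus (no intersection theory on a singular curve) and exhibits concretely how the flip happens at a contracted line. Two small points you should tighten rather than defer to genericity: the intersection multiplicity of $\Gamma^+$ with $C$ at the cusp is exactly three in all cases, not just generically (the cusp lies on no $E_i^+$, so $\check f$ is a local diffeomorphism there and $\Gamma^+$ is forced to be tangent to the cuspidal tangent $\ell_\infty$), and in the cusp computation you should record that the off-diagonal entry of $D\check f$ at the origin taking the direction of $\{\zeta=0\}$ out of itself vanishes (it follows from invariance of the cuspidal branch, or from your own identity, since otherwise $\det D\check f$ would blow up along $\{\zeta=0\}$) before reading off $\det=\delta\cdot\delta^{-6}=\delta^{-5}$. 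These are finishing touches, not gaps.
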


\noindent\qu{Keep in mind here that there are only six indeterminate points $p_i^+, p_i^-$ in total for $f$ and $f^{-1}$; also that \qu{$f_C^{-1}(p_i^-) := (\check{f}^{-1}|_C)(p_i^-) \in C(\R)$} is a point for each $i\in\{1,2,3\}$, even though $p_i^-$ is indeterminate for $\check f^{-1}$.}

\begin{proof}
\qu{
We claim first that $\check f$ is orienation preserving at all \qu{$p\in C(\R)$} near the cusp, i.e. near the line at infinity.  This can be seen by employing new affine coordinates $(x',y') = (1/y,x/y)$ identifying the cusp with $(0,0)$ and $C$ with $(y')^3 = (x')^2$.  The fact that near $(x',y') = (0,0)$, the map $\check f$ is a local diffeomophism preserving $\{(y')^3 = (x')^2\}$ means that the differential $D_{(0,0)}\check f$ is diagonal of the form
$$
\left[\begin{matrix} \alpha^3 & 0 \\ 0 & \alpha^2\end{matrix}\right].
$$
Indeed $1/\alpha = \delta$ is just the determinant of $\check f$.  It follows for all $(x',y')$ near $(0,0)$ that $D_{(x',y')}f$ must preserve the two components of the complement of $\{(y')^3 = (x')^2\}$.  Conjugating back to our old affine coordinate, we find that $\check f$ preserves orientation about any point \qu{$p\in C(\R)$} close enough to infinity.}


Moreover, $\check f$ cannot change from orientation preserving to reversing or vice versa except at points \qu{$p\in C(\R)$} where $D_p \check f$ is singular or undefined, so it remains to understand what happens near points where $\check f$ is critical or indeterminate. 

\qu{Let us consider first the case when $p$ moves past a non-indeterminate critical point \qu{$q\in C(\R)$} for $\check f$.  That is, $q = f_C^{-1}(p_i^-)$ is the unique non-indeterminate point in $e_i^+\cap C$ for some $i\in\{1,2,3\}$.  We can choose local coordinates for source and target that identify $q$ and $p_i^-$ with $(0,0)$, \qu{$C(\R)$} with the horizontal axis oriented from left to right, and $e_i^+$ with the vertical axis so that points above \qu{$C(\R)$} in affine coordinates are above the horizontal axis in local coordinates.  Since $\delta >0$, it follows that $(1,0)$ is an eigenvector for $D_{(x,0)}\check f$ with positive eigenvalue at all points $(x,0)$ on the horizontal axis.  On the other hand $\det D_{(x,0)} \check f$ has a simple zero along the vertical axis, so the remaining eigenvalue of $D_{(x,0)}\check f$ changes sign as $x$ passes $0$ and the eigenvector for this eigenvalue is transverse to the horizontal axis.  It follows that the effect of $\check f$ on orientation switches as \qu{$p\in C(\R)$} passes $f_C^{-1}(p_i^-)$; i.e. if the differential preserves the upper and lower half planes for $x<0$, then $D_{(x,0)}\check f$ reverses the half planes when $x>0$, and vice versa.  
}



Similarly, the effect of $\check f^{-1}$ on orientation switches whenever $p$ moves past a point $f_C(p_i^+)$.  But this is the same as saying that $\check f$ changes effect on orientation when $p$ moves past an indeterminate point $p_i^+$. 

All told, by decreasing the parameter of $p=\gamma(x)$ to $-\infty$, we see that $\check f$ preserves orientation at $p$ precisely when there are an even number of indeterminate or critical points $p_i^+, f_C^{-1}(p_i^-)$ in between.
\end{proof}

In order to understand the induced action $\cg{(f_\R)_*}$ on $H_1(X(\R);\R)$ it suffices to know what happens to the real exceptional curves $e_{i,j}$.  The following result, together with Proposition \ref{prop:classofrealline} gives a practical means for extracting this information from given orbit data.

\begin{thm}
\label{thm:realaction}
The action $\cg{(f_\R)_*}:H_1(X(\R))\to H_1(X(\R))$ is given on generators $e_{i,j}$ by
\begin{itemize}
\item $\cg{(f_\R)_*} e_i^+ = \pm e_{i,1}$, where the sign is positive if and only if the unique non-indeterminate point $f_C^{-1}(p_i^-)$ in $e_i^+\cap \qu{C(\R)}$ is preceded (in $\qu{C(\R)}$) by an odd number of the three indeterminate and critical points for $\check f$ in $\qu{C(\R)}\setminus e_i^+$.
\item if $1\leq j \leq n_i-1$, then $\cg{(f_\R)_*} e_{i,j} = \pm e_{i,j+1}$, where the sign is positive if and only if $p_{i,j}$ is preceded by an even number of indeterminate and critical points $p_k^+,f_C^{-1}(p_k^-)\in \qu{C(\R)}$ for $\check f$;
\item $\cg{(f_\R)_*} e_{i,n_i} = \pm e_{\sigma(i)}^-$, where the sign is positive if and only if $f_C(p_{\sigma(i)}^+) \in \qu{C(\R)}$ is preceded by an odd number of the three indeterminate and critical points for $\check f^{-1}$ in $\qu{C(\R)}\setminus e_{\sigma(i)}^-$.
\end{itemize}
\end{thm}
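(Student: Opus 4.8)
The plan is to pin down, for each generator, the sign by which $(f_\R)_*$ twists orientations. From the construction of the lift in Proposition~\ref{prop:realizable} and the induced action~\eqref{h2action} on $H_2$, the automorphism $f$ restricts to biholomorphisms $E_i^+\to E_{i,1}\to\dots\to E_{i,n_i}\to E_{\sigma(i)}^-$ of the relevant irreducible curves, so $f_\R$ restricts to diffeomorphisms $e_i^+\to e_{i,1}\to\dots\to e_{i,n_i}\to e_{\sigma(i)}^-$ of circles. Hence $(f_\R)_*$ does carry each generator to $\pm$ the next one, and the content of the theorem is the determination of each sign, which records only whether the corresponding circle diffeomorphism respects the chosen (clockwise, resp.\ left to right) orientations. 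I will treat the middle steps first, then the first step, and then reduce the last step to the first.

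For a middle step $e_{i,j}\to e_{i,j+1}$ with $1\le j\le n_i-1$, the key point is that $\check f$ is a local biholomorphism at $p_{i,j}$: if $p_{i,j}$ lay on a critical line $E_k^+$ of $\check f$, then we would get $p_{i,j+1}=\check f(p_{i,j})=p_k^-=p_{k,1}$, contradicting the fact (noted after Proposition~\ref{prop:realizable}) that the blown-up points $p_{\bullet,\bullet}$ are pairwise distinct. Therefore the restriction of $f$ to the exceptional curve $E_{i,j}=\mathbf P(T_{p_{i,j}}\cp^2)$ is the projectivized differential $\mathbf P(D_{p_{i,j}}\check f)$, and on real points $f_\R|_{e_{i,j}}$ is $\mathbf P(D_{p_{i,j}}\check f)\colon\rp^1\to\rp^1$. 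A linear map $A\in GL_2(\R)$ induces a diffeomorphism of $\rp^1$ that preserves orientation if and only if $\det A>0$: one may deformation-retract $GL_2^+(\R)$ onto $SO(2)$, which acts on $\rp^1$ by rotations, and $GL_2^-(\R)$ onto the coset of a reflection, which reverses $\rp^1$. Since the clockwise orientations of $e_{i,j}$ and $e_{i,j+1}$ are both defined from the affine chart $(x,y)$, we get $(f_\R)_*e_{i,j}=+e_{i,j+1}$ exactly when $\det D_{p_{i,j}}\check f>0$, i.e.\ exactly when $\check f$ is orientation preserving at $p_{i,j}$ in the ordinary two-dimensional sense on the orientable manifold $\mathbb A^2(\R)$. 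Proposition~\ref{prop:orientation} then translates this into the stated parity condition.

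The first step $e_i^+\to e_{i,1}$ is the heart of the matter, because here $\check f$ is \emph{critical} at the relevant point $q_i:=f_C^{-1}(p_i^-)\in e_i^+\cap C(\R)$ — the unique point of that intersection other than the two indeterminate points $p_j^+,p_k^+$ of $\check f$ that lie on $E_i^+$ — and $f_\R|_{e_i^+}$ is no longer a projectivized linear map. I would work in local coordinates $(u,v)$ near $q_i$ and $(U,V)$ near $p_{i,1}$, each positively oriented relative to $(x,y)$, with $C=\{v=0\}$ and $e_i^+=\{u=0\}$ on the source and $C=\{V=0\}$ on the target. Since $\check f$ contracts $e_i^+$ to $p_{i,1}$ and preserves $C$, it has the form $\check f(u,v)=\bigl(u\,a(u,v),\,u\,b(u,v)\bigr)$ with $b(u,0)\equiv0$; blowing up $p_{i,1}$ then sends $(0,v)$ to the direction $[a(0,v):b(0,v)]$, so in the slope chart on $e_{i,1}$ the map $f_\R|_{e_i^+}$ is $v\mapsto(\beta/a_0)v+O(v^2)$, where $a_0=a(0,0)>0$ is the derivative of $f_C$ along $C$ at $q_i$ (positive because $\delta>0$) and $\beta=b_v(0,0)$ is fixed by $V(\check f(u,v))=\beta\,uv+(\text{higher order})$. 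Thus the sign of $(f_\R)_*e_i^+$ is governed by $\operatorname{sgn}\beta$, together with the bookkeeping comparing the clockwise orientation of $e_{i,1}$ with the left to right orientation of $e_i^+$. Now $\operatorname{sgn}\beta$ records which of the two local components of $\mathbb A^2(\R)\setminus C$ at $p_{i,1}$ receives a prescribed one of the four sectors that $C$ and $e_i^+$ cut out near $q_i$; one reads this off, exactly as in the proof of Proposition~\ref{prop:orientation}, from the way $\check f$'s effect on orientation toggles each time the point moves past one of its six indeterminate or critical points on $C(\R)$ — and the two of those that lie on $e_i^+$ itself ($p_j^+$ and $p_k^+$), together with the orientation conventions, are exactly what turns the ``even count among six'' of the middle case into the ``odd count among the three lying off $e_i^+$'' of the first bullet. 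Finally, the third bullet is the first bullet applied to $\check f^{-1}$ (whose critical lines are the $E_i^-$): since $\check f^{-1}$ contracts $E_{\sigma(i)}^-$ to $p_{\sigma(i)}^+=p_{i,n_i}$, the first bullet for $\check f^{-1}$ gives $(f_\R^{-1})_*e_{\sigma(i)}^-=\pm e_{i,n_i}$, which rearranges to $(f_\R)_*e_{i,n_i}=\pm e_{\sigma(i)}^-$ with the stated sign, and the orientation analysis above uses only $\delta>0$, which holds for $\check f^{-1}$ as well.

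I expect the main obstacle to be precisely that last sign count in the first bullet: keeping all the orientation conventions consistent (the clockwise orientations of the exceptional circles, the left to right orientation of the line $e_i^+$, the choice of which local half-space lies above $C$, and the positive orientations of the coordinate charts) and correctly accounting for the two special points $p_j^+,p_k^+$ lying on $e_i^+$, so as to land on ``odd among the three off $e_i^+$'' rather than some other parity. By contrast the middle case is essentially just the elementary fact about $\mathbf P GL_2(\R)$ together with Proposition~\ref{prop:orientation}, and the reduction of the third bullet to the first is purely formal.
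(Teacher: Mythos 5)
Your handling of the second bullet is essentially the paper's argument (the sign is whether $\check f$ preserves two\-dimensional orientation at $p_{i,j}$, read off via Proposition \ref{prop:orientation}), and your reduction of the third bullet to the first applied to $\check f^{-1}$ is also how the paper disposes of it. The problem is the first bullet, and you flag it yourself: after setting up local coordinates and reducing the sign of $(f_\R)_*e_i^+$ to $\operatorname{sgn}\beta$ plus ``bookkeeping,'' you never actually determine that sign. You assert that the orientation-toggling count, ``together with the orientation conventions,'' converts the even-among-six criterion into odd-among-the-three-off-$e_i^+$, but that conversion is exactly the content of the first bullet and your framework as written contains no mechanism to produce it: nothing in your setup relates the local position of the line $e_i^+$ relative to $C(\R)$ at $q_i:=f_C^{-1}(p_i^-)$ to the global ordering of the three points of $e_i^+\cap C(\R)$, and without that input $\operatorname{sgn}\beta$ cannot be pinned down.

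The paper's proof supplies precisely this missing ingredient. Instead of the tangent vector to $e_i^+$ at $q_i$, it tracks a parallel translate $\tilde v$ based at a point $p\in C(\R)$ slightly preceding $q_i$: the sign is $+$ exactly when $D_p\check f(\tilde v)$ points above $C(\R)$, i.e.\ when either $\tilde v$ points above $C(\R)$ and $\check f$ preserves orientation at $p$, or $\tilde v$ points below and $\check f$ reverses orientation at $p$. The key geometric observation is that a non-vertical real line lies above $C(\R)$ for $x\ll 0$, hence $\tilde v$ points above $C(\R)$ if and only if $q_i$ lies between the other two intersection points $p_j^+,p_k^+$ of $e_i^+$ with $C(\R)$, i.e.\ exactly one of them precedes $q_i$. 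Feeding this parity of the ``on-$e_i^+$'' points into the criterion of Proposition \ref{prop:orientation} at $p$ is what turns ``even among all six'' into ``odd among the three in $C(\R)\setminus e_i^+$.'' Since your third bullet is derived from your first, both remain unproven as written; supplying the parallel-translate observation (or an equivalent determination of which sector of $C\cup e_i^+$ at $q_i$ maps to which side of $C$ at $p_{i,1}$) is what is needed to close the argument.
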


\begin{proof}
We deal with the second item first.
Since $p_{i,j+1} = \check f(p_{i,j})$ for $1\leq j\leq n_i-1$, we have $f_* e_{i,j} = \pm e_{i,j}$.  As $e_{i,j}$ identifies with the projectivization of the tangent space at $p_{i,j}$, the sign is determined by whether or not $\check f$ preserves orientation at $p_{i,j}$, i.e. by the criterion in Proposition \ref{prop:orientation}.  Hence the criterion for the sign of $\cg{(f_\R)_*} e_{i,j}$ in this theorem follows directly from that one.  

The arguments for the first and third items are similar so we deal only with the first. The real automorphism $f_\R:X(\R)\to X(\R)$ maps $e_i^-$ diffeomorphically onto $e_{i,1}$.  So $\cg{(f_\R)_*}e_i^+ = \pm e_{i,1}$, and the sign will be determined by the image of the forward tangent vector $v$ to $e_i^+$ at the unique non-indeterminate point $f_C^{-1}(p_i^-)$ of $e_i^+\cap \qu{C(\R)}$.  

Moreover, to understand the image of $v$ it is perhaps easier to consider the image of a parallel translate $\tilde v$ originating from a point $p\in \qu{C(\R)}$ slightly preceding $f_C^{-1}(p_i^-)$.  If $D_p \check f_\R(\tilde v)$ points above $\qu{C(\R)}$ at $\check f(p)$, then the $f_\R$ maps $e_i^+$ about $e_{1,1}$ in a clockwise fashion, i.e. $\cg{(f_\R)_*}e_i^+ = e_{1,1}$.  Otherwise $\cg{(f_\R)_*}e_i^+=-e_{1,1}$.

Hence the sign of $\cg{(f_\R)_*} e_i^+$ will be positive if and only if we are in one of two cases:
\begin{itemize}
 \item $\tilde v$ itself points above $\qu{C(\R)}$ and $\check f_\R$ is orientation preserving at $p$;
 \item or $\tilde v$ points below $\qu{C(\R)}$ and $\check f_\R$ is orientation reversing at $p$.  
\end{itemize}
Since any non-vertical line passes above all points $(x,y)\in \qu{C(\R)}$ with $x <<0$, we have that $\tilde v$ points above $\qu{C(\R)}$ if and only if $f_C^{-1}(p_i^-)$ lies between the other two points $p_j^+,p_k^+$, $j\neq k\neq i$ in the intersection $e_i^+\cap \qu{C(\R)}$.  So one infers the first item in the present theorem from these observations and Proposition \ref{prop:orientation}.
\end{proof}

We now prove Theorem \maximal \ by applying Theorem \ref{thm:realaction} to the relevant orbit data.  Let $\sigma$ be the cyclic permutation $(123)$. \cg{Using the fact that parameters for the indeterminate points of $\check f^{-1}$ is given by rational functions of a root $t$ of the characteristic polynomial and their sum is equal to $t-2$, we see} 
the characteristic polynomial associated to any orbit data of the form $n_1,n_2,n_3,\sigma$ is given by (see Theorem A.1 in \cite{beki3} and Equation (2) in \cite{dil})
\begin{equation}
\label{eqn:cycliccharpoly}
\chi(t) = t-t^{n_1+n_2+n_3}+(t-1)(t^{n_1}+1)(t^{n_2}+1)(t^{n_3}+1).
\end{equation}

\begin{prop}
\label{prop:charpoly}
The characteristic polynomial $\chi(t)$ has a real root $t=\delta >1$ if and only if $n_1+n_2+n_3 \geq 10$.  In fact, this root is increasing in each of the orbit lengths $n_j$.
\end{prop}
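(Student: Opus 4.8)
The plan is to analyze the polynomial $\chi(t)$ directly from its factored form \eqref{eqn:cycliccharpoly}. Observe that for $t>1$, the second term $(t-1)(t^{n_1}+1)(t^{n_2}+1)(t^{n_3}+1)$ is strictly positive, so the sign of $\chi(t)$ for $t>1$ is governed by the competition between this positive term and the (negative) quantity $t^{n_1+n_2+n_3} - t$. First I would rewrite $\chi(t)/t^{n_1+n_2+n_3}$ after dividing through; expanding the product $(t^{n_1}+1)(t^{n_2}+1)(t^{n_3}+1)$ and comparing leading terms shows that as $t\to\infty$, $\chi(t)\sim -t^{n_1+n_2+n_3}+t\cdot t^{n_1+n_2+n_3}(1-\tfrac1t+\cdots)$; more carefully, the leading term of $(t-1)(t^{n_1}+1)(t^{n_2}+1)(t^{n_3}+1)$ is $t^{n_1+n_2+n_3+1}$, which dominates $t^{n_1+n_2+n_3}$, so $\chi(t)\to+\infty$. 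Hence the real question is whether $\chi(t)$ ever dips below zero for $t>1$, equivalently whether $\chi$ has a root in $(1,\infty)$; since $\chi(1) = 1 - 1 + 0 = 0$, I should instead examine the behavior of $\chi$ just to the right of $t=1$, i.e. the sign of $\chi'(1)$ (or the first nonvanishing derivative).

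The key computation is therefore $\chi'(1)$. Differentiating \eqref{eqn:cycliccharpoly} and evaluating at $t=1$: the term $t - t^{n_1+n_2+n_3}$ contributes $1 - (n_1+n_2+n_3)$; the product term $(t-1)(t^{n_1}+1)(t^{n_2}+1)(t^{n_3}+1)$ has a simple zero at $t=1$, so its derivative there is just $(t^{n_1}+1)(t^{n_2}+1)(t^{n_3}+1)\big|_{t=1} = 2\cdot2\cdot2 = 8$. Thus $\chi'(1) = 9 - (n_1+n_2+n_3)$. So when $n_1+n_2+n_3 < 9$, $\chi'(1)>0$, meaning $\chi$ is positive just to the right of $1$ and (combined with $\chi(+\infty)=+\infty$ and a convexity/sign argument) has no root in $(1,\infty)$; when $n_1+n_2+n_3 > 9$, $\chi'(1) < 0$, so $\chi$ is negative just right of $1$, and since $\chi(+\infty) = +\infty$ there must be a root $\delta>1$ by the intermediate value theorem. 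The boundary case $n_1+n_2+n_3 = 9$ needs $\chi''(1)$: a second differentiation gives $\chi''(1)$ in terms of $\sum n_i(n_i-1)$ and cross terms; one checks it is positive there, so $t=1$ is a local min and again there is no root $>1$ — thus the threshold is exactly $n_1+n_2+n_3\geq 10$. Wait: I should double check that $n_1+n_2+n_3=9$ truly yields no root rather than a double root away from $1$; the cleanest route is to note the characteristic polynomial is reciprocal (as remarked before Theorem~\ref{thm:reciprocal}), and invoke the known classification (e.g. the Coxeter/Weyl-type dichotomy, or simply the explicit small cases) to confirm that $n_1+n_2+n_3=9$ gives a Salem-free, cyclotomic-product polynomial.

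For uniqueness of the root in $(1,\infty)$ and monotonicity, I would argue as follows. For uniqueness: rewrite $\chi(t) = 0$ for $t>1$ as
$$
\frac{t^{n_1+n_2+n_3} - t}{t-1} = (t^{n_1}+1)(t^{n_2}+1)(t^{n_3}+1),
$$
i.e. $t(1 + t + \cdots + t^{n_1+n_2+n_3-2}) = \prod_i (t^{n_i}+1)$, and compare growth rates of the two sides; alternatively, show $\chi$ has at most one sign change on $(1,\infty)$ via Descartes-type reasoning on a suitable rearrangement, or by showing $\chi(t)/(t-1)$ is eventually monotone. For monotonicity in the $n_j$: fix the other two lengths and regard $N = n_3$ as the variable; write $\chi_N(\delta_N) = 0$ where $\delta_N$ is the root, and show $\partial\chi/\partial N < 0$ at $t=\delta_N>1$ (treating $N$ as continuous, differentiating $t^N = e^{N\log t}$ with respect to $N$) while $\partial\chi/\partial t > 0$ at $\delta_N$ (this last being exactly the "at most one root, coming up from below" fact), so by the implicit function theorem $\delta_N$ increases with $N$.

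The main obstacle I expect is the boundary/uniqueness bookkeeping: verifying that the dichotomy at $\chi'(1)$ genuinely controls the existence of a root in all of $(1,\infty)$ (not merely near $t=1$), and pinning down the $n_1+n_2+n_3=9$ case — this is where one must either do a careful second-derivative computation or lean on the reciprocal structure and the known cyclotomic factorization of the exceptional small cases. Everything else ($\chi(1)=0$, $\chi(+\infty)=+\infty$, the formula $\chi'(1) = 9 - \sum n_i$, and the implicit-function monotonicity) is routine.
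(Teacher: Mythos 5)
Your existence argument coincides with the paper's: $\chi(1)=0$, $\chi'(1)=9-n_1-n_2-n_3$, positivity of $\chi$ far to the right (the paper checks $\chi(2)>1$, you use $\chi(t)\to+\infty$), and the intermediate value theorem. The genuine gap is in the converse direction. For $n_1+n_2+n_3\le 9$ your primary argument --- $\chi'(1)>0$ together with $\chi(+\infty)=+\infty$ plus an unspecified ``convexity/sign argument'' --- does not rule out a root in $(1,\infty)$: positivity near $1$ and near $\infty$ is perfectly compatible with a dip below zero in between, and no convexity property of $\chi$ is apparent. The paper does not attempt a calculus argument here at all; it invokes the geometric fact (\cite[Proposition 2.2]{dil}) that all roots of $\chi$ have modulus $1$ whenever at most nine points are blown up, which also disposes of your boundary case $n_1+n_2+n_3=9$. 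Your fallback (``lean on the known cyclotomic classification'') is exactly this, so the correct move is to cite it rather than pursue second derivatives or Descartes-type counts. Similarly, the uniqueness of the root $\delta>1$, which you need both for the dichotomy and for monotonicity, should not be extracted from growth-rate comparisons: the paper uses \cite[Theorem~0.3]{difa}, already quoted in Section \ref{sec:basic}, which gives at most the two real roots $\delta>1>\delta^{-1}$ off the unit circle.

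Your monotonicity argument is genuinely different from the paper's and can be made to work, but two steps you left as claims need care. The paper increments $n_1$ by one and shows, using $\chi(\delta)=0$, that the incremented polynomial $\tilde\chi$ satisfies $\tilde\chi(\delta)=\delta(1-\delta)-(1-\delta)^2(1+\delta^{n_2})(1+\delta^{n_3})<0$, whence $\delta<\tilde\delta$ by uniqueness. You instead treat $n_3$ as a continuous parameter and apply the implicit function theorem. There, the sign $\partial\chi/\partial n_3<0$ at $t=\delta$ is not termwise obvious: one has $\partial\chi/\partial n_3=(\log t)\,t^{n_3}\bigl[(t-1)(t^{n_1}+1)(t^{n_2}+1)-t^{n_1+n_2}\bigr]$, and the bracket is negative at $t=\delta$ only after substituting the root relation $\chi(\delta)=0$, which gives it the value $-(\delta^{n_1+n_2}+\delta)/(\delta^{n_3}+1)$. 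Moreover $\chi'(\delta)>0$ requires $\delta$ to be a simple root (again from \cite{difa}), not merely ``the last root coming up from below,'' which only yields $\chi'(\delta)\ge 0$. The paper's discrete increment sidesteps both issues, which is what its version buys over yours; your version, once these two points are supplied, gives monotonicity in a single uniform computation.
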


\begin{proof}
One easily computes from \eqref{eqn:cycliccharpoly} that $\chi(1) = 0$, $\chi(2) > 1$ and $\chi'(1) = 9-n_1-n_2-n_3$.  So when $n_1+n_2+n_3 \geq 10$, it follows from the intermediate value theorem that $\chi$ has a real root between $1$ and $2$.  On the other hand, it follows from general geometric considerations (see e.g. \cite[Proposition 2.2]{dil}) that all roots of $\chi(t)$ have magnitude $1$ whenever $n_1+n_2+n_3\leq 9$.

Since $\chi$ has at most \emph{one} root $\delta>1$ (see  \cg{\cite[Theorem~0.3]{difa}}), it follows (when $\delta$ exists) for $t>1$ that $\chi(t) > 0$ if and only if $t>\delta$.  Thus it suffices to show that if $\delta>1$ is a root of $\chi(t)$, then $\tilde\chi(\delta)<0$, where $\tilde\chi$ is the polynomial obtained from \eqref{eqn:cycliccharpoly} by replacing $n_1$ with $n_1+1$.  In fact, 
\begin{equation*}
\begin{aligned}
\tilde\chi(\delta) &= \tilde\chi(\delta)-\delta\chi(\delta) \\
&\cg{= \delta (1-\delta) - (1-\delta)(1+\delta^{n_2})(1+\delta^{n_3}) [ (1+\delta^{n_1+1}) - \delta (1+\delta^{n_1})]}\\
&= \delta (1-\delta) - (1-\delta)^2(1+\delta^{n_2})(1+\delta^{n_3}) < 0.
\end{aligned}
\end{equation*}
\end{proof}

\subsection{The Coxeter case.} In this subsection we deal with the first case of Theorem \maximal.  That is, we fix our orbit data to be $n_1=n_2=1$, $n_3=n\geq 8$ and $\sigma = (123)$ cyclic.  This situation is of particular interest (see \cite{mcm}) because it corresponds to the so-called Coxeter element in a certain infinite reflection group.  Regardless, the characteristic polynomial \eqref{eqn:cycliccharpoly} specializes to
\begin{equation}
\label{eqn:poly11n}
0=\chi_n(t) := t^n (t^3-t-1) + t^3+t^2-1
\end{equation}
Take $\delta = \delta_n > 1$ to be the largest real root of $\chi_n$.

To obtain a realization $\check f$ of our orbit data and then understand the action  $\cg{(f_\R)_*}$ of the associated automorphism $f_\R:X(\R)\to X(\R)$, we need to understand how the critical orbits of $\check f$ must be distributed along $C$.  In this case, these are $p_{1,1} = p_1^- = p_2^+$, $p_{2,1} = p_2^- = p_3^+$, and $p_3^- = p_{3,1} \mapsto \dots \mapsto p_{3,n} = p_1^+$.  

\begin{lem}
\label{lem:order} When $n\geq 8$, there exists a basic map $\check f$ realizing the orbit data $1,1,n,(123)$ and properly fixing the cuspidal cubic $C$ with determinant $\delta>1$.
If $p_{fix}$ is the unique (finite) fixed point of $f_C$, then we may suppose that $p_i^\pm \prec p_{fix}$ for all $i=1,2,3$.  Moreover, 
$$
f_C^2(p_2^+) \prec p_1^+ \prec f_C(p_2^+) = f_C^3(p_3^+).
$$
\end{lem}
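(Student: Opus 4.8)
The plan is to apply the second half of Theorem \ref{thm:realizable} with the orbit data $n_1 = n_2 = 1$, $n_3 = n$, $\sigma = (123)$, using the explicit parameter values it provides, and then to read off the required ordering relations directly from those values. First I would invoke Proposition \ref{prop:charpoly}: since $n \geq 8$ forces $n_1 + n_2 + n_3 = n + 2 \geq 10$, the characteristic polynomial $\chi_n$ in \eqref{eqn:poly11n} has a real root $\delta = \delta_n > 1$. Next, because $\sigma$ is cyclic with $i \mapsto j \mapsto k$, Theorem \ref{thm:realizable} gives the $f_C$-parameters of the indeterminate points $p_i^-$ of $\check f^{-1}$ by the cyclic formula $t_i = \frac{1 + \delta^{n_k} + \delta^{n_j + n_k}}{1 - \delta^{n_i + n_j + n_k}}$; specializing $(n_1,n_2,n_3) = (1,1,n)$ yields the three values
\begin{equation*}
t_1 = \frac{1 + \delta^n + \delta^{n+1}}{1 - \delta^{n+2}}, \qquad
t_2 = \frac{1 + \delta + \delta^{n+1}}{1 - \delta^{n+2}}, \qquad
t_3 = \frac{1 + \delta + \delta^{2}}{1 - \delta^{n+2}}.
\end{equation*}
Here $t_i$ is the parameter, in the normalized coordinate $\tilde\gamma$ of Theorem \ref{thm:realizable}, of $p_i^-$, and $p_{fix} = \tilde\gamma(0)$. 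All three numerators are positive and the common denominator $1 - \delta^{n+2}$ is negative, so each $t_i < 0$; that is, $p_i^- \prec p_{fix}$ for all $i$. Since the data forces $p_1^- = p_2^+$, $p_2^- = p_3^+$ and (tracing the length-$n$ orbit back) $p_3^-$ precedes $p_1^+ = p_{3,n} = \check f^n(E_3^+)$ along the orbit, the relation $p_i^\pm \prec p_{fix}$ for all $i$ follows once we also check that $p_1^+$, which is $f_C^n(p_3^-)$, still precedes $p_{fix}$; because $\delta > 1$ and $f_C$ acts by $s \mapsto \delta s$ in this coordinate, $f_C^n$ multiplies a negative parameter by $\delta^n > 0$, keeping it negative, so this holds automatically.

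The heart of the lemma is the chain $f_C^2(p_2^+) \prec p_1^+ \prec f_C(p_2^+) = f_C^3(p_3^+)$. I would translate each term into the $\tilde\gamma$-parameter and reduce to an inequality in $\delta$. We have $p_2^+ = p_1^-$ at parameter $t_1$, so $f_C^k(p_2^+)$ has parameter $\delta^k t_1$; similarly $p_3^+ = p_2^-$ has parameter $t_2$, so $f_C^3(p_3^+)$ has parameter $\delta^3 t_2$; and $p_1^+$ is the terminal point of the length-$n$ orbit starting at $p_3^- = \tilde\gamma(t_3)$, hence has parameter $\delta^n t_3$. The equality $f_C(p_2^+) = f_C^3(p_3^+)$ is thus the identity $\delta t_1 = \delta^3 t_2$, i.e. $t_1 = \delta^2 t_2$, which from the formulas above is $1 + \delta^n + \delta^{n+1} = \delta^2(1 + \delta + \delta^{n+1})$, i.e. $1 + \delta^n + \delta^{n+1} = \delta^2 + \delta^3 + \delta^{n+3}$; this is precisely $\delta^n(\delta^3 - \delta - 1) + \delta^3 + \delta^2 - 1 = 0$, which is $\chi_n(\delta) = 0$ from \eqref{eqn:poly11n}. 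So the equality is automatic. It then remains to prove the two strict inequalities, which become, after dividing by $t_1 = \delta^2 t_2 < 0$ (reversing the sense): $\delta^{n+2} t_3 / t_1 < 1$ for the left inequality $f_C^2(p_2^+) \prec p_1^+$ (i.e. $\delta^2 t_1 < \delta^n t_3$ as negative numbers means $\delta^2 t_1 > \delta^n t_3$ in magnitude-reversed order — I would be careful here and just clear denominators instead), and correspondingly $\delta^n t_3 < \delta t_1$ for the right one. Clearing the (negative) common denominator, both reduce to polynomial inequalities in $\delta$ of the shape $\delta^n(1 + \delta + \delta^2)$ versus $\delta^2(1 + \delta^n + \delta^{n+1})$ and versus $\delta(1 + \delta^n + \delta^{n+1})$; using $\chi_n(\delta) = 0$ to substitute $\delta^{n+3} = \delta^n(\delta + 1) + \delta^3 + \delta^2 - 1$ repeatedly, each collapses to a statement like $\delta^3 + \delta^2 - 1 > 0$ and $\delta^{n-1} > \text{(lower order)}$, which hold since $\delta > 1$ (indeed $\delta_n$ is close to the plastic-number root of $t^3 - t - 1$, comfortably above $1.3$).

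The main obstacle I anticipate is bookkeeping with signs and the direction of inequalities: every parameter of interest is negative, the change of coordinate $\tilde\gamma$ introduces one orientation convention, and "$\prec$" is an order on $C(\R)$ that must be matched correctly to increasing parameter. The cleanest route is to never divide by negative quantities — instead, for each claimed relation $\tilde\gamma(a) \prec \tilde\gamma(b)$, clear the common denominator $1 - \delta^{n+2} < 0$ once and for all, reducing everything to comparisons between positive polynomials in $\delta$, and then eliminate the highest power $\delta^{n+3}$ (or $\delta^{n+2}$) using the defining relation $\chi_n(\delta) = 0$. After that substitution each inequality is a genuinely low-degree statement in $\delta$ together with "$\delta > 1$" and a crude numerical lower bound on $\delta$ (obtainable from $\chi_n(1) = 0$, $\chi_n'(1) = 9 - (n+2) < 0$, and $\chi_n$ of a convenient value like $1.3$), which finishes the proof. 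The existence claim — that a basic $\check f$ with these data and determinant $\delta$ exists and properly fixes $C$ — follows from the converse part of Theorem \ref{thm:realizable} once the parameters $\{\delta^j t_i : 1 \le i \le 3,\ 0 \le j \le n_i - 1\} = \{t_1, t_2, t_3, \delta t_3, \dots, \delta^{n-1} t_3\}$ are checked to be pairwise distinct, which is immediate since $\delta > 1$ makes $t_3, \delta t_3, \dots, \delta^{n-1} t_3$ a strictly monotone geometric progression of negative numbers, and $t_1 \ne t_2$ while one checks $t_1, t_2$ avoid that progression using the same polynomial substitutions as above.
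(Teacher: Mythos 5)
Your strategy is the same as the paper's: specialize Theorem \ref{thm:realizable} to the data $1,1,n,(123)$, observe $t_i<0$ to get $p_i^\pm\prec p_{fix}$, verify the equality and the two strict orderings by clearing the negative denominator $Q=1-\delta^{n+2}$ and using $\chi_n(\delta)=0$, and finally get existence from the converse half of Theorem \ref{thm:realizable} once the parameters are seen to be distinct. Your values of $t_1,t_2,t_3$ are the correct direct specialization (they are the values actually used in the paper's subsequent identities), and your verification that $f_C(p_2^+)=f_C^3(p_3^+)$ amounts to $t_1=\delta^2 t_2$, i.e.\ $\chi_n(\delta)=0$, is right.

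There is, however, a genuine error in the parameter you assign to $p_1^+$. By Proposition \ref{prop:realizable} the blown-up orbit is $p_{3,j}=\check f^j(E_3^+)=f_C^{\,j-1}(p_3^-)$, so its terminal point $p_1^+=p_{3,n}$ equals $f_C^{\,n-1}(p_3^-)$ and has parameter $\delta^{n-1}t_3$, not $\delta^{n}t_3$ (Theorem \ref{thm:realizable} records this as $p_{\sigma(i)}^+=\tilde\gamma(\delta^{n_i-1}t_i)$). The discrepancy is not harmless: with $\delta^n t_3$ the left-hand ordering $f_C^2(p_2^+)\prec p_1^+$ becomes $\delta^2 t_1<\delta^n t_3$, and after clearing $Q<0$ and substituting $\delta^{n+3}=\delta^{n+1}+\delta^{n}-\delta^3-\delta^2+1$ from $\chi_n(\delta)=0$, it collapses to $1>\delta^3$, which is false; numerically for $n=8$ (where $\delta\approx 1.176$) one has $\delta^2 t_1\approx -3.05$ while $\delta^{n}t_3\approx -3.20<\delta^2t_1$. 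So the plan, executed as written, fails exactly at the step you describe as routine bookkeeping, and the asserted conclusion that both inequalities ``collapse to statements which hold since $\delta>1$'' is wrong for the left one. With the corrected exponent everything goes through and reproduces the paper's computation: $Q(\delta^2 t_1-\delta^{n-1}t_3)=(\delta^3-1)(\delta^{n-1}-1)>0$ and $Q(\delta^{n-1}t_3-\delta t_1)=\delta^{-1}(\delta^3-1)>0$, giving $f_C^2(p_2^+)\prec p_1^+\prec f_C(p_2^+)$; the distinctness of $t_1$, $t_2$, $\delta^j t_3$ needed for the converse part of Theorem \ref{thm:realizable} then follows from these orderings (divide repeatedly by $\delta$ to sandwich $t_1=\delta^2 t_2$ strictly between consecutive terms of the geometric progression $\delta^j t_3$), which is how the paper concludes.
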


\begin{proof} Suppose for the moment that $\check f$ is the realization we seek.
Theorem \ref{thm:realizable} tells us that the parametrization $\tilde \gamma$ of $C$ can be adjusted so that $p_{fix} = \tilde\gamma(0)$ and the indeterminate points of $\check f^{-1}$ are given by $p_i^- = \tilde\gamma(t_i)$, where
\begin{equation*}
\begin{aligned}
t_1 \ &=\ Q^{-1}(1+\delta + \delta^{n+1}) \\
t_2 \ &=\ Q^{-1}(1+\delta^n+ \delta^{n+1}) \\
t_3 \ &=\ Q^{-1}(1+\delta + \delta^2), \\
\end{aligned}
\end{equation*}
for $Q= 1 - \delta^{n+2} <0$.  In particular $t_i<0$ for each $i$.  Since $p_{fix} = \tilde\gamma(0)$, we see that $p_i^- \prec p_{fix}$.   Since $p_{\sigma(i)}^+ = \tilde\gamma(\delta^{n_i-1}t_i)$, we see that $p_i^+ \prec p_{fix}$ for each $i$, too.  

The formulas for the parameters $t_i$ also yield 
\begin{equation*}
\begin{aligned}
& Q(\delta^2 t_3^+ - t_2^+)\ =\  \delta^n (\delta^3-\delta-1) + \delta^3+ \delta^2-1 =0,\\
&Q(t_1^+ - \delta t_2^+) \,\ =\  - \delta^n (\delta^3-\delta-1) - \delta ^2 =  \delta^3-1 > 0 \\
&Q(\delta^2 t_2^+ - t_1^+) \ =\ \delta^n (\delta^3-\delta-1) + \delta^{n-1} (\delta^3-1) + \delta^2 = (\delta^3-1) ( \delta^{n-1} - 1) >0  
\end{aligned}
\end{equation*}
where $t_{\sigma(i)}^+ = \delta^{n_i-1}t_i$. That is, $f_C^2(p_3^+) = p_2^+$ and $f_C^2(p_2^+) \prec p_1^+ \prec f_C(p_2^+)$, as asserted.

Finally, note that these computations show that the parameters $t_1$, $t_2$ and $\delta^j t_3$ are distinct for all $j\in\N$.  Hence the last part of Theorem \ref{thm:realizable} ensures the existence of the realization $\check f$ we have so far taken for granted.
\end{proof}

Lemma \ref{lem:order} and the fact that $f_C$ preserves order $\prec$ along $C$ implies that the (extended) critical orbits of $f$ are ordered as shown in Figure \ref{fig:11norder}.  From the figure, Theorem \ref{thm:realaction}, and Proposition \ref{prop:classofrealline}, we can easily deduce the action $\cg{(f_\R)_*}$.

\begin{figure}
\centering
\def\svgwidth{6.5truein}
  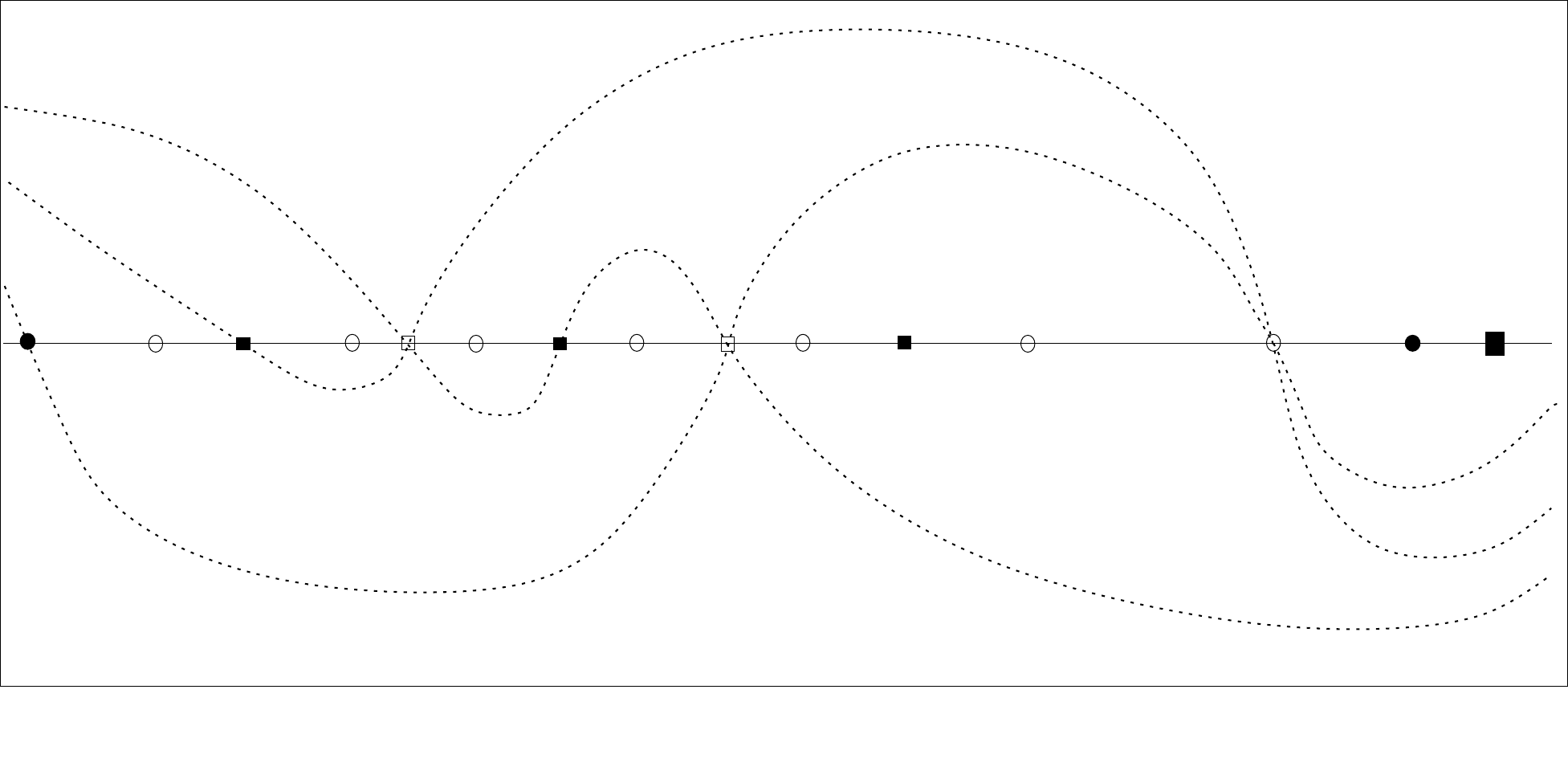
\caption{Critical orbits for the basic map realizing orbit data $n_1=n_2 = 1,n_3 = n, \sigma =(123)$.  The center horizontal line represents the cuspidal cubic $C$.  Dashed curves are the real lines $e_i^- = \check f(p_i^+)$, $i=1,2,3$.  Hollow circles/squares are points blown up to obtain the surface $X$.  Unlabeled solid circles and rectangles represent the images $f_C(p_i^+)$ and $f_C^{-1}(p_i^-)$ along $C$ of forward and backward indeterminate points. \label{fig:11norder}}
\end{figure}

\begin{cor}
\label{cor:coxeteraction}
The action $f_{\R *}:H^1(X;\R)\to H^1(X;\R)$ is given by
\begin{eqnarray*}
e_{1,1} & \mapsto & e_2^- \sim e_{2,1}+e_{3,2}-\dots +e_{3,n-2} - e_{3,n-1} + e_{3,n},\\ 
e_{2,1} & \mapsto & -e_3^- \sim e_{3,1}+\dots + e_{3,n-4} -e_{3,n-3}+e_{3,n-2}- e_{3,n-1}-e_{3,n},\\
e_{3,n} & \mapsto & -e_1^- \sim -e_{3,2} - \dots - e_{3,n-4} + e_{1,1}+ e_{3,n-3}+ \dots +e_{3,n}.\\
e_{3,j} & \mapsto & 
\left\{\begin{array}{rcl} e_{3,j+1} & \text{ if } j=n-2 \text{ or } j=n-4 \\
                          -e_{3,j+1} & \text{ for all other } 1\leq j \leq n_3-1
       \end{array}\right.
\end{eqnarray*}
\end{cor}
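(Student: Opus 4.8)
The statement is obtained by combining three earlier results: Lemma \ref{lem:order} (together with Figure \ref{fig:11norder}), which fixes the linear order of the critical and indeterminate points of $\check f$ along $C(\R)$; Theorem \ref{thm:realaction}, which gives the signs with which $(f_\R)_*$ permutes the exceptional circles; and Proposition \ref{prop:classofrealline}, which rewrites the strict transform of each real line $e_i^-$ in the exceptional basis $e_{1,1},e_{2,1},e_{3,1},\dots,e_{3,n}$ of $H_1(X(\R);\Q)$. The plan is to run these three inputs in order and then substitute. First I would record the complete left-to-right order along $C(\R)$ of every point that enters the two theorems: the long critical orbit, which by Lemma \ref{lem:order} and the fact that $f_C$ preserves $\prec$ appears \emph{reversed}, $p_{3,n}\prec p_{3,n-1}\prec\cdots\prec p_{3,1}\prec\fp$; the two remaining blown-up points $p_{1,1}=p_1^-=p_2^+$ and $p_{2,1}=p_2^-=p_3^+$; and the indeterminate and critical points of $\check f$ and of $\check f^{-1}$ lying on $C$, namely the $p_i^\pm$ together with the $f_C^{-1}(p_i^-)$ and the $f_C(p_i^+)$. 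All of these positions are forced by the explicit parameter values in Lemma \ref{lem:order} and the relation $f_C(\tilde\gamma(t))=\tilde\gamma(\delta t)$; in particular $p_{1,1}$ lies strictly between $p_{3,n-1}$ and $p_{3,n-2}$, $p_{2,1}$ lies strictly between $p_{3,n-3}$ and $p_{3,n-4}$, and the critical point $f_C^{-1}(p_3^-)$ sits between $p_{3,1}$ and $\fp$, to the right of everything else of interest.

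With this order in hand I would apply Theorem \ref{thm:realaction}. Because $n_1=n_2=1$, the action is forced to take the form $(f_\R)_*e_{1,1}=\pm e_2^-$, $(f_\R)_*e_{2,1}=\pm e_3^-$, $(f_\R)_*e_{3,j}=\pm e_{3,j+1}$ for $1\le j\le n-1$, and $(f_\R)_*e_{3,n}=\pm e_1^-$, and each sign is the parity of the number of indeterminate/critical points preceding a prescribed point of $C$. Reading these parities off the order from the first step yields the sign $+$ only for $e_{1,1}\mapsto e_2^-$ and, along the long orbit, only for $j=n-2$ and $j=n-4$; these two exceptional indices are exactly the $j$ for which $p_{3,j}$ has an even number of the six indeterminate and critical points of $\check f$ to its left, and that is precisely where the positions of $p_{1,1}$ and $p_{2,1}$ enter. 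Then I would apply Proposition \ref{prop:classofrealline} to each $e_i^-$, which meets $C$ in the two blown-up points $p_j^-$ with $j\ne i$ and in one further point $f_C(p_i^+)$; counting for each exceptional circle how many of these three intersections precede it produces the displayed expansions of $e_1^-,e_2^-,e_3^-$. Substituting those into the permutation formula gives the corollary.

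There is no conceptual obstacle here: the whole content is sign bookkeeping, which is why I have stated the intermediate steps so concretely. The part I would check most carefully is the last one --- correctly counting, for each $j$, how many of the three points of $e_i^-\cap C$ lie to the left of $p_{3,j}$, so as to reproduce the alternating pattern of $+$ and $-$, and then verifying that this pattern is consistent with the two exceptional indices $j=n-2,n-4$ produced in the previous step. (The sign error one is most likely to make is to forget that $p_{3,1}$ is the \emph{rightmost}, not the leftmost, of the long orbit.) It is also worth making explicit that these expansions rely on the orientation conventions fixed in Section \ref{sec:homology} --- real lines oriented left to right, exceptional circles oriented clockwise --- since reversing either would flip an overall sign.
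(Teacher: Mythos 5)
Your proposal is correct and follows essentially the same route as the paper: it combines the ordering of the critical orbits from Lemma \ref{lem:order} (as displayed in Figure \ref{fig:11norder}), the sign criteria of Theorem \ref{thm:realaction}, and the expansion of the lines $e_i^-$ via Proposition \ref{prop:classofrealline}, which is exactly how the paper derives the corollary. Your intermediate bookkeeping (the reversed order of the long orbit, the placement of $p_{1,1}$ and $p_{2,1}$, and the even parities occurring only at $j=n-2,n-4$) agrees with the paper's figure and computation.
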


\begin{proof}
Since, for instance, $n_1 = 1$ and $\sigma(1)=2$, we have that $p_{1,1} = p_2^+$, and therefore $\cg{(f_\R)_*} e_{1,1} = \pm e_2^-$.
The third item in Theorem \ref{thm:realaction} tells us that the sign is determined by how many of the three points $f_C(p_1^+), f_C(p_3^+), p_2^-\in \qu{C(\R)} \setminus e_2^-$ precede $f_C(p_2^+) = f_C(p_{1,1})$.  In Figure \ref{fig:11norder}, these are the intersections of $e_1^-,e_3^-$ with $\qu{C(\R)}$ away from $e_3^-$.  Precisely one of them $f_C(p_1^+)$ precedes $f_C(p_{1,1})$ (the leftmost point in $e_2^-\cap \qu{C(\R)}$), so Theorem \ref{thm:realaction} tells us $\cg{(f_\R)_*} e_{1,1} = e_2^-$.  From Figure \ref{fig:11norder} again and Proposition \ref{prop:classofrealline} it is further evident that $e_2^-  \sim e_{2,1}+e_{3,2}+\dots +e_{3,n-2} - e_{3,n-1} + e_{3,n}$.  This completes our computation of $\cg{(f_\R)_*} e_{1,1}$.
The images of $e_{2,1}$ and $e_{3,1}$ are computed in the same way.

To find $\cg{(f_\R)_*} e_{3,j}$ for $1\leq j\leq n-1$, we note from Figure \ref{fig:11norder} that there are an even number of critical/indeterminate points in $\qu{C(\R)}$ preceding $p_{3,j}$ unless $j=n-2$ or $j=n-4$.  So the second item in Theorem \ref{thm:realaction} implies the formula for $\cg{(f_\R)_*} e_{3,j}$ given in the present corollary.  
\end{proof}

%

From Corollary \ref{cor:coxeteraction} one can write down the matrix for \chg{$(f_\R)_*$ relative to the generators $e_{i,j}$ for $H_1(X(\R);\R)$ and compute the characteristic polynomial for $(f_\R)_*$ directly from that.  An important point, here and elsewhere, is that regardless of $n$, the matrix is triangular outside of three columns (those corresponding to $e_{1,1}$, $e_{2,1}$, and $e_{3,n}$).  Hence the same few row operations suffice in all cases to put the matrix for $f_* - t \,\id$ in diagonal form outside these three columns. Comparing with \eqref{eqn:poly11n}, one then verifies that the characteristic polynomial of $\cg{(f_\R)_*}:H_1(X(\R);\R)\to H_1(X(\R);\R)$ is equal to $(t-1)^{-1}\chi_n(t)$.}  In particular, the spectral radius $\rho(\cg{(f_\R)_*})$ \chg{is the} same as that of the action $f_*:H_2(X;\R)\to H_2(X;\R)$.  It follows from the entropy bounds of Yomdin and Gromov that $h_{top}(f_\R) = h_{top}(f) = \log\rho(f_*)$, i.e. Theorem {\maximal } holds for Coxeter orbit data.

\subsection{The second case of Theorem {\maximal}} The orbit data $\sigma=(1\,2\,3)\ n_1=2, n_2= n_3=n\ge 4$ can be dealt with in the same fashion, so we only summarize.  The characteristic polynomial for this orbit data is given by
$$
(t-1) (t^2+1) (t^n+1)^2 - t^{2 n+2}+1
$$
Again taking $\delta>1$ to be the largest root, and letting $p_{fix} \in \qu{C(\R)}$ be the unique finite fixed point of $f_C$, one deduces the following analogue of Lemma \ref{lem:order}:

\begin{lem} There exists a basic map $\check f$ that properly fixes $C$ and realizes the orbit data $2,n,n,(123)$ with determinant $\delta$.  If $p_{fix}$ is the unique (finite) fixed point of $f_C$, then $p_i^\pm \prec p_{fix}$ for all $i=1,2,3$.  Moreover, 
$$
f_C(p_2^+) \prec p_1^+ \prec p_3^+ \prec p_2^+.
$$
\end{lem}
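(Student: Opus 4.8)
The plan is to follow the proof of Lemma~\ref{lem:order} almost line for line, with the orbit data $2,n,n,(123)$ in place of $1,1,n,(123)$. First I would assume provisionally that a realization $\check f$ exists and invoke the first half of Theorem~\ref{thm:realizable}: after the affine reparametrization $\tilde\gamma(t)=\gamma(\alpha t+\beta)$ of $C$ furnished there, one has $p_{fix}=\tilde\gamma(0)$, the restriction acts by $f_C(\tilde\gamma(t))=\tilde\gamma(\delta t)$, and the indeterminate points of $\check f^{-1}$ become $p_i^-=\tilde\gamma(t_i)$, where the $t_i$ are the rational functions of $\delta$ that the cyclic-case formula of that theorem produces for $n_1=2,\ n_2=n_3=n$. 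Writing $Q=1-\delta^{2n+2}$, these have the shape (up to the labelling convention, which plays no role here) $t_1=Q^{-1}(1+\delta^n+\delta^{2n})$, $t_2=Q^{-1}(1+\delta^2+\delta^{n+2})$, $t_3=Q^{-1}(1+\delta^n+\delta^{n+2})$. Since $\delta>1$ we have $Q<0$ while each numerator is positive, so every $t_i<0$; and because $p_{\sigma(i)}^+=f_C^{n_i-1}(p_i^-)=\tilde\gamma(\delta^{n_i-1}t_i)$ with $\delta^{n_i-1}>0$, the forward indeterminate points have negative parameters as well. This already yields $p_i^\pm\prec p_{fix}$ for all $i$.

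The substantive point is the ordering $f_C(p_2^+)\prec p_1^+\prec p_3^+\prec p_2^+$. Using $n_1=2$ we have $p_2^+=\tilde\gamma(\delta t_1)$, hence $f_C(p_2^+)=\tilde\gamma(\delta^2 t_1)$, while $p_1^+=\tilde\gamma(\delta^{n-1}t_3)$ and $p_3^+=\tilde\gamma(\delta^{n-1}t_2)$; so the assertion is exactly the chain $\delta^2 t_1<\delta^{n-1}t_3<\delta^{n-1}t_2<\delta t_1$. I would clear the common factor $Q$---bearing in mind that this reverses each inequality, since $Q<0$---reducing everything to polynomial inequalities in $\delta$. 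After the evident cancellations the second reduces to $\delta^n>\delta^2$ and the third to $\delta^{n-1}>\delta$, both immediate from $\delta>1$ and $n\ge4$.

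The one step that is not purely formal---and the one I expect to be the main obstacle---is the first inequality $\delta^2 t_1<\delta^{n-1}t_3$, which after clearing $Q$ takes the form
$$
\delta^{2n+2}+\delta^{n+2}+\delta^2\;>\;\delta^{2n+1}+\delta^{2n-1}+\delta^{n-1}.
$$
Here I would bring in the fact that $\delta$ is a root of the characteristic polynomial \eqref{eqn:cycliccharpoly}, in the convenient form $(\delta^2+1)(\delta^n+1)^2=\delta+\delta^2+\cdots+\delta^{2n+1}$, which follows by dividing $\delta^{2n+2}-\delta=(\delta-1)(\delta^2+1)(\delta^n+1)^2$ by $\delta-1$. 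Expanding the left side and solving for $\delta^{2n+2}$, I substitute into the display above; after cancellation the inequality collapses to a comparison of the form $\sum_{k\in S}\delta^k>\delta^n+1$ with $S$ an explicit subset of $\{1,\dots,2n-2\}$, and this holds for every $n\ge4$---in the extreme case $n=4$ one has $S=\{1,2,5\}$ and uses $\delta^5+\delta^2>\delta^4+1$---simply because $\delta>1$. The only real care needed is the bookkeeping of which powers of $\delta$ survive the cancellation near the exponent $k\approx n$, where several of the subtracted monomials coincide with terms of the geometric series.

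Finally, these same formulas, together with the strict decrease of $j\mapsto\delta^j t_i$ along each critical orbit (clear since $t_i<0$ and $\delta>1$), let one verify by further comparisons of exactly the same elementary type that the parameters $\{\delta^j t_i:1\le i\le3,\ 0\le j\le n_i-1\}$ are pairwise distinct---the hypothesis required by the converse half of Theorem~\ref{thm:realizable}. That theorem then yields the basic real map $\check f$ that properly fixes $C$ and realizes $2,n,n,(123)$ with determinant $\delta$, discharging the provisional assumption; the resulting arrangement of the critical orbits along $C$ is what feeds the subsequent application of Theorem~\ref{thm:realaction}.
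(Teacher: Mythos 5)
Your argument is correct and is essentially the paper's own: the paper only summarizes this case, saying it is handled "in the same fashion" as Lemma \ref{lem:order}, and your proposal carries out exactly that template — the parameters $t_i$ from Theorem \ref{thm:realizable}, the sign of $Q$, the reduction of the second and third inequalities to $\delta^n>\delta^2$ and $\delta^{n-1}>\delta$, the use of the relation $(\delta^2+1)(\delta^n+1)^2=\delta+\cdots+\delta^{2n+1}$ from \eqref{eqn:cycliccharpoly} for the first inequality, and distinctness of parameters to invoke the converse half of Theorem \ref{thm:realizable} — all of which checks out (note you correctly used \eqref{eqn:cycliccharpoly}; the displayed specialization in that subsection of the paper has a harmless typo, $+1$ in place of $+t$).
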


This Lemma allows one to work out the action of $\cg{(f_\R)_*}$ on generators of $H_1(X(\R);\R)$, and a little further computation then shows as before that the characteristic polynomials of $\cg{(f_\R)_*}$ and of $f_*:H_2(X;\R)\to H_2(X;\R)$ differ by a factor of \chg{$t-1$}.
\qed

In the appendix to this paper, we give the analogues of Lemma \ref{lem:order} for \emph{all} possible orbit data with $\sigma = (123)$ cyclic.  This leads to a general expression for the characteristic polynomial for $\cg{(f_\R)_*}$ in the cyclic case.  We also comment on the remaining possibilities for the permutation: $\sigma = \id$ and $\sigma = (12)$ a transposition.

\section{Real maps with non-maximal entropy}
\label{sec:saddle}

 In this section, we prove Theorem \nonmaximal, relying on the following fact.  It was proven by Bedford, Lyubich and Smillie \cite{belysm} for polynomial automorphisms of $\C^2$, but later foundational work of Cantat \cite{can1}, DeThelin \cite{det} and Dujardin \cite{duj} allows one to easily adapt the proof to real automorphisms of compact complex surfaces.  

\begin{thm}[{\cite[Corollary~8.3]{can2}}]
\label{thm:allrealsaddles}
Let $f:X\to X$ be an automorphism on a blowup $X\to\cp^2$ of the complex projective plane.  If $f$ is real, then $h_{top}(f_\R) = h_{top}(f)$ if and only if all saddle periodic points of $f$ are contained in $X(\R)$ or in $f$-invariant algebraic curves. 
\end{thm}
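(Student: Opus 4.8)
The statement is essentially \cite[Corollary~8.3]{can2}, and the plan is to assemble it from the ingredients already recalled: uniqueness of the measure of maximal entropy $\mu$ (\cite{belysm,can1}), its hyperbolicity together with the equidistribution of saddle periodic orbits towards it (\cite{can1,det,duj}), the fact that $\mathrm{supp}\,\mu$ contains every saddle periodic point whose stable and unstable manifolds are both Zariski dense, and Newhouse's theorem \cite{new}.  First I would dispose of the case $\htop(f)=0$, where $\htop(f_\R)\le\htop(f)=0$ forces equality and the right-hand condition holds for free: a zero entropy automorphism of a rational surface is elliptic or parabolic (see \cite{can1}), and in either case its saddle periodic points, if any, lie on $f$-invariant algebraic curves.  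So assume $\htop(f)>0$ and fix the unique $\mu$.  Two auxiliary facts will be used.  First, $\mu$ charges no algebraic curve: otherwise ergodicity and $f$-invariance of $\mu$ would make it supported on the finite union of iterates of an $f$-periodic curve, forcing $h_\mu(f)=0$ since an automorphism of an algebraic curve has vanishing topological entropy.  Second, $f$ has only finitely many $f$-periodic irreducible curves (standard; see e.g.\ \cite{can1}), so their union is a single closed $f$-invariant algebraic curve $V$.

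For the forward implication, suppose $\htop(f_\R)=\htop(f)$.  Newhouse's theorem produces an $f_\R$-invariant probability $\nu$ on $X(\R)$ with $h_\nu(f_\R)=\htop(f_\R)$; viewing $\nu$ as an $f$-invariant measure on $X$ leaves its metric entropy unchanged, so $\nu$ is a measure of maximal entropy for $f$, and hence $\nu=\mu$ by uniqueness.  Thus $\mathrm{supp}\,\mu\subseteq X(\R)$.  Now let $p$ be any saddle periodic point of $f$.  If $W^s(p)$ fails to be Zariski dense, its Zariski closure is an algebraic curve with finite $f$-orbit, so $p$ lies on an $f$-invariant algebraic curve; the same holds if $W^u(p)$ fails to be Zariski dense.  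Otherwise both invariant manifolds are Zariski dense, and then $p\in\mathrm{supp}\,\mu\subseteq X(\R)$.  In every case $p$ lies in $X(\R)$ or on an $f$-invariant algebraic curve.

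For the reverse implication, suppose every saddle periodic point of $f$ lies in $X(\R)$ or on an $f$-invariant algebraic curve, hence in the closed set $X(\R)\cup V$.  Since $\mu$ is hyperbolic, ergodic and of positive entropy, its support is contained in the closure of the set of saddle periodic points of $f$ (equivalently, saddle periodic orbits equidistribute towards $\mu$), so $\mathrm{supp}\,\mu\subseteq X(\R)\cup V$.  As $\mu(V)=0$ and $X(\R)$ is closed, every neighbourhood of a point of $\mathrm{supp}\,\mu$ must meet $X(\R)$, and therefore $\mathrm{supp}\,\mu\subseteq X(\R)$.  Then $\mu$ restricts to an $f_\R$-invariant measure with $h_\mu(f_\R)=h_\mu(f)=\htop(f)$, so $\htop(f_\R)\ge\htop(f)$, and the reverse inequality of Yomdin and Gromov gives equality.

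The soft steps (uniqueness of $\mu$, Newhouse's theorem, the bookkeeping with invariant curves) are routine.  I expect the real obstacle to be the two deep inputs I am borrowing from \cite{can1,can2,det,duj}: that $\mu$ is hyperbolic with saddle periodic orbits equidistributing towards it, and the precise description of which saddle periodic points actually appear in $\mathrm{supp}\,\mu$ --- namely those whose stable \emph{and} unstable manifolds are Zariski dense, not merely those carrying a transverse homoclinic intersection, since a saddle point sitting on an invariant curve may well have the latter property without being seen by $\mu$.  It is this distinction that forces one into laminar currents and Pesin theory rather than a purely topological argument, and a genuinely self-contained proof would have to reproduce that analysis.
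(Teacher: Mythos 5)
Your proof is correct and takes essentially the same route as the paper, which offers no independent argument but cites \cite[Corollary~8.3]{can2} and notes that it rests on exactly the ingredients you use — uniqueness of the measure of maximal entropy, the fact that its support contains every saddle periodic point whose stable and unstable manifolds are not contained in algebraic curves, and Newhouse's theorem — the forward implication being precisely the mechanism sketched in the paper's introduction for Theorem B. The only soft spot is the one-sentence disposal of the zero-entropy case (for an elliptic automorphism of infinite order, placing saddle periodic points on invariant curves still requires a short argument), but that case is outside the positive-entropy setting in which the paper invokes the result.
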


The theorem proceeds in turn from the existence and uniqueness of a measure of maximal entropy for $f$ on $X$, and the additional property that support of the measure contains all the saddle periodic points of $f$ \chg{outside periodic curves}.  The survey \cite{can2} gives a good detailed account.  

Theorem \nonmaximal \ is an immediate consequence of Theorem \ref{thm:allrealsaddles} and

\begin{thm}
\label{thm:33n}
For every $n\geq 4$ and $\sigma = (123)$ cyclic, there exists a basic real map $\check f_n$, \chg{unique up to linear conjugacy}, that properly fixes $C$ with determinant $\delta > 1$ and realizes the orbit data $3,3,n,\sigma$.  The associated automorphism $f_n:X_n\to X_n$ has two complex fixed points in $X_n\setminus X_n(\R)$ but no invariant algebraic curves other than $C$.  When $n$ is large enough the complex fixed points are saddles.
\end{thm}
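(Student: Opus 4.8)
The plan is to construct $\check f_n$ with Theorem~\ref{thm:realizable}, rule out invariant curves other than $C$ using the fact that $C$ is anticanonical, count fixed points by the Lefschetz formula and locate those off $C$ from an explicit coordinate formula, and finally obtain the saddle property in the limit $n\to\infty$. For orbit data $3,3,n,(123)$ with $n\ge 4$ we have $n_1+n_2+n_3=n+6\ge 10$, so by Proposition~\ref{prop:charpoly} the characteristic polynomial $\chi(t)$ obtained by specializing \eqref{eqn:cycliccharpoly} has a unique real root $\delta>1$. The cyclic-permutation formula of Theorem~\ref{thm:realizable} then produces parameters $t_1,t_2,t_3$, and it remains, in order to apply the converse half of that theorem, to check that the $n+6$ numbers $\delta^j t_i$ with $1\le i\le 3$ and $0\le j\le n_i-1$ are pairwise distinct. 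As in the proof of Lemma~\ref{lem:order}, one clears denominators in each difference $\delta^a t_i-\delta^b t_j$, reduces modulo $\chi(\delta)=0$, and reads off a definite sign from $\delta>1$; this is routine. Distinctness yields the basic real map $\check f_n$, unique up to linear conjugacy, properly fixing $C$ with determinant $\delta$, and Proposition~\ref{prop:realizable} lifts it to $f_n:X_n\to X_n$.

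Suppose $V\subset X_n$ is an irreducible $f_n$-periodic curve with $V\neq C$, and let $W$ be the sum of the homology classes in its $f_n$-orbit, so that $f_{n*}[W]=[W]$. Since $\chi'(1)=9-(n+6)=3-n\neq 0$, the eigenvalue $1$ of $f_{n*}$ is simple; as $[C]=-K_{X_n}$ is a nonzero class fixed by $f_{n*}$, the line $\ker(f_{n*}-\id)$ is spanned by $[C]$, so $[W]=c\,[C]$ with $c>0$ (both classes being effective and nonzero). But then $W\cdot C=c\,C^2=c\,K_{X_n}^2=c(3-n)<0$, which is impossible because every component of $W$ is an irreducible curve distinct from $C$. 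Hence $C$ is the only $f_n$-periodic, and in particular the only $f_n$-invariant, curve in $X_n$.

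Because $n_1,n_2,n_3>1$, the only nonzero diagonal entry of $f_{n*}$ in the basis $\{L\}\cup\{E_{i,j}\}$ of $H_2(X_n;\R)$ is the $2$ coming from $L\mapsto 2L-\sum_i E_{i,1}$ in \eqref{h2action}; hence $\operatorname{tr} f_{n*}=2$, and since the characteristic polynomial of $f_{n*}$ is reciprocal this also equals $\operatorname{tr}\bigl(f_n^*|_{H^2(X_n)}\bigr)$, so the Lefschetz fixed point formula gives $1+2+1=4$ fixed points counted with multiplicity. Two of these are the cusp of $C$ and the finite fixed point $p_{fix}$ of $f_C$, and the cusp is nondegenerate with multipliers $\delta^{-3},\delta^{-2}$ (cf.\ the proof of Proposition~\ref{prop:orientation}). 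Using the explicit parametrization of basic real maps properly fixing $C$ from \cite{bediki}, we write $\check f_n$ in affine coordinates as a function of $\delta$ and $n$, divide the two known solutions on $C$ out of the fixed-point equations, and find that, after eliminating one coordinate, the remaining fixed points are cut out by a quadratic equation; one checks that $p_{fix}$ is nondegenerate and that the discriminant of this quadratic — an explicit function of $\delta$ and $n$ — is negative for every $n\ge 4$, so that the two remaining fixed points form a non-real conjugate pair $q_n,\bar q_n\in X_n\setminus X_n(\R)$.

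Finally, since $\check f_n^*\omega=\delta\,\omega$ for the meromorphic two-form $\omega$ on $X_n$ with $\operatorname{div}\omega=-C$, and $\omega$ is holomorphic and nonvanishing off $C$, we have $\det Df_{n,q_n}=\delta$; thus the eigenvalues $\lambda_\pm$ of $Df_{n,q_n}$ satisfy $\lambda_+\lambda_-=\delta>1$, and $q_n$ is a saddle precisely when $\min(|\lambda_+|,|\lambda_-|)<1$. By Proposition~\ref{prop:charpoly}, $\delta=\delta_n$ increases with $n$ and stays bounded, hence converges to the largest root $\delta_\infty$ of the polynomial obtained from \eqref{eqn:cycliccharpoly} by dividing by $t^n$ and letting $n\to\infty$; moreover the parameters $t_i$ of Theorem~\ref{thm:realizable} remain bounded and converge to distinct limits in $C_{reg}$, so $\check f_n$ converges coefficientwise to a basic real map $\check f_\infty$ that again properly fixes $C$, with determinant $\delta_\infty$. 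After passing to a subsequence, $q_n\to q_\infty$, and one checks from the formula that $q_\infty\notin C$; then $q_\infty$ is a fixed point of $\check f_\infty$ off $C$ with $\det Df_{\infty,q_\infty}=\delta_\infty$ and $\operatorname{tr} Df_{n,q_n}\to\tau_\infty:=\operatorname{tr} Df_{\infty,q_\infty}$. Computing $\tau_\infty$ and $\delta_\infty$ explicitly, one verifies that the two roots of $\lambda^2-\tau_\infty\lambda+\delta_\infty$ have moduli on opposite sides of $1$; by continuity of the eigenvalues of $Df_{n,q_n}$, the same is true for all $n$ sufficiently large, so $q_n$ and $\bar q_n$ are saddles. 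This proves Theorem~\ref{thm:33n}, and Theorem~\nonmaximal\ follows from Theorem~\ref{thm:allrealsaddles}: the complex saddles $q_n,\bar q_n$ lie neither in $X_n(\R)$ nor in the only invariant curve $C$, so (using also the inequalities of Theorems~\ref{thm:yomdin} and \ref{thm:gromov}) $h_{top}((f_n)_\R)<h_{top}(f_n)=\log\delta_n>0$. The conceptual steps above are short; the genuine labor, and the step I expect to be the main obstacle, is the explicit computation with the coordinate formula for $\check f_n$ — keeping the discriminant negative for all $n\ge 4$, and determining $\tau_\infty$ and $\delta_\infty$ precisely enough to conclude that the limiting fixed point is hyperbolic of saddle type — because the algebraic dependence on the root $\delta$ has to be disentangled from the explicit dependence on $n$.
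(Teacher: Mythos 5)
Your proposal is correct and takes essentially the same route as the paper: realization via Theorem \ref{thm:realizable} and Proposition \ref{prop:charpoly}, exclusion of invariant curves other than $C$ from the simplicity of the eigenvalue $1$ together with $C^2<0$, the Lefschetz count with $\operatorname{tr}f_{n*}=2$, the explicit formula of \cite{bediki} reducing the off-$C$ fixed points to a quadratic with negative discriminant, and the saddle property by evaluating the multiplier data at $\delta_\infty$ and invoking continuity for large $n$. The computations you defer (the sign of the discriminant for $\delta\in(1.43,1.69)$ and the limiting multiplier moduli $\approx 1.696$ and $0.993$) are precisely the ones the paper carries out, and they go through; note moreover that the coefficients of $\check f_n$ depend on $n$ only through $\delta$, so the ``disentangling'' of $\delta$ from $n$ that you flag as the main obstacle does not actually arise.
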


\begin{figure}
\centering
\def\svgwidth{6.5truein}
  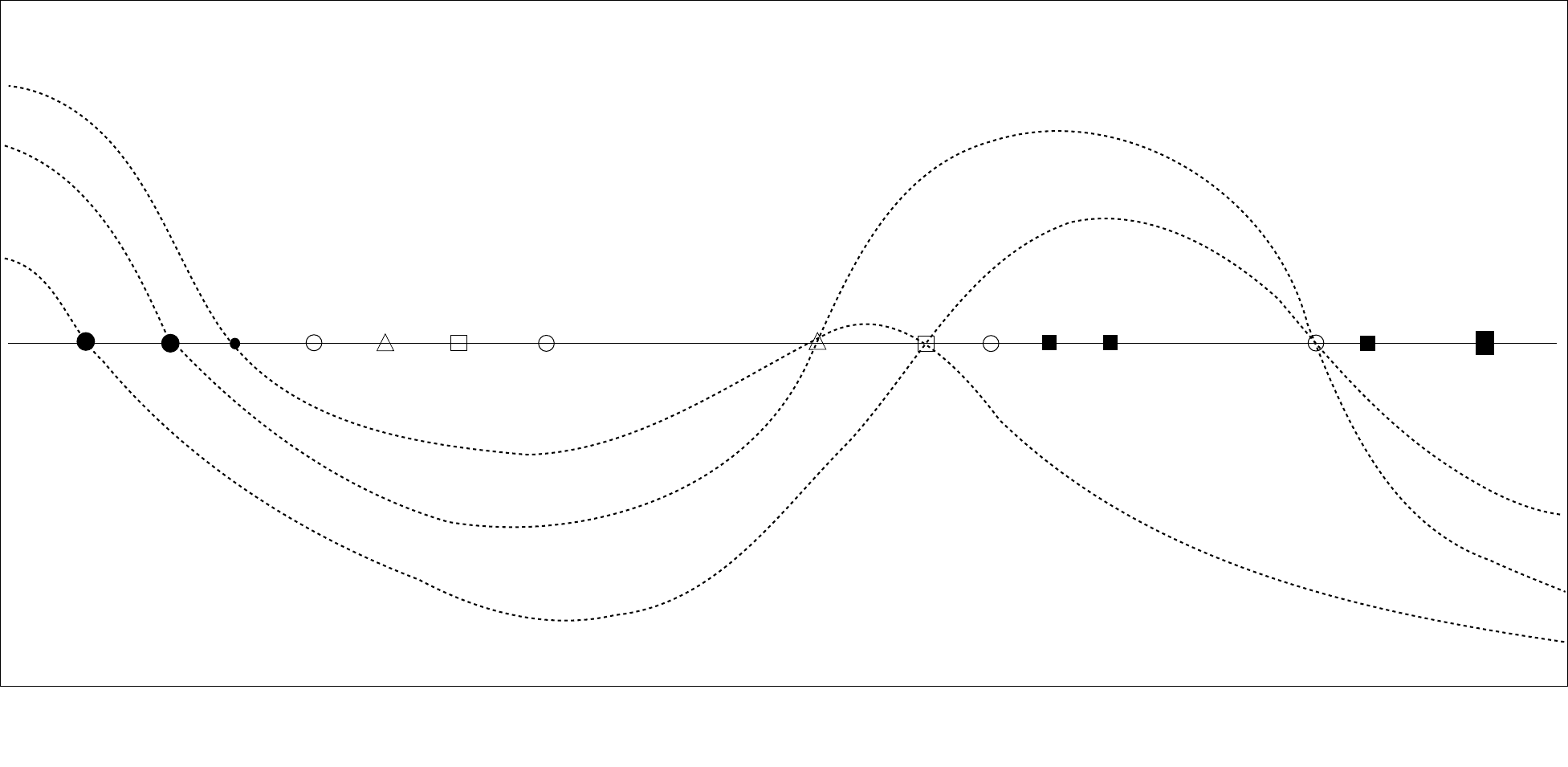
\caption{Critical orbits for the basic map realizing orbit data $n_1=n_2 = 3,n_3 = n, \sigma =(123)$.  The center horizontal line represents the cuspidal cubic $C$.  Dashed curves are the real lines $e_i^- = \check f(p_i^+)$, $i=1,2,3$.  Hollow circles/triangles/squares are points blown up to obtain the surface $X$.  Unlabeled solid circles and rectangles represent the the images $f_C(p_i^+)$ and $f_C^{-1}(p_i^-)$ along $C$ of forward and backward indeterminate points. \label{fig:33norder}}
\end{figure}

For the orbit data in the Theorem, the characteristic polynomial \eqref{eqn:cycliccharpoly} specializes to 
\begin{equation}
\label{E:cplxchar}
\chi_n(t) = h(t) - t^n \cdot t^7 h(1/t),
\quad{\text{where}}\quad
h(t) = t^7-t^6+2 t^4-2 t^3 + 2 t -1.
\end{equation}
Proposition \ref{prop:charpoly} tells us that when $n\ge 4$ this polynomial has a (necessarily unique) real root $\delta$ larger than $1$ and that this root increases with $n$.  From the formula for $\chi_n$ one sees that the limiting value of $\delta$ as $n\to \infty$ is the largest real root of $t^7h(1/t)$.  The monotonicity assertion in Proposition \ref{prop:charpoly} therefore implies that the $\delta$ lies between the largest roots of $\chi_4$ and of $t^7 h(1/t)$, i.e. by finding these roots numerically, $\delta \in (1.431,1.684)$ for all $n\in\N$. 

Let us note further that $t=1$ is always a simple root of $\chi_n$, since $\chi_n(1) = h(1) -h(1) = 0$ and $\chi_n'(1) = 2h'(1) - (n+7)h(1) = 3-n \leq -1$.

The existence of the basic map $\check f_n$ in Theorem \ref{thm:33n} is proved along the same lines as it was for Coxeter orbit data in Lemma \ref{lem:order}.  This time the ordering of the critical orbits is determined by the inequalities
$$
f_{n,C}(p_3^+) \prec p_1^+ \prec p_2^+ \prec p_3^+ \prec p_{fix}
$$
and is displayed in Figure \ref{fig:33norder}. From now on $f_n:X_n\to X_n$ will denote the complex surface automorphism obtained from $\check f$ by blowing up the critical orbits of $f_n$.

\begin{lem}
The only $f_n$ invariant algebraic curve is $C$.  Counting multiplicity, there are four fixed points for $f_n$, exactly two of which do not lie in $C$.
\end{lem}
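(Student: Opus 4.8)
The plan is to handle the two assertions separately, starting with the count of fixed points, since that computation will inform the argument ruling out other invariant curves. For the fixed point count, I would first use the Lefschetz fixed point formula on the complex surface $X_n$: since $f_n$ is an automorphism, the number of fixed points counted with multiplicity equals $\sum_k (-1)^k \operatorname{tr}(f_{n,*}|_{H_k(X_n;\R)})$, which because $H_0$ and $H_4$ contribute $1$ each and $H_1 = H_3 = 0$ reduces to $2 + \operatorname{tr}(f_{n,*}|_{H_2(X_n;\R)})$. From the explicit action \eqref{h2action} on the basis $L, E^{\pm}_i, E_{i,j}$ — keeping in mind the relation $E_i^- \sim L - \sum_{j\neq i} E_{j,1}$ — the trace is readily computed and is independent of $n$ (the cyclic permutation $\sigma$ contributes nothing on the diagonal, and only the $L\mapsto 2L-\cdots$ block and the finitely many identifications among the $E^{\pm}_i$ contribute), giving a total of four fixed points counted with multiplicity. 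The harder half is to locate them: two of them are the fixed points of $f_{n,C}$ on the invariant cubic $C$, namely the cusp $[0,1,0]$ and the point $p_{fix} = \tilde\gamma(0)$ furnished by Theorem \ref{thm:realizable}; I would then argue the remaining two fixed points lie off $C$ by a direct computation from the explicit formula for $\check f_n$ (available via \cite{bediki}), or alternatively by a parity/index argument: a fixed point on $C$ with $f_{n,C}$-multiplier $\delta$ or $\delta^{-1}$ contributes multiplicity $1$, so the two on-$C$ fixed points account for exactly two of the four, leaving two genuinely off $C$.

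For the claim that $C$ is the only invariant algebraic curve, the plan is to invoke the uniqueness of the anticanonical-type structure: $\check f_n$ properly fixes $C$, and by the determinant relation $f_n^*\omega = \delta\omega$ where $\omega$ has divisor $-C$, the curve $C$ is (up to the obvious identifications) the unique effective anticanonical divisor of $X_n$. Any $f_n$-invariant curve $D$ would have its class fixed (up to finite orbit) by $f_{n,*}$ on $H_2(X_n;\R)$; since $\delta > 1$ is a simple eigenvalue and the remaining spectrum on the relevant invariant subspace lies on the unit circle, the only classes with finite $f_{n,*}$-orbit that can be represented by effective curves are constrained enough that — combined with the adjunction/genus bound and the fact that $D$ would have to meet the critical orbit structure compatibly with the blowup data — one forces $D \subseteq C$. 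The cleanest route is the one already signposted in the introduction: "the presence of the invariant cubic essentially rules out other invariant curves." Concretely, I would show any other invariant curve would have to be disjoint from $C$ or contained in it; an invariant curve disjoint from $C$ lives in $X_n \setminus C$, an affine-like surface on which $f_n$ expands volume by the factor $\delta > 1$, which is incompatible with leaving a compact curve invariant.

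The step I expect to be the main obstacle is verifying that the two non-$C$ fixed points are genuinely new — i.e., that $p_{fix}$ and the cusp exhaust the fixed points lying on $C$, and that the other two solutions of the fixed-point equations have all their coordinates off the cubic. This is essentially a resultant/elimination computation in the explicit coordinates for $\check f_n = T^- \circ J \circ (T^+)^{-1}$, and the bookkeeping is delicate because $T^{\pm}$ depend on $\delta$ through the parameter formulas of Theorem \ref{thm:realizable}; one must check that no degeneration occurs for the relevant range $\delta \in (1.431, 1.684)$ and all $n\ge 4$. I would organize this by first writing the fixed-point equation as a system, factoring out the known roots on $C$ using the parametrization $\tilde\gamma$, and then analyzing the residual quadratic factor — its discriminant, as a function of $\delta$, is what ultimately decides reality, and showing it is negative (so the two remaining fixed points are a complex-conjugate pair off $X_n(\R)$) for the admissible $\delta$ is the crux; the transition to \emph{saddle} fixed points for large $n$ then follows by tracking the eigenvalues of $Df_n$ at this conjugate pair as $\delta$ ranges toward its limiting value.
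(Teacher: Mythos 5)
Your fixed-point count is fine and matches the paper: the Lefschetz formula gives $2+\mathop{\mathrm{tr}}\bigl(f_{n*}|_{H_2(X_n;\R)}\bigr)$, the trace is $2$ (only $L\mapsto 2L-\sum E_{i,1}$ contributes on the diagonal when all $n_i\geq 2$), and the two fixed points of $f_{n,C}$ are nondegenerate (their multipliers are powers of $\delta\neq 1$), so exactly two fixed points, counted with multiplicity, lie off $C$.

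The invariant-curve half, however, has a genuine gap. Your ``cleanest route'' has two unjustified steps. First, the claim that any invariant curve $V\neq C$ must be disjoint from $C$ is not established: $V\cap C$ need only be an invariant finite set, hence contained in the two fixed points of $f_{n,C}$, and nothing you say rules out an invariant algebraic curve passing through the cusp or through $p_{fix}$ (the argument the paper uses later for \emph{pointwise} periodic curves, via finite order of $f|_V$ versus infinite-order multipliers, does not apply to a merely invariant curve). Second, the volume-expansion argument does not work for curves: $f_n^*\omega=\delta\omega$ expands the real $4$-dimensional volume $\omega\wedge\bar\omega$ on $X_n\setminus C$, but a compact complex curve has zero volume and $\omega$ restricts to zero on it, so there is no contradiction with such a curve being invariant; the paper only uses this kind of integral argument over $2$-real-dimensional open subsets of $X(\R)$, where $\omega$ is an area form. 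Your first, homological sketch points in the right direction but focuses on the wrong eigenvalue and never closes: the relevant fact is not that $\delta>1$ is simple, but that $t=1$ is a \emph{simple} root of $\chi_n$ (from $\chi_n(1)=0$, $\chi_n'(1)=3-n\neq 0$, computed just before the lemma). Since $[C]$ is fixed by $f_{n*}$, the $1$-eigenspace is exactly $\R[C]$, so any invariant curve $V$ has $[V]$ proportional to $[C]$; then $C^2=9-N\leq -1$ (as $N=n+6\geq 10$) together with effectivity forces $V$ to be supported on $C$ (an irreducible $V\neq C$ would satisfy $V\cdot C\geq 0$, contradicting $V\cdot C=cC^2<0$). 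That is the argument you need, and it replaces both of your problematic steps.
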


\begin{proof}
The homology class of any $f_n$ invariant curve is also invariant, i.e. it is an eigenvector of $f_{n*}:H_2(X_n;\R)\to H_2(X_n;\R)$ with eigenvalue $1$. Let $V$ be an $f_n$ invariant algebraic curve.  Since $C$ is invariant and $1$ is a simple root of $\chi_n$, it follows that the homology class of $V$ is a multiple of the homology class of $C$.  However, the self-intersection of $C$ is $3^2 - N$, where $N$ is the number of points in $C$ that are blown up.  Since $N\geq 10$ under the hypotheses of Theorem \ref{thm:33n}, $C^2 <0$.  It follows that $V$ itself (i.e. as a divisor) must be a multiple of $C$.

To count fixed points, we appeal to the Lefschetz formula \chg{\cite[Chapter 3.4]{grha} which tells us for automorphisms of complex rational surfaces that the number of fixed points of $f$, counted with multiplicity,} is two more than the trace of \cg{$f_{n*}:H_2(X_n;\R)\to H_2(X_n;\R)$.}  This trace is $2$, i.e. the coefficient of $t$ in the polynomial $h$ in the formula for $\chi_n$.
Since \cg{$f_{n,C}$} has two fixed points \chg{and fixed points for holomorphic maps always have positive multiplicity}, this leaves two fixed points in the complement of $C$.
\end{proof}

Restricting $f_n$ to a diffeomorphism $f_{n,\R}$ on the real slice $\qu{X_n(\R)}$ of $X_n$, one computes the action on $f_{n,\R*}:H_1(X_{n}(\R);\R)\to H_1(X_{n}(\R);\R)$ as in Corollary \ref{cor:coxeteraction}.  Following the method of the previous section, one finds that the characteristic polynomial $ \phi_n(t)$ of $f_{n,\R*}$ is given by
\begin{equation}\label{E:realchar}(t+1) \phi_n(t) = g(t) + (-t)^n t^7 g(1/t)  
\qquad\text{where}\qquad
g(t) = 1- 2t^4+2t^5-t^6+t^7.
\end{equation}

Though it isn't necessary for the proof of Theorem \ref{thm:33n}, we include the following by way of contrast to Theorem \maximal.  

\begin{prop}
For $n\ge 4$, the spectral radius of the real homology action of $f_{n,\R}$ is strictly smaller than the dynamical degree of $f_n$. 
\end{prop}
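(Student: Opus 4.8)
The plan is to work directly with the explicit characteristic polynomials. Recall from \eqref{E:realchar} that $(t+1)\phi_n(t)=g(t)+(-t)^n\,t^7g(1/t)$, where $\phi_n$ is the characteristic polynomial of $f_{n,\R*}$ and $g(t)=1-2t^4+2t^5-t^6+t^7$; write $r(t):=t^7g(1/t)=t^7-2t^3+2t^2-t+1$ for the reversal of $g$. We must show $\rho(\phi_n)<\delta_n$, where the dynamical degree $\delta_n$ is, by \eqref{E:cplxchar}, the largest root of $\chi_n(t)=h(t)-t^n\,t^7h(1/t)$. Since the extra factor $t+1$ only contributes the eigenvalue $-1$, of modulus $1<\delta_n$, it suffices to prove that every root of $g(t)+(-t)^nr(t)$ has modulus strictly less than $\delta_n$.

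First I would identify the limiting distribution of roots. Both $\chi_n$ and $(t+1)\phi_n$ have the form $A(t)\pm t^n\widetilde A(t)$ with $\deg A=7$ and $\widetilde A(t):=t^7A(1/t)$ the reversal of $A$ (namely $A=h$ for $\chi_n$, and $A=g$, $\widetilde A=r$ for $(t+1)\phi_n$). On any circle $\{|t|=R\}$ with $R>1$ avoiding the finitely many root moduli of $\widetilde A$, one has $|t^{-n}A(t)|\to0$ uniformly, so $(A(t)\pm t^n\widetilde A(t))/(\pm t^n)\to\widetilde A(t)$; by Rouch\'e's theorem and Hurwitz's theorem the zeros of $A(t)\pm t^n\widetilde A(t)$ in $\{|t|>R\}$ converge, as $n\to\infty$, to those of $\widetilde A$ there. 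Applied to $\chi_n$ this recovers the fact, already noted in the text, that $\delta_n$ increases to $\delta_\infty$, the largest root of $t^7h(1/t)$, with $\delta_\infty<1.684$. Applied to $(t+1)\phi_n$ it gives $\rho(\phi_n)\to\rho(r)$. The essential numerical input is then $\rho(r)<\delta_\infty$. Using $g(t)=1+t^4(t-1)(t^2+2)$ (verified by expanding), observe that on the circle $|t|=s$ the factor $|t-1|$ attains its maximum $s+1$ uniquely at $t=-s$, while $|t^2+2|$ attains its maximum $s^2+2$ at $t=\pm s$; hence $|t^4(t-1)(t^2+2)|$ attains its maximum $s^4(s+1)(s^2+2)$ on $|t|=s$ only at $t=-s$. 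Consequently $g$ has no zero of modulus less than $s_*$, where $s_*$ is the unique positive root of $s^4(s+1)(s^2+2)=1$, and $g(-s_*)=0$; therefore $\rho(r)=1/s_*$. Since $0.69<s_*<0.70$, we get $1.42<\rho(r)<1.45<\delta_\infty$.

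With these numbers in hand I would fix a radius $R$ with $\rho(r)<R<\delta_\infty$ and choose $N_0$ so large that $\delta_{N_0}>R$ (possible since $\delta_n\nearrow\delta_\infty>R$) and $R^{N_0}\min_{|t|=R}|r(t)|>\max_{|t|=R}|g(t)|$ (possible since $r$ has no zero on $|t|=R$, so the minimum is positive). Then for every $n\ge N_0$, Rouch\'e's theorem on $\{|t|=R\}$ applied to $g+(-t)^nr$ and $(-t)^nr$ shows these have the same number of zeros in $\{|t|<R\}$, namely all $n+7$ of them (the $n$-fold zero at the origin together with the seven zeros of $r$, all of modulus $\le\rho(r)<R$); since $R<\delta_n$, this settles $n\ge N_0$. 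For the finitely many $n$ with $4\le n<N_0$ I would expand $(t+1)\phi_n$ from \eqref{E:realchar}, divide out $t+1$, and check $\rho(\phi_n)<\delta_n$ case by case; for the smallest values this is immediate because $\rho(\phi_n)$ barely exceeds $1$ — for instance $\phi_4(t)=t^{10}-t^9+t^8-t^7+t^5-t^3+t^2-t+1=\Phi_{22}(t)-t^4(t-1)^2$ is a small perturbation of the cyclotomic polynomial $\Phi_{22}$, so $\rho(\phi_4)$ is very close to $1$.

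The main obstacle is purely computational: one must locate $\rho(r)$ and $\delta_\infty$ accurately enough to choose $R$, pin down an explicit $N_0$, and dispatch the remaining small cases (each a concrete reciprocal integer polynomial, handled by a Sturm sequence or a direct root estimate). Conceptually, everything rests on the single comparison $\rho(r)<\delta_\infty=\sup_n\delta_n$ together with the monotonicity of $\delta_n$ recorded earlier; the rest is Rouch\'e's theorem plus a finite check. A fully uniform argument avoiding the finite check would require a Rouch\'e inequality directly on $\{|t|=\delta_n\}$, but that splitting is delicate precisely for small $n$, where $\delta_n$ is close to $1$ while $r$ has a root of modulus near $1/\delta_n$; isolating those finitely many cases is the cleaner route.
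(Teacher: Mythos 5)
Your proposal is correct and takes essentially the same route as the paper: a Rouch\'e comparison of $g(t)$ against $(-t)^n t^7 g(1/t)$ on a circle of radius strictly between $\rho\bigl(t^7g(1/t)\bigr)$ and $\delta_n$ for all large $n$, with the finitely many remaining small values of $n$ dispatched by direct numerical checks (the paper fixes $R=1.5$, obtains the Rouch\'e inequality for $n\ge 7$, and verifies $n=4,5,6$ numerically). Your exact determination $\rho\bigl(t^7g(1/t)\bigr)=1/s_*$ via the factorization $g(t)=1+t^4(t-1)(t^2+2)$ is a nice self-contained substitute for the paper's numerically quoted root moduli, but the overall structure of the argument is the same.
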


\begin{proof}
The proposition can be checked by direct numerical computation for $n=4$.  When $n=5$, one also verifies numerically that $\delta > 1.5$.  From Proposition \ref{prop:charpoly}, we infer that $\delta > 1.5$ for all $n>4$.  To finish the proof, we will show that all roots of $\phi_n$ lie inside the disk $B := \{z\in\C:|z|<1.5\}$

Again, for $n=5,6$ this can be verified numerically.  When $n\ge 7$, we resort to Rouche's Theorem.  For $t \in \partial B\cg{ =  \{z\in\C:|z|=1.5\}}$, we have  
\[ |g(t)| \le 1+ 2|t|^4+ 2|t|^5 + |t|^6+|t|^7 < 55\] and
\begin{equation*}
\begin{aligned}
 |(-t)^n t^7 g(1/t)| \ &= \ |t|^n |1-t+2 t^2 - 2 t^3+t^7|\\ 
 & > |t|^n ( |t|^7 -1-|t|-2 |t|^2-2 |t|^3) >  3.3 \cdot (1.5)^n.
\end{aligned}
\end{equation*}
Notice that $3.3 \times (1.5)^7 >56$. It follows that if $n\ge 7$, for $t \in \partial B$ we have
\[ |(t+1) \phi_n(t)-(-t)^n t^7 g(1/t)| =|g(t)| <  |(t+1) \phi_n(t)|+|(-t)^n t^7 g(1/t)| \ \ \ \ \ \ t \in \partial B.\]
\cg{The polynomial $t^7g(1/t)$ has one real root $\sim -1.4334$ and  the moduli of the non-real complex roots of $t^7g(1/t)$ are $0.719, 0.980$ and $1.185$. }Thus every root of $(-t)^n t^7 g(1/t)$ lies in $B$.  Since $\deg (t+1)\phi_n(t) =  \deg (-t)^n t^7 g(1/t) = n+7$, it then follows from Rouch\'{e}'s theorem that every root of $\phi_n(t)$ has modulus smaller than $1.5$. 
\end{proof}

\begin{rem}
Adapting the argument in Section 2 of \cite{grhimc}, one can show for all $n\ge 4$ that $\phi_n$ is separable\cg{, i.e all roots are simple,} and that the only possible cyclotomic factors of $\phi_n$ are $(t-1), (t^2+1)$ and $(t^6-t^3+1)$. It follows that $\phi_n$ has a root outside the unit circle for all $n\ge 4$. Hence by Yomdin's bound, $h_{top}(f_{n, \mathbf{R}})$ is strictly positive.
\end{rem}

\subsection{Complex Saddle Fixed Points}

To complete the proof of Theorem \ref{thm:33n}, it remains to show \chg{that the two} fixed points of $f_{n,\R}$ that do not lie on $C$ are actually saddle fixed points outside $\qu{X_n(\R)}$.  For this we resort to an explicit formula for $\check f_n$.  The method for obtaining the formula is explained in \cite{bediki}.  Here we record only the result.  Namely, up to real linear conjugacy, $\check f_n = L\circ \sigma$, where $\sigma$ is, as before, the standard quadratic involution and $L=(\ell_{ij}) \in \text{GL}(3, \mathbb{C})$ is given by
\begin{equation*}
\begin{aligned}
& \ell_{11} \,=\, -\delta^4 (1-\delta+\delta^2) (1-\delta+\delta^3) (1-\delta^2+\delta^4)\\
& \ell_{12} \,=\, (1+\delta) (1-\delta+\delta^3)^2 (1-\delta+\delta^3-\delta^4+\delta^5)\\
& \ell_{13} \,=\, (1-\delta+\delta^2)(1-\delta^2+\delta^3) (1-\delta^2+\delta^4) (-1+\delta-\delta^3+\delta^4-\delta^5-\delta^6+\delta^7)\\
& \ell_{21} \,=\, -\delta^4 (1-\delta+\delta^3) (1-\delta^2+\delta^3) (1-\delta^2+\delta^3-\delta^5+\delta^6)\\
& \ell_{22} \,=\, (1+\delta^5) (1-\delta+\delta^3-\delta^4+\delta^5) (1-\delta+2 \delta^3-\delta^4-\delta^5+\delta^6)\\
& \ell_{23} \,=\, (1-\delta^2+\delta^4)(1-\delta+\delta^2-\delta^4+\delta^5) (-1+\delta-2 \delta^3+ 2 \delta^4-2 \delta^6+\delta^7).\\
& \ell_{31} \,=\, -\delta^7 (1-\delta^2+\delta^3)^2 \\
& \ell_{32} \,=\, \delta^3  (1-\delta+\delta^3) (1+\delta^5) \\
& \ell_{33} \,=\, \delta^3 (1-\delta^2+\delta^3) (1-\delta+\delta^2-\delta^4+\delta^5) (-1+\delta^2-\delta^3-\delta^4+\delta^5).\\
\end{aligned}
\end{equation*} 

To proceed, let us note that the line at infinity is critical for $\sigma$ and therefore also for $f_n = L\circ \sigma$.  In particular $f_n$ has no fixed point on the line at infinity.  So we are safe working in affine coordinates $(x,y) \mapsto [x,y,1]$, writing $f_n = (F_x,F_y)$.

\begin{lem} \label{L:cplxFix}
For $n\ge 4$, \cg{$f_n$} has two fixed points outside $\cg{\cp^2(\R)}$.
\end{lem}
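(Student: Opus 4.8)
The plan is to reduce the claim to a finite computation involving only the explicit plane birational map, exploiting the observation that, although $X_n$ depends on $n$, the quadratic map $\check f_n = L\circ\sigma$ depends on $n$ only through the single parameter $\delta=\delta_n$, since the entries $\ell_{ij}$ recorded above are polynomials in $\delta$. Moreover the two fixed points we are after do not lie on $C$, hence are not among the points blown up to build $X_n$, so it suffices to analyze the fixed points of the plane map $\check f_n$ itself. Because the line at infinity is contracted by $\sigma$ (hence by $\check f_n$), no fixed point lies there, so we may work in affine coordinates: writing $\sigma[x:y:1]=[y:x:xy]$, a point $(x,y)$ is fixed by $\check f_n$ exactly when $L(y,x,xy)^{T}$ is proportional to $(x,y,1)^{T}$, which, after using the third coordinate to cancel the scalar, becomes the system
\begin{align*}
\ell_{11}y+\ell_{12}x+\ell_{13}xy &= x\,(\ell_{31}y+\ell_{32}x+\ell_{33}xy),\\
\ell_{21}y+\ell_{22}x+\ell_{23}xy &= y\,(\ell_{31}y+\ell_{32}x+\ell_{33}xy).
\end{align*}

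The first equation is affine in $y$, so I would solve it for $y$ as a ratio of polynomials in $x$ with quadratic denominator, substitute into the second equation, and clear denominators, obtaining a single polynomial $R(x)$ with coefficients polynomial in $\delta$ whose roots are the $x$-coordinates of the finite fixed points of $\check f_n$ (a short separate check handles the locus where the denominator vanishes and the indeterminacy points). By the preceding lemma there are four finite fixed points counted with multiplicity: two on $C$, namely $p_{fix}$ and the cusp of $C$ (which is finite in these coordinates, as it cannot lie on the contracted line at infinity yet must be fixed by $\check f_n$), and the two off $C$ that we want. Using Theorem~\ref{thm:realizable} together with the explicit equation of $C$ in the present coordinates, I would write down the real quadratic factor of $R(x)$ cut out by the two on-$C$ fixed points and divide it out (along with any spurious linear factor introduced by clearing denominators), leaving a quadratic $q(x)$ whose two roots are the $x$-coordinates of the off-$C$ fixed points. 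Since $L$ is real, it then suffices to show that the discriminant $D(\delta):=\operatorname{disc}_{x}q(x)$ is negative: this forces those two fixed points to be a complex-conjugate pair, hence to lie outside $\cp^2(\R)$, and to number exactly two.

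Finally $D$ is an explicit polynomial in $\delta$, and I would finish by verifying $D(\delta)<0$ for every admissible value of $\delta$. For this one uses the estimate obtained earlier that $\delta=\delta_n\in(1.431,1.684)$ for all $n\ge 4$, so the inequality only needs to hold on that interval; this can be confirmed by a Sturm sequence for $D$, or by presenting $D$ as a negative constant times a polynomial manifestly positive on $(1.431,1.684)$, or simply by bounding $D$ and $D'$ on the interval. I expect the real obstacle to be organizational rather than conceptual: performing the elimination cleanly, correctly identifying the on-$C$ quadratic factor so that $q(x)$ genuinely isolates the two off-$C$ fixed points, and massaging $D(\delta)$ into a form whose sign on the relevant interval is transparent.
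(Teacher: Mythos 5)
Your proposal takes essentially the same route as the paper's proof: work in affine coordinates, use the first fixed-point equation to solve for $y$ as an explicit rational function $\xi(x)$ of $x$ (with coefficients polynomial in $\delta$), substitute into the second equation to obtain a factorization of the form $(x-1)S(x)Q(x)/D(x)$ with $Q$ quadratic, and show that the discriminant of $Q$ is negative for all admissible $\delta$, forcing the two remaining fixed points to be a non-real conjugate pair. The only cosmetic difference is in the final sign check: rather than a Sturm sequence, the paper factors $B_1^2-4B_0B_2$ explicitly as a negative multiple of terms whose positivity on $1.4<\delta<1.7$ is immediate, using the bound $\delta\in(1.431,1.684)$ exactly as you propose.
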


\begin{proof} The point $(x,y)$ is fixed by $f_n$ if and only if $F_x(x,y)-x = 0 = F_y(x,y) - y$.  Using the above formulae for the coefficients of $L$, we find that the first equality reduces to $y=\xi(x)$ where  
$$
\xi(x) = - \frac{(\delta^5+1) x (N_0+N_1 x)}{D_0 + D_1 x+D_2 x^2}, 
$$
and the coefficients $N_i, D_i$ are the following integral polynomials in $\delta$:
\begin{equation*}
\begin{aligned}
&N_0 \ =\  - \delta^3(1-\delta+\delta^3)\\
& N_1 \ =\  (1-\delta+\delta^3-\delta^4+\delta^5)(1-\delta+2\delta^3-\delta^4-\delta^5+\delta^6)\\
& D_0 \ =\ \delta^7 (1-\delta^2+\delta^3)^2\\
&D_1\ = \ -\delta^3(\delta-1) (1-\delta^2+\delta^3) (1-\delta+3 \delta^3-2 \delta^4-\delta^5+4 \delta^6-2 \delta^7-\delta^8)\\
&D_2 \ =\  (1-\delta^2+\delta^4) (1-\delta+\delta^2-\delta^4+\delta^5)(-1+\delta-2 \delta^3+2 \delta^4-2 \delta^6+\delta^7). \\
\end{aligned}
\end{equation*}
So turning our attention to the second equality, we have that $(x,y)$ is a fixed point if and only if $y=\xi(x)$ and 
$$ 
0=F_y(x,y) - y \ = \ \frac{(x-1) S(x) Q(x) }{D(x)},
$$
where $D(x)$ is a polynomial with real coefficients, a linear function $S(x) = A_0 + A_1 x$ and a quadratic function $Q(x)= B_0 + B_1 x + B_2 x^2$ with
\begin{equation*}
\begin{aligned}
&B_0 \ =\  \delta^6(1-\delta+\delta^3) (1-\delta^2+\delta^3)\\
&B_1\ = \ -\delta^3(2-3\delta-2\delta^2+10 \delta^3-7\delta^4-7\delta^5+16 \delta^6-7\delta^7-7\delta^8+10\delta^9-2\delta^{10}-3\delta^{11}+2 \delta^{12})\\
&B_2 \ = \ (1-\delta^2+\delta^4)^2 (1-\delta+\delta^2-\delta^4+\delta^5) (1-\delta+\delta^3-\delta^4+\delta^5).
\end{aligned}
\end{equation*}
Hence
\begin{equation*}
\begin{aligned}
B_1^2 &- 4 B_0 B_2 \\= &\  -(\delta-1)^6 \delta^8 (\delta+1)^2 (1+\delta^3+\delta^6) (3-4 \delta - 4 \delta^2 + 11 \delta^3 - 4 \delta^4-4 \delta^5 + 3 \delta^6).
\end{aligned}
\end{equation*}
Notice that 
\begin{equation*}
\begin{aligned}
3-4 \delta - 4 \delta^2 + 11 \delta^3 &- 4 \delta^4-4 \delta^5 + 3 \delta^6 \\& =\  \delta^3 (5-\delta^2-1) + (\delta^3+1) [ 3 (\delta^2-1) (\delta-1) -1].
\end{aligned}
\end{equation*}
Since $1.4 < \delta<1.7$, we have
\[ \delta^2+1 <5, \quad 3 (\delta^2-1) (\delta-1) > 3\times (1.4^2-1) (1.4-1) = 1.152 \]
and therefore $B_1^2 - 4 B_0 B_2 <0$. Thus $Q(x)$ has two complex roots and $f_n$ has two complex fixed points.
\end{proof}

The proof of Theorem \ref{thm:33n} is now concluded by

\begin{lem}\label{lem:saddles}
For sufficiently large $n$, the two complex fixed points of $f_n$ are saddle. 
\end{lem}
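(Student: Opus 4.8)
The plan is to reduce the saddle property to an \emph{open} condition on the trace of $Df_n$ at the fixed points, and then to verify that condition in the limit $n\to\infty$, where $\delta_n$ increases to the largest real root $\delta_\infty$ of $t^7h(1/t)$.

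First I would record that at either complex fixed point $p=p_n^\pm$ of Lemma \ref{L:cplxFix} one has $\det Df_n(p)=\delta_n$. Indeed $p$ lies off $C$, so the meromorphic two-form $\omega$ on $X_n$ with $\operatorname{div}(\omega)=-C$ restricts to a nonvanishing holomorphic volume form near $p$; writing $f_n^*\omega=\delta_n\,\omega$ in a local chart and evaluating at $p$ gives $\det Df_n(p)=\delta_n>1$. Hence the eigenvalues $\lambda_1,\lambda_2$ of $Df_n(p)$ satisfy $\lambda_1\lambda_2=\delta_n$, so $\max_i|\lambda_i|>1$, and $p$ is a saddle precisely when $\min_i|\lambda_i|<1$, i.e. when the characteristic polynomial $\lambda^2-\tau_n\lambda+\delta_n$ of $Df_n(p)$ — with $\tau_n:=\operatorname{tr}Df_n(p)$ — has a root of modulus strictly less than $1$. (Should $\tau_n$ be real, this is simply $|\tau_n|>1+\delta_n$.) The key point is that ``having a root in the open unit disk'' is an open condition on the pair $(\tau_n,\delta_n)\in\C\times\R$.

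Next I would pass to the limit $n\to\infty$. The entries $\ell_{ij}$ of $L$ in $\check f_n=L\circ\sigma$, the coefficients $B_0,B_1,B_2$ of the quadratic $Q$, and the rational function $\xi$ all depend polynomially, resp. rationally, on $\delta$; and by Proposition \ref{prop:charpoly} $\delta_n$ increases with $n$ to $\delta_\infty$, the largest real root of $t^7h(1/t)$ (equivalently of $t^7-2t^6+2t^4-2t^3+t-1$), so $\delta_\infty\approx1.684$. Consequently $f_n$ converges in $C^1$ on a fixed neighbourhood of $p_\infty^\pm$ to the quadratic birational map $f_\infty:=L_\infty\circ\sigma$ obtained by substituting $\delta=\delta_\infty$; the two off-$C$ fixed points $p_n^\pm$ — the roots of $Q(x)=0$ together with $y=\xi(x)$ — converge to the corresponding fixed points $p_\infty^\pm$ of $f_\infty$; and therefore $\tau_n^\pm=\operatorname{tr}Df_n(p_n^\pm)\to\tau_\infty^\pm:=\operatorname{tr}Df_\infty(p_\infty^\pm)$. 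For this one must confirm that $p_\infty^\pm$ lies off the indeterminacy and critical locus of $\sigma$, namely the three coordinate lines and the line at infinity. This follows from the explicit formulas for $B_i(\delta_\infty)$ — by the sign computation in Lemma \ref{L:cplxFix}, $B_2\neq0$ and $B_1^2-4B_0B_2<0$ there — together with the formula for $\xi$, which place $p_\infty^\pm$ in the affine chart with both coordinates nonzero.

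Finally I would carry out the limiting computation: substitute $\delta=\delta_\infty$ into $L$, $Q$ and $\xi$, solve $Q(x)=0$, set $p_\infty^+=(x,\xi(x))$, compute $\tau_\infty^+$, and check that $\lambda^2-\tau_\infty^+\lambda+\delta_\infty$ has a root of modulus strictly less than $1$ — hence, its root-product being $\delta_\infty$, the other root of modulus strictly greater than $1$. Since $p_n^-=\overline{p_n^+}$ and $\tau_n^-=\overline{\tau_n^+}$, the same conclusion holds at $p_\infty^-$. Because the condition ``has a root in the open unit disk'' is open and $(\tau_n^\pm,\delta_n)\to(\tau_\infty^\pm,\delta_\infty)$, both complex fixed points of $f_n$ are saddles for all sufficiently large $n$, which is the assertion of Lemma \ref{lem:saddles} and completes the proof of Theorem \ref{thm:33n}. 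The main obstacle is this last step: $p_\infty^\pm$ are quadratic irrationalities over $\Q(\delta_\infty)$ and $Df_\infty$ is a cumbersome rational expression, so both the value of $\tau_\infty^+$ and the verification of the eigenvalue inequality must be pushed through with a computer algebra system; a secondary point requiring care is the $C^1$-convergence $f_n\to f_\infty$ and the continuity of the fixed-point multipliers, which rests precisely on the check above that $p_\infty^\pm$ stays away from the ($n$-independent) indeterminacy and critical loci.
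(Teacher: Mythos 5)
Your proposal is correct and follows essentially the same route as the paper: both use $f_n^*\omega$ to get $\mu_1\mu_2=\delta_n$, reduce the saddle condition to the trace of $Df_n$ at the two complex fixed points, evaluate the limiting situation at $\delta_\infty\approx 1.684$ (where the multiplier moduli are $\approx 1.6956$ and $\approx 0.9929$), and conclude by continuity/openness for large $n$. The only cosmetic difference is that the paper computes the trace symbolically as an explicit function of $\delta$ (so continuity in $\delta$ is immediate), whereas you package the same continuity via $C^1$-convergence of $f_n$ to $f_\infty$ and of the fixed points, with the (correct) check that the limit points avoid the critical and indeterminacy loci.
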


\begin{center}
     \includegraphics[width=0.4\textwidth]{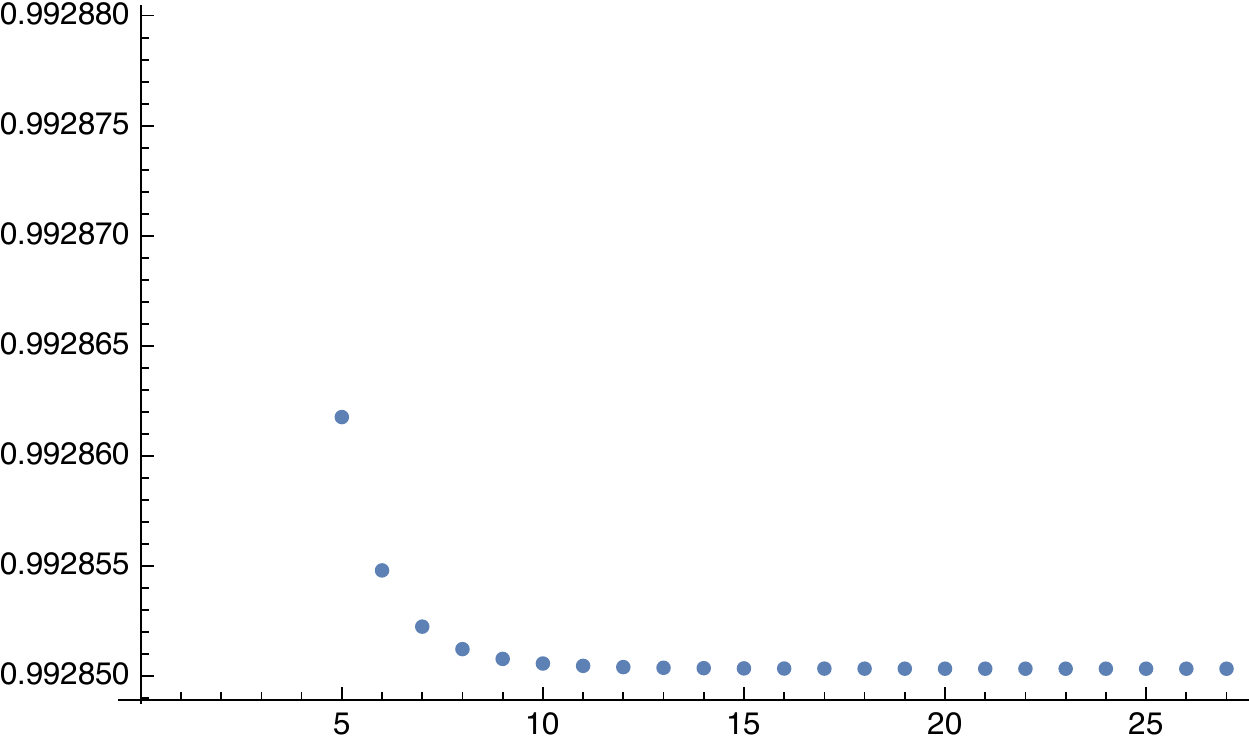}
  \captionof{figure}{The modulus of the smaller multiplier vs $n$. \label{figure:multi} }
\end{center}

\begin{rem} It seems plausible that Lemma \ref{lem:saddles}, and therefore also Theorem, \ref{thm:33n} work for \emph{all} $n\geq 4$.  When $n$ is small, one can confirm numerically that the complex fixed points of $f_n$ are saddles.  For example if $n=4$, the two multipliers have modulus $\approx 1.43903, 0.994417 $ and if $n=5$, the two multipliers have modulus $\approx 1.56666, 0.993212$.  In fact, we observe that the modulus of the smaller multiplier is decreasing in $n$ for $4\le n \le 30$ (see Figure \ref{figure:multi}) and seems to approach $0.99285\dots$ very quickly as $n$ grows. Since the product of the two multipliers is $\delta$, the modulus of the larger multiplier easily exceeds $1$. 
\end{rem}

\begin{rem} 
One might hope that the holomorphic version of the Lefschetz fixed point formula \cite[Chapter 3.4]{grha}   
$$
\sum_{f(p) = p}\frac{1}{\det(\id - Df(p))} = 1
$$
would be useful here and for proving Lemma \ref{lem:twocycles} below in a more conceptual fashion.  That is, the multipliers at the two fixed points on $C(\R)$ are known (see \cite[Lemma 5.2]{ueh2}), and the product of the multipliers at each of the other two fixed points must equal $\delta$.  This leaves us to determine only one multiplier at each of the latter.  The holomorphic fixed point formula gives us one additional relation between these mutlipliers which is not sufficient to identify them completely.
\end{rem}

\begin{proof}[Proof of Lemma \ref{lem:saddles}]
Since $f_n^*\omega = \delta^{-1}\omega$, where $\omega$ is the meromorphic two form with a simple pole along $C$, and since the complex fixed points of $f_n$ do not lie on $C$, we have that the multipliers $\mu_1,\mu_2$ of $Df_n$ at either of these points satisfy $\mu_1\mu_2 = \delta$.

After a (long) computation using explicit formulae for entries of $L$, we find that the multipliers also satisfy
\begin{equation*}
\mu_1+ \mu_2 \ = \ -1- \zeta(\delta) + \eta(\delta) i
\end{equation*}
where 
\begin{equation*}
\begin{aligned}
& \zeta(\delta) = (\delta-1) (1-\delta+\delta^5+2 \delta^6-5 \delta^7+2 \delta^8\\
&\phantom{ \zeta(\delta) = (\delta-1) (1-\delta} +5 \delta^9-8\delta^{10}+4 \delta^{11}+2 \delta^{12} - 3 \delta^{13} + \delta^{14})/D_*\\
&\eta(\delta) = (\delta-1) (1-\delta+\delta^3-\delta^4+\delta^5-\delta^7+\delta^8) \times\\
&\phantom{ \zeta(\delta) = (\delta-1) (1-\delta} \times \sqrt{(1+\delta^3+\delta^6) (3-4 \delta - 4 \delta^2 + 11 \delta^3 - 4 \delta^4-4 \delta^5 + 3 \delta^6)}/D_*\\
&D_* = 2(1-\delta^2+\delta^4)(1-\delta+\delta^2-\delta^4+\delta^5)(1-\delta+\delta^3-\delta^4+\delta^5).
\end{aligned}
\end{equation*}
It follows that \chg{the two multipliers} are roots of quadratic equation
\[ t^2 +(1 +\zeta(\delta) - \eta(\delta) i ) t+ \delta=0.\]
Equivalently two multipliers are $x$-coordinates of intersection points of two curves in $\mathbb{A}^2(\C)$, $C_1 = \{ y= x^2+x+1\}$ and $C_2 = \{ y = (-\zeta(\delta) +\eta(\delta) i ) x + 1-\delta\}$. As noted above $\delta > 1$ increases to the largest real root $\delta_\infty\approx 1.68384$ of $t^7h(1/t)$ as $n\to\infty$.  For $\delta = \delta_\infty$, the intersections between $C_1$ and $C_2$ have modulus $\approx 1.69559, 0.99285$.  Since the intersections between $C_1$ and $C_2$ vary continuously with $\delta$, it follows that the $|\mu_1|<1<|\mu_2|$ for $n$ large enough.  That is, the complex fixed points are saddles.
\end{proof}

\section{Automorphisms with all periodic cycles real}
\label{sec:reallyreal}

\newcommand{\nfix}{\#\operatorname{\mathrm{Fix}}}

In the appendix to this paper, we identify some more sets of orbit data that lead to real rational surface automorphisms with maximal (and positive) entropy.  It is known (see e.g. \cite[Theorem 8.2]{can2}) that nearly all isolated periodic points have saddle type for a positive entropy complex surface automorphism $f:X\to X$.  Together with Theorem \ref{thm:allrealsaddles}, this implies that when $f$ is real and $h_{top}(f_\R) = h_{top}(f)$, nearly all periodic points of $f$ lie in $X(\R)$.  The referee for this paper asked whether it can happen that \emph{all} periodic points of $f$ are real.  This section is devoted to answering that question.

\begin{thm} 
\label{thm:245id}
The orbit data with critical orbit lengths $2,4,5$ and permutation $\sigma=\id$ is realized by a basic real map $\check f:\cp^2\to\cp^2$ that fixes $C$ with determinant $\delta >1$.  \chg{All periodic points for the corresponding rational surface autmorphism $f:X\to X$ are isolated and real, i.e. contained in $X(\R)$.  All have saddle type, except for the cusp of $C$ which is attracting.}
\end{thm}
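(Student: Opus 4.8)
\emph{Proof plan.} The strategy is to realize the orbit data via Theorem~\ref{thm:realizable}, confirm exactly as in Theorem~\maximal\ that $f_\R$ has maximal entropy, locate the periodic points that lie on $C$, and then compare the Lefschetz fixed point counts of $f^k$ on $X$ and on $X(\R)$, showing that the two counts agree so that every complex periodic point is forced to be real. Concretely, one first checks that the characteristic polynomial attached to the orbit data $2,4,5,\id$ has a real root $\delta>1$ (cf.\ Proposition~\ref{prop:charpoly}; here $n_1+n_2+n_3=11$). Feeding this $\delta$ into Theorem~\ref{thm:realizable}, where for $\sigma=\id$ the parameters of the backward indeterminate points are $t_i=1/(1-\delta^{n_i})$, a direct estimate shows that the numbers $\{\delta^{j}t_i:0\le j\le n_i-1,\ 1\le i\le 3\}$ are pairwise distinct, so Theorem~\ref{thm:realizable} yields the basic real map $\check f$, unique up to linear conjugacy, that properly fixes $C$ with determinant $\delta$; the induced automorphism $f:X\to X$ then has $h_{top}(f)=\log\delta>0$ by Gromov's theorem. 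Proceeding as in the Coxeter case of Theorem~\maximal\ (and as carried out for all cyclic orbit data in the appendix), one records the ordering along $C$ of the critical orbits of $\check f$ --- the analogue of Lemma~\ref{lem:order} --- and uses Theorem~\ref{thm:realaction} together with Proposition~\ref{prop:classofrealline} to write down $(f_\R)_*$ on $H_1(X(\R);\R)$ and verify that its spectral radius is again $\delta$. By Yomdin's inequality, $h_{top}(f_\R)=\log\delta=h_{top}(f)$.

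Next one analyzes the periodic points on $C$. In the coordinates used in the proof of Proposition~\ref{prop:orientation} the differential of $\check f$ at the cusp is $\operatorname{diag}(\delta^{-3},\delta^{-2})$, so since $\delta>1$ the cusp is an attracting fixed point. Since $f_C$ is the affine map $\gamma(x)\mapsto\gamma(\delta x+\tau)$ on $C_{reg}$, the iterate $f_C^k$ has for every $k$ the single fixed point $\fp\in C_{reg}$; together with the cusp these are the only periodic points of $f$ on $C$, and using the explicit formula for $\check f$ available as in Section~\ref{sec:saddle} (or the multiplier formula of \cite{ueh2}) one checks that $\fp$ is a saddle. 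Now the matrix of $f_*$ on $H_2(X;\R)$ is given explicitly by \eqref{h2action}, so the Lefschetz fixed point formula shows that $f^k$ has exactly $N_k:=2+\operatorname{tr}(f_*^k)$ fixed points counted with multiplicity. Because $\check f$ properly fixes the anticanonical curve $C$, one has $\det Df^k(p)=\delta^{\pm k}\ne1$ at every fixed point $p\notin C$; and since $C^2=9-11<0$ while $1$ is a simple root of the characteristic polynomial of $f_*$, the only $f$-invariant curve is $C$, so the fixed-point set of each $f^k$ is finite. It therefore suffices to prove that $f^k$ has \emph{at least} $N_k$ fixed points inside $X(\R)$ for every $k$: since $X(\R)\subset X$ and $N_k$ is an upper bound for the total number of complex period-$k$ points, this forces every period-$k$ point of $f$ to be real and simple.

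To produce real fixed points one passes to the orientation double cover $\hat\pi:\hat X\to X(\R)$, an orientable closed surface with fixed-point-free deck involution $\iota$, and lifts $f_\R$ to $\hat f$. Over a fixed point $p$ of $f_\R^k$ the two preimages are fixed by $\hat f^k$ if $Df_\R^k(p)$ preserves the local orientation and are interchanged otherwise, and in either case the local Lefschetz index upstairs equals $\operatorname{sign}\det(I-Df_\R^k(p))$. Applying the Lefschetz--Hopf theorem on $\hat X$, using the decomposition $H^1(\hat X;\R)=H^1(X(\R);\R)\oplus H^1(X(\R);\mathcal{L})$ into the $(\pm1)$-eigenspaces of $\iota^*$ (with $\mathcal{L}$ the orientation local system and $\iota^*=-\id$ on $H^2(\hat X;\R)$), and inserting the action of $f_\R$ on $H^1(X(\R);\R)$ (the transpose of the $(f_\R)_*$ found above) along with its action on the twisted group $H^1(X(\R);\mathcal{L})$ --- computed by an orientation-twisted version of Proposition~\ref{prop:classofrealline} --- one compares characteristic polynomials and finds that the real Lefschetz count $L_\R(f^k):=\sum_{p}\operatorname{sign}\det(I-Df_\R^k(p))$ equals $N_k$ for every $k$. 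Since $\det Df^k\ne1$ off $C$ and the eigenvalue in the $C$-direction at the two fixed points on $C$ is a power of $\delta$, in no case are both eigenvalues of $Df^k(p)$ equal to $1$, so for the surface diffeomorphism $f_\R^k$ the local index at each fixed point lies in $\{-1,0,1\}$; hence $f_\R^k$ has at least $L_\R(f^k)=N_k$ fixed points, completing the count. Consequently every periodic point of $f$ is real, isolated, and simple. Finally, a real fixed point is a complex saddle exactly when it is a real saddle (its differential being defined over $\R$), and since only finitely many periodic orbits of a positive-entropy automorphism fail to be saddles \cite{can2} --- all now seen to be real --- one checks, using the explicit formula for $\check f$ and the absence of invariant curves other than $C$, that the only such exception is the attracting cusp (in particular $\fp$ is itself a saddle, and there are no spiral sinks or sources off $C$). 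Thus all periodic points of $f$ have saddle type except the attracting cusp.

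The crux of the argument is the comparison in the previous paragraph: one must compute the action of $f_\R$ on the orientation-twisted cohomology $H^1(X(\R);\mathcal{L})$ --- equivalently, the $(-1)$-eigenpart of $\hat f^*$ on $H^1(\hat X)$ --- which calls for a signed count of real lines meeting $C$, now carried out on $\hat X$ in the spirit of Proposition~\ref{prop:classofrealline}; and one must then reconcile, uniformly in $k$, the twisted Lefschetz numbers (which tally local indices $\le1$) against the honest complex count $N_k$ (which tallies positive multiplicities). A secondary subtlety is the last step, identifying the exceptional non-saddle periodic points precisely as the single attracting cusp, which requires the explicit formula for $\check f$ together with the rigidity forced by $C$ being the only invariant curve.
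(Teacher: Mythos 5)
Your overall architecture (realize the data, match the real and complex homology growth, then compare Lefschetz counts on $X$ and on the orientation cover $\hat X$) is the right one, but two key steps fail as stated. First, your finiteness argument rests on the claim that $1$ is a simple root of the characteristic polynomial of $f_*$; for the orbit data $2,4,5,\id$ this is false, since $\chi(t)=(1-t)^3s(t)$ with $s(t)=t^8-t^5-t^4-t^3+1$, so the eigenvalue-$1$ subspace is three-dimensional and one cannot conclude that $C$ is the only invariant curve, nor that pointwise $f^k$-fixed curves are excluded, by the quick homology argument that worked in the $3,3,n$ case. Ruling out a pointwise periodic curve $V$ here genuinely requires more: one shows $V\cap C=\emptyset$ via the multipliers at the two fixed points on $C$, uses the genus formula to make $V$ a smooth rational $(-2)$-curve, forces $f(V)=V$ and a real fixed point on $V$, and then derives a contradiction from an $f_\R$-invariant circle $S\subset V\cap X(\R)$ bounding a region $U$ disjoint from $C(\R)$ with $\int_U\omega=\delta\int_U\omega$. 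None of this is in your proposal, and the step you substitute for it is based on a false premise.

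Second, the central identity you want, $L_\R(f^k)=N_k$, is not true, and the comparison cannot be closed the way you describe. With $\chi_\R(t)=(t^2+1)s(t)$ and the characteristic polynomial of $\hat f_*$ on $H_1(\hat X;\R)$ equal to $\chi_\R(t)^2$, the Lefschetz formula on $\hat X$ gives, for $4\mid n$,
$$
\sum_{f_\R^n(p)=p}\mu(f_\R^n,p) \;=\; -1-\sum_{s(t)=0}t^n ,
$$
which is \emph{negative} and differs from $N_n=5+\sum_{s(t)=0}t^n$; so "at least $L_\R=N_k$ real fixed points" is unavailable, and even the cruder bound by $|L_\R|=N_n-4$ leaves a deficit of $4$. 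Closing that gap requires ingredients you omit: the inequality $|\mu(f_\R^n,p)|\le\mu(f^n,p)$ between real and complex indices (Eisenbud--Levine), the explicit determination (via the formula for $\check f$) that exactly two fixed points --- the attracting cusp and a repelling point off $C$ --- have real index $+1$, so that $\nfix^+(f_\R^n)=2$, and the resulting equality $\nfix^+ + \nfix^- = \nfix(f^n)$, which is what simultaneously forces all period-$n$ points to be real, non-degenerate, and (apart from those two points) saddles with $\mu(f_\R^n,p)=-\mu(f^n,p)$. Note also that the cyclotomic factor $t^2+1$ obliges one to run the count only for $n\equiv 0\pmod 4$ and then observe that every period-$n$ point is period-$4n$, rather than "for every $k$" as you assert; and your plan to compute the twisted part $H^1(X(\R);\mathcal L)$ by a twisted version of Proposition \ref{prop:classofrealline} is left entirely open, whereas it can be bypassed by the intersection-form duality on $\hat X$ (the $\pm1$-eigenspaces of $\iota_*$ are isotropic and dual, so $\hat f_*$ on $H_-$ is conjugate to $\hat f_*^{-1}$ on $H_+$, and reciprocity of $\chi_\R$ gives the square). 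Finally, your last step deducing "the only non-saddle is the cusp" from finiteness of non-saddle orbits plus "one checks" is circular as written: the saddle property of all higher-period points is exactly what the index bookkeeping above delivers, not an input you can assume.
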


For the remainder of this section, we fix the orbit data to be $2,4,5,\id$ as in this theorem.  Certainly some of the arguments we will use apply more generally, but we want to keep the focus on the example at hand.  The general strategy is to first rule out non-isolated fixed points for the automorphism $f:X\to X$ associated to this orbit data and then apply the Lefschetz fixed point formula \cite[Theorem 2.8.6.2]{kaha} to count and compare the numbers of fixed points for $f^n$ and $f_\R^n$.  Since the real surface $X(\R)$ is not orientable, we must first lift everything to the orientation cover $\hat\pi:\hat X \to X(\R)$ to count real fixed points.  We are grateful to Eric Bedford who described this strategy to us.  

The characteristic polynomial for the orbit data $2,4,5,\id$ \chg{can be obtained} by specializing \eqref{eqn:idcharpoly} in the appendix to this paper:
$$
\chi(t) = (1-t)^3s(t),
$$
where $s(t) = t^8-t^5-t^4-t^3+1$ has largest real root $\delta \approx 1.28064 > 1$.  

It follows from Theorem \ref{thm:realizable} and the fourth row in Table \ref{table:id} that the orbit data is realized by a real basic map $\check f:\cp^2\to\cp^2$ fixing the cuspidal cubic $C$ with determinant $\delta>1$.  We let $f:X\to X$ be the rational surface automorphism obtained by blowing up all the critical orbits of $\check f$ and continue to use $C$ to denote the anticanonical curve in $X$ obtained as the proper transform of the cuspidal cubic.  
As noted in Section \ref{sec:basic}, there is a real meromorphic two form $\omega$ on $X$ with a simple pole along $C$ and no other zeroes or poles, and $\omega$ transforms by $f$ according to $f^*\omega = \delta \omega$.

Specializing \eqref{eqn:reciprocal} and \eqref{eqn:phiid}, one finds that $(f_\R)_*:H_1(X(\R);\R)\to H_1(X(\R);\R)$ has characteristic polynomial
$$
\chi_\R(t) = (1+t^2)s(t),
$$
where $s(t)$ is as before.  Hence the characteristic polynomials for $f_*$ and $(f_\R)_*$ are the same up to cyclotomic factors, and it follows as in the proof of Theorem { \maximal} that $h_{top}(f_\R) = h_{top}(f) = \log \delta > 0$.  

Using the methods from \cite{bediki}, one can compute an explicit formula for the basic map $\check f$.  With this formula and some help from Mathematica, one locates the fixed points of $f$. Two fixed points on the invariant cubic $C$ are  $[1,1,1], [2.1003,1.2806,1]$ and two fixed points on the complement of $C$ are $[0.040129,1.2806,1],[-0.29031,0.37179,1]$.

\begin{lem}
\label{lem:twocycles}
The fixed points of $f$ consist of
\begin{itemize}
 \item an attracting fixed point at the cusp of $C$, with multipliers (i.e. eigenvalues of the derivative $Df$ at the cusp) $\delta^{-2}$ and $\delta^{-3}$;
 \item a saddle fixed point in $C_{reg}\cap X(\R)$ with multipliers $\delta$ and $\delta^{-9}$; and
 \item two fixed points in $X(\R)\setminus C$, one repelling and the other of saddle type. 
\end{itemize}
In particular, all fixed points of $f$ are real.
\end{lem}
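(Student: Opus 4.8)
The plan is to locate every fixed point of $f$ explicitly and then confirm, via a homological count, that nothing has been missed. Using the recipe of \cite{bediki} one obtains an explicit real $L\in\aut(\cp^2)$ with $\check f=L\circ J$; this is legitimate because, by Theorem \ref{thm:realizable} together with the distinctness of the relevant parameters recorded in the appendix, the orbit data $2,4,5,\id$ is realized with $C$ properly fixed and determinant $\delta\approx 1.28$. Under $f$ each exceptional divisor of $X\to\cp^2$ is carried onto another exceptional divisor or onto the strict transform of one of the lines $E^-_i$, and in either case onto a curve disjoint from it; hence $f$ has no fixed point lying on an exceptional divisor, so every fixed point of $f$ lies over $\cp^2$ and is a fixed point of $\check f$ distinct from the (non-fixed) blown-up points. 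The fixed points of $\check f$ on $C$ are exactly the cusp and $\fp\in C_{reg}\cap\mathbb{A}^2(\R)$, the latter by Theorem \ref{thm:realizable}; any others lie off $C$ and are found by solving the polynomial system $\check f(x,y)=(x,y)$ in affine coordinates, exactly as in the proof of Lemma \ref{L:cplxFix}.

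Next I would rule out non-isolated fixed points and count. A curve of fixed points would be $f$-invariant, hence of class a genuine eigenvector of $f_*$ for the eigenvalue $1$; by the classification of invariant curves for quadratic maps properly fixing $C$ (\cite{dil}; alternatively, a direct check that the $1$-eigenspace of $f_*$, built from \eqref{h2action}, is spanned by the anticanonical class $[C]$, together with the fact that $h_{top}(f)=\log\delta>0$ excludes an invariant fibration and hence any isotropic fixed class), the only candidate is $C$ itself, and $\check f|_C=f_C$ has no curve of fixed points. Since moreover $C^2=9-11<0$, no effective class is a positive multiple of $[C]$ supported off $C$. Thus $\#\mathrm{Fix}(f)$ is finite, and the Lefschetz fixed point formula \cite[Ch.~3.4]{grha} gives $\#\mathrm{Fix}(f)=2+\operatorname{tr}\bigl(f_*|_{H_2(X;\R)}\bigr)$, a number one reads off from $\chi(t)=(1-t)^3s(t)$. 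Comparing with the fixed points exhibited above — the cusp, $\fp$, and the two solutions off $C$ — shows this list is exhaustive and that every fixed point is nondegenerate, i.e. has no multiplier equal to $1$.

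Finally I would read off reality and type. As in the proof of Proposition \ref{prop:orientation}, in the local coordinates $(x',y')=(1/y,x/y)$ at the cusp the relation $f^*\omega=\delta\omega$ forces $D\check f$ there to be diagonal with entries $\delta^{-3},\delta^{-2}$; both have modulus $<1$, so the cusp is an attracting real fixed point. At $\fp$ one multiplier is the tangential one $f_C'(\fp)=\delta$, while the transverse multiplier is computed from the explicit formula — equivalently, read off from \cite[Lemma~5.2]{ueh2} — to be $\delta^{-9}$; since $|\delta^{-9}|<1<|\delta|$, this is a real saddle. At each of the two fixed points off $C$ the form $\omega$ is nonzero, so $\det D\check f=\delta$ there and the two multipliers at that point have product $\delta$. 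Eliminating one variable from $\check f(x,y)=(x,y)$, just as in the proof of Lemma \ref{L:cplxFix}, leaves a low-degree real polynomial whose relevant discriminant turns out to be \emph{positive} for $\delta\approx 1.28$ — the opposite sign from the $3,3,n$ case — so both fixed points are real; evaluating $D\check f$ at each then exhibits one with both multipliers of modulus $>1$ (repelling) and one with multipliers of modulus $>1$ and $<1$ (saddle).

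The main obstacle is the rigor of this last step: certifying that the two fixed points off $C$ are genuinely real (not merely numerically so) requires pushing the elimination through and controlling the sign of the resulting discriminant together with the a priori bound $1<\delta<2$, exactly in the spirit of Lemma \ref{L:cplxFix}; and the attracting/repelling/saddle trichotomy at those two points is not pinned down by the Lefschetz relations alone — the holomorphic Lefschetz formula contributes only one equation relating the two unknown traces — so it must be extracted from the explicit derivative. A secondary point, absent in Theorem \ref{thm:33n}, is that $t=1$ is a multiple root of $\chi$, so the exclusion of fixed curves cannot simply be read off from simplicity of that root and instead relies either on the classification in \cite{dil} or on the direct verification that the $1$-eigenspace of $f_*$ is one-dimensional, with positivity of entropy as an extra safeguard.
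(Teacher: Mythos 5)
Your first and third paragraphs are essentially the argument the paper itself gives for Lemma \ref{lem:twocycles}: one computes the explicit formula for $\check f$ by the method of \cite{bediki}, solves the fixed-point equations $\check f(x,y)=(x,y)$ (finding exactly two fixed points on $C$ and two off $C$, all real), observes that no fixed point can sit on an exceptional curve, takes the multipliers at the cusp and at $\fp$ from the local normal form at the cusp and from \cite[Lemma 5.2]{ueh2}, and uses $\det D\check f=\delta$ off $C$ together with the explicit derivative to see that one of the remaining points is repelling and the other a saddle. Note that completeness of the list already follows from this computation: the fixed locus of the explicit birational map is a finite set of four points, so no Lefschetz input is needed for this lemma (the paper uses Lefschetz only later, to count periodic points of higher order).

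The genuine gap is in your second paragraph and in your closing remark. Your Lefschetz consistency check requires excluding curves of fixed points, and the justification you propose --- that the eigenvalue-$1$ eigenspace of $f_*$ on $H_2(X;\R)$ is one-dimensional, spanned by $[C]$ --- is false for this orbit data. Specializing \eqref{eqn:idcharpoly} gives $\chi(t)=(t-1)^3(t+1)s(t)$ (the printed factorization $(1-t)^3s(t)$ is missing the factor $t+1$: it has degree $11$ while $\dim H_2(X;\R)=12$), so $1$ has algebraic multiplicity $3$; and since $f_*$ preserves the intersection form and acts semisimply on the orthogonal complement of the $\delta^{\pm1}$-eigenlines (that complement is negative definite), the $1$-eigenspace is genuinely three-dimensional. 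This is exactly why the paper cannot repeat the ``simple root $1$'' argument it used for the $3,3,n$ data; its separate lemma that all fixed points are isolated instead rules out pointwise-periodic curves via the genus formula, disjointness, reality of the curve, and the $\int_U\omega$-scaling contradiction --- and that lemma is proved after, and using, Lemma \ref{lem:twocycles}, so it cannot be imported here without circularity. Your fallback appeal to a classification of invariant curves in \cite{dil}, and the remark that positive entropy excludes an isotropic fixed class, do not close the hole: the curves one must worry about are $(-2)$-curves, whose classes are neither isotropic nor forced by eigenvalue considerations to be multiples of $[C]$. Finally, a bookkeeping caution: with the corrected characteristic polynomial the trace of $f_*$ on $H_2(X;\R)$ is $2$, so the Lefschetz number is $4$, matching your four nondegenerate fixed points; reading the count off the printed $(1-t)^3s(t)$ would give $5$ and your check would appear to fail. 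The clean repair is to drop the eigenspace claim and let completeness rest, as in the paper, on the finiteness of the solution set of the explicit fixed-point system.
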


\noindent The formulas for the multiplier at the two fixed points in C were established in much greater generality by Uehara \cite{ueh2}.  

\begin{lem}
All fixed points of $f$ are isolated.
\end{lem}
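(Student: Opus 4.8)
The plan is to show that the fixed point set $\mathrm{Fix}(f)=\{p\in X:f(p)=p\}$, which is an algebraic subset of $X$ since $f$ is an automorphism of a projective surface, has no component of positive dimension. Equivalently --- since a one-dimensional component of $\mathrm{Fix}(f)$ would be an irreducible curve $V\subset X$ with $f|_V=\id$ --- I would argue that $f$ fixes no irreducible curve pointwise. Any such $V$ is in particular $f$-invariant, so $f_*[V]=[V]$; that is, $[V]$ lies in $\ker(f_*-\id)$ acting on $H_2(X;\R)$. Thus the whole statement reduces to understanding invariant curve classes, exactly as in the lemma preceding Theorem \ref{thm:33n}.

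The crux is to identify $\ker(f_*-\id)$. In Section \ref{sec:saddle} the corresponding step was immediate because $1$ was a \emph{simple} root of the characteristic polynomial of $f_*$, so the $1$-eigenspace was automatically the line spanned by the (always invariant) anticanonical class $[C]=-K_X$. For the orbit data $2,4,5,\id$, however, $1$ is a root of multiplicity three of $\chi(t)=(1-t)^3 s(t)$, so I would instead examine the Jordan structure of $f_*$ on the three-dimensional generalized $1$-eigenspace, using the explicit matrix of $f_*$ read off from the orbit data via \eqref{h2action}. The claim --- and the one point where I expect genuine work, as opposed to formalities --- is that this unipotent part is a single $3\times 3$ Jordan block, so that $\ker(f_*-\id)$ is again one-dimensional, spanned by $[C]$. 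This is consistent with the cuspidal (rather than nodal or smooth) nature of the invariant anticanonical curve $C$, which is precisely what enlarges the unipotent block; it is in any case a finite linear-algebra check once the matrix is written down.

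Granting this, suppose $V\subset X$ is an irreducible curve with $f|_V=\id$. Then $[V]=c[C]$ for some real $c$. Pairing with a line class $L$ gives $[V]\cdot L=3c$, which is $\geq 0$ and vanishes only if $V$ is an exceptional curve --- impossible, since an exceptional class is not a multiple of $[C]$. Hence $c>0$, so $C\cdot[V]=c\,C^2=-2c<0$, because $C$ is irreducible with $C^2=9-11=-2$. A reduced irreducible curve meeting $V$ negatively must be a component of $V$, so $C\subset V$, and since $V$ is irreducible, $V=C$. But $f$ restricts on $C$ to the map $f_C$ with $f_C(\gamma(x))=\gamma(\delta x+\tau)$ and $\delta>1$, so $f|_C$ is not the identity --- a contradiction. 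Therefore $\mathrm{Fix}(f)$ is finite, i.e. every fixed point of $f$ is isolated. An entirely analogous argument applied to $f^n$ (which still preserves $C$, acts there by $\gamma(x)\mapsto\gamma(\delta^n x+\cdots)$ with $\delta^n\neq 1$, and for which $C$ remains the only invariant irreducible curve) shows more generally that every periodic point of $f$ is isolated, as Theorem \ref{thm:245id} will eventually require.
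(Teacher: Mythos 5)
Your argument stands or falls on the claim that the generalized $1$-eigenspace of $f_*$ on $H_2(X;\R)$ is a single $3\times 3$ Jordan block, so that $\ker(f_*-\id)$ is spanned by $[C]$. When you actually do the ``finite linear-algebra check,'' it refutes the claim. Using \eqref{h2action} together with the relation $E_i^-\sim L-\sum_{j\neq i}E_{j,1}$ (and $\sigma=\id$), each of the three classes $v_i:=-L+\sum_{j=1}^{n_i}E_{i,j}$, $i=1,2,3$, is fixed by $f_*$; for instance
$$
f_*v_1=-(2L-E_{1,1}-E_{2,1}-E_{3,1})+E_{1,2}+(L-E_{2,1}-E_{3,1})=-L+E_{1,1}+E_{1,2}=v_1 .
$$
So $\ker(f_*-\id)$ is three-dimensional (with $[C]=-(v_1+v_2+v_3)$), the unipotent part is in fact trivial rather than a Jordan block, and an $f_*$-invariant curve class need not be a multiple of $[C]$. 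This is not a repairable detail within your strategy: the invariant class $-v_1=L-E_{1,1}-E_{1,2}$ has self-intersection $-1$ and is effective (it is the class of the line through $p_{1,1}$ and $p_{1,2}$, assuming that line contains no further blown-up points), so homological invariance alone cannot force a curve to coincide with $C$, and $f$ may well admit invariant curves besides $C$. What has to be excluded is only a curve that is \emph{pointwise} fixed by some iterate, and that requires dynamical input, not just the eigenspace computation. Your closing extension to $f^n$ inherits the same problem, since the same three-dimensional space is fixed by $f_*^n$.

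For contrast, the paper's proof never identifies $\ker(f_*-\id)$ with $\R[C]$ (that shortcut was available only in the $3,3,n$ case, where $1$ is a simple root). Instead it assumes a curve $V$ with $f^k|_V=\id$ and derives a contradiction from the dynamics: $V$ cannot meet $C$ because the multipliers at the two fixed points on $C$ have infinite order while $f^k|_V=\id$ forces finite order; the genus formula then makes $V$ a smooth rational $(-2)$-curve, and $\det Df^k=\delta^k$ rules out intersections between distinct pointwise periodic curves; a fixed point of $f$ on $V$ is real by Lemma \ref{lem:twocycles}, forcing $V$ to be real, so $V\cap X(\R)$ contains an $f_\R$-invariant circle $S$; since $\pm1$ is not a root of $\chi_\R$, $S$ is null-homologous and bounds an $f_\R$-invariant region $U$ disjoint from $C(\R)$, and then $\int_U\omega=\delta\int_U\omega$ with $0<\int_U\omega<\infty$ is absurd. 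These ingredients (multipliers, genus formula, reality, and the two-form $\omega$ with $f^*\omega=\delta\omega$) are exactly what replace the false linear-algebra step, so as written your proposal has a genuine gap at its central claim.
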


\begin{proof}
Suppose not.  Then there exists an irreducible algebraic curve $V\subset X$ and an integer $k\in \N$ such that $f^k(p) = p$ for all $p\in V$.  Since $\delta >1$, the only periodic points in $C$ are the two fixed points, so $V$ can only meet $C$ at one of these.  But $f|_V$ has finite order, whereas the multipliers at the two fixed points do not.  So $V\cap C = \emptyset$.

It follows from the genus formula (as in e.g. the proof of Theorem 3.6 in \cite{dijaso}) that $V$ is a smooth rational curve with self-intersection $-2$ and that $V$ meets any other pointwise periodic curve $V'\subset X$ for $f$ transversely.  So if
if $p\in V\cap V'$ is such a point, the multipliers of $Df^k$ at $p$ must have finite order.  This contradicts the fact that $\det Df^k(p) = \delta^k$, which holds because of the transformation property for the two form $\omega$.  Thus any two pointwise periodic curves for $f$ are disjoint.  Since they have negative self-intersection, their homology classes \chg{are} distinct.  By periodicity, the class of $V$ lies in the $f_*$-invariant subspace corresponding to the factor $(t-1)^3$ in $\chi(t)$, i.e $f_*V = V$, and so $f(V) = V$ is actually $f$-invariant.

Since $V$ has genus zero, $f|V$ is a rotation of order $k$, and there is a fixed point $p\in V$ for $f$.  By Lemma \ref{lem:twocycles}, $p\in X(\R)$.  This implies that $V = \overline{V}$ is real.  Otherwise, because $f$ is real, $\overline{V}$ is a distinct pointwise periodic curve for $f$ that meets $V$ at $p$, and we have seen that this cannot happen.  Since $V$ is smooth and real, $V\cap X(\R)$ contains no isolated points, so $V\cap X(\R)$ contains a circle $S$ invariant by $f_\R$.  The roots of $\chi_\R$ do not include $\pm 1$, so the homology class of $S$ in $H_1(X;\R)$ must be trivial.  In particular $S$ separates $X(\R)$ into two $f_\R$-invariant connected components.  One of these $U\subset X(\R)\setminus S$ is disjoint from $C(\R)$, and so (changing the sign of $\omega$ if necessary) $0 < \int_U \omega < \infty$.  This leads us to a contradiction:
$$
\int_U \omega = \int_{f_\R(U)}\omega = \int_U f^*\omega = \delta \int_U \omega.
$$
So $S$ and therefore also $V$ do not exist.
\end{proof}

The Lefschetz fixed point formula now gives that the number of fixed points of $f^n$, counted with multiplicity, is 
\begin{equation}
\label{eqn:cpxcount}
\nfix(f^n) := 2 + \mathop{\mathrm{tr}} f^n_*|H_2(X;\R) = 2 + 3 + \sum_{s(t) = 0} t^n,
\end{equation}
where the $2$ reflects the action of $f_*$ on top and bottom homology groups, and $3$ is the multiplicy of $1$ as a root of $\chi(t)$.

In order to count periodic points of $f_\R$, we must lift to the orientation cover $\hat\pi:\hat X\to X(\R)$.  Recall that $\hat X$ can be defined as the quotient by positive scaling of the determinant bundle $\det TX(\R)$.  The two points in any fiber $\hat\pi^{-1}(p)$, $p\in X(\R)$ correspond to the two possible local orientations of $X(\R)$ at $p$, and the diffeomorphism $f_\R$ lifts by pushing forward local orientations to an orientation-preserving diffeomorphism $\hat f:\hat X\to \hat X$.  We let $\iota:\hat X \to \hat X$ denote the orientation reversing diffeomorphism that swaps points in each fiber of $\hat\pi$.  

The first homology group $H_1(\hat X;\R)$ has dimension twice that of $H_1(X(\R);\R)$, and the intersection form on $H_1(\hat X;\R)$ is skew symmetric and non-degenerate.  The involution $\iota$ negates \chg{the intersection} form, i.e. $\pair{\iota_*\alpha}{\iota_*\beta} = -\pair\alpha\beta$.  Hence $H_1(X;\R) = H_+ \oplus H_-$, where the $+1$ and $-1$ eigenspaces $H_+$ and $H_-$ of $\iota_*$ are isotropic and dual to each other with respect to intersection.  In particular, $\dim H_- = \dim H_+ = \dim H_1(X(\R);\R)$.  Since $\hat\pi_*:H_1(\hat X;\R) \to H_1(X(\R);\R)$ is surjective and $\hat\pi\circ \iota = \hat\pi$, we have $\ker \hat\pi_* = H_-$ and $\hat\pi_*$ projects $H_+$ isomorphically onto $H_1(X(\R);\R)$.

\begin{lem}
For each $n\in\N$, a point $p\in \hat X$ is $n$-periodic for $\hat f$ if and only if $\hat\pi(p)$ is $n$-periodic for $f_\R$. 
\end{lem}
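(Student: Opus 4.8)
The plan is to recast the statement in terms of how $f_\R$ acts on orientations and then reduce it to a positivity statement about Jacobians along periodic orbits. Recall that a point of $\hat X$ is a pair $(q,o)$, where $q\in X(\R)$ and $o$ is one of the two orientations of the tangent plane $T_qX(\R)$; with this description $\hat\pi(q,o)=q$, the deck involution is $\iota(q,o)=(q,-o)$, and the lift of $f_\R$ is $\hat f(q,o)=(f_\R(q),(Df_\R(q))_*o)$, where $(Df_\R(q))_*$ denotes the pushforward of orientations. Since $\hat\pi\circ\hat f=f_\R\circ\hat\pi$, any $p\in\hat X$ that is $n$-periodic for $\hat f$ projects to an $n$-periodic point $\hat\pi(p)$ of $f_\R$; this already gives the forward implication and uses nothing particular about $f$.

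For the converse, suppose $q:=\hat\pi(p)$ is $n$-periodic for $f_\R$ and write $p=(q,o)$. Then $\hat f^n(p)=(q,(Df_\R^n(q))_*o)$ lies in the fiber $\hat\pi^{-1}(q)=\{p,\iota(p)\}$, and it equals $p$ precisely when $(Df_\R^n(q))_*o=o$, that is, precisely when $\det Df_\R^n(q)>0$; otherwise $\hat f^n(p)=\iota(p)\neq p$ and $p$ is not $n$-periodic. Thus the lemma is \emph{equivalent} to the assertion that $\det Df_\R^n(q)>0$ for every $n$-periodic point $q$ of $f_\R$, and it remains only to show that $f_\R$ preserves orientation along each of its periodic orbits.

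Here I would argue by cases according to whether the orbit of $q$ meets the invariant cubic $C$. If it does not, then, since $C$ is $f$-invariant, the whole orbit lies in $X(\R)\setminus C$; as shown in the proof of Theorem~\ref{thm:reciprocal}, $f_\R$ preserves orientation on $X(\R)\setminus C$, so $\det Df_\R^n(q)=\prod_{j=0}^{n-1}\det Df_\R(f_\R^j q)>0$. If instead $q\in C$, then $q$ is a periodic point of the restriction $f_C$; since $f_C$ acts on $C_{reg}$ by $\gamma(x)\mapsto\gamma(\delta x+\tau)$ with $\delta>1$, the affine map $x\mapsto\delta x+\tau$ has only the fixed point $x=\tau/(1-\delta)$ as a periodic point, so the periodic points of $f_C$ are exactly the cusp of $C$ and $\gamma(\tau/(1-\delta))$. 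Both of these are fixed, both lie in $X(\R)$, and by Lemma~\ref{lem:twocycles} their multipliers are the positive real numbers $\delta^{-2},\delta^{-3}$ and $\delta,\delta^{-9}$ respectively, so $\det Df_\R(q)>0$ and hence $\det Df_\R^n(q)=(\det Df_\R(q))^n>0$. In both cases the Jacobian is positive, which finishes the argument.

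The only substantive content here, beyond the orientation-cover bookkeeping, is the on-$C$/off-$C$ dichotomy together with the positivity of the multipliers on $C$; the latter is exactly what Lemma~\ref{lem:twocycles} and the affine normal form of $f_C$ supply, and the former rests on the $f$-invariance of $C$. I therefore expect the main (and fairly mild) obstacle to be making that dichotomy airtight — in particular, confirming that a periodic point of $f_\R$ lying on $C$ can only be one of the two fixed points analyzed earlier, and that $Df_\R$ is genuinely defined there (which it is, since $f$ is an automorphism of $X$ and neither fixed point is blown up).
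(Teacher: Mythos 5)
Your proof is correct and follows essentially the same route as the paper: reduce the converse to orientation preservation of $Df_\R^n$ at the periodic point, handle points off $C$ via $f^*\omega=\delta\omega$ with $\delta>0$, and handle points on $C$ by noting the only periodic points there are the two fixed points, whose multipliers (Lemma \ref{lem:twocycles}) are positive. The extra bookkeeping you include for the orientation cover just makes explicit what the paper states in terms of minimal periods being $n$ or $2n$.
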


\begin{proof}
Since $\hat\pi:\hat X \to X$ is $2$-to-$1$, we see that $p\in \hat X$ is periodic for $\hat f$ if and only if $\hat\pi(p)$ is periodic for $f_\R$.  If the minimal period of $\hat\pi(p)$ is $n$, then the minimal period of $p$ is either $n$ or $2n$, depending on whether $f_\R^n$ is locally orientation preserving or reversing at $\hat\pi(p)$.  If $\hat\pi(p) \notin C$, then the facts that $f^*\omega = \delta\omega$ and $\delta>0$ guarantee that $f_\R$ is orientation preserving at $\hat\pi(p)$.  On the other hand, we know from Lemma \ref{lem:twocycles} that $f_\R$ is locally orientation preserving at the only two periodic (fixed) points on $C$.  In both cases, we conclude that $\hat\pi(p)$ has the same minimal period as $p$.
\end{proof}

\begin{lem}
\label{lem:doublecharpoly}
The characteristic polynomial of $\hat f_*:H_1(\hat X;\R)\to H_1(\hat X;\R)$ is $(\chi_\R(t))^2$.
\end{lem}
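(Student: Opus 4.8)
The plan is to exploit the eigenspace decomposition $H_1(\hat X;\R) = H_+\oplus H_-$ of $\iota_*$ to factor the characteristic polynomial of $\hat f_*$ as a product of two pieces, to identify the piece coming from $H_+$ with $\chi_\R$ directly, and to identify the piece coming from $H_-$ with $\chi_\R$ using intersection duality together with the reciprocity of $\chi_\R$ established in Theorem \ref{thm:reciprocal}.

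First I would observe that, since $\hat f$ is defined by pushing local orientations forward, it commutes with the fiberwise swap $\iota$; hence $\hat f_*\iota_* = \iota_*\hat f_*$, and $\hat f_*$ preserves each of the eigenspaces $H_\pm$. Consequently the characteristic polynomial of $\hat f_*$ is the product of those of $A := \hat f_*|_{H_+}$ and $B := \hat f_*|_{H_-}$. For the $H_+$ factor, I would use the relation $\hat\pi\circ\hat f = f_\R\circ\hat\pi$, which gives $\hat\pi_*\circ\hat f_* = (f_\R)_*\circ\hat\pi_*$; since $\hat\pi_*$ restricts to an isomorphism $H_+\to H_1(X(\R);\R)$ (as noted just above), it conjugates $A$ to $(f_\R)_*$, so the characteristic polynomial of $A$ is $\chi_\R(t)$.

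The $H_-$ factor is the one requiring an extra idea. Here I would use that $\hat X$ is a closed oriented surface (the orientation cover of the compact surface $X(\R)$) and that $\hat f$ is orientation-preserving, so $\hat f_*$ preserves the non-degenerate skew form $\pair{\cdot}{\cdot}$ on $H_1(\hat X;\R)$. Because $H_+$ and $H_-$ are isotropic, half-dimensional, and mutually dual under $\pair{\cdot}{\cdot}$, the induced perfect pairing $H_+\times H_-\to\R$ is $\hat f_*$-equivariant, which forces $B$ to be conjugate to the inverse transpose $(A^{-1})^{T}$ of $A$. Thus the characteristic polynomial of $B$ has as its roots the reciprocals of the roots of $\chi_\R$ (all nonzero, since $\hat f$ is a diffeomorphism). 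By Theorem \ref{thm:reciprocal}, $\chi_\R$ is reciprocal, so this reciprocated multiset of roots is again the multiset of roots of $\chi_\R$; as $\chi_\R$ and the characteristic polynomial of $B$ are both monic of the same degree, they coincide. Multiplying the two factors gives that the characteristic polynomial of $\hat f_*$ is $(\chi_\R(t))^2$.

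The main obstacle is precisely the $H_-$ step: one must combine three ingredients carefully — that $\hat f$ is orientation-preserving (so that $\hat f_*$ lies in the symplectic group of $\pair{\cdot}{\cdot}$), that $H_\pm$ are a dual pair of Lagrangians for this form (so that $B$ is the inverse transpose of $A$, not merely something with a related spectrum), and the reciprocity of $\chi_\R$ from Theorem \ref{thm:reciprocal} (which is what upgrades "reciprocal of $\chi_\R$" to "equal to $\chi_\R$"). Everything else is bookkeeping with the eigenspace splitting and the commuting square $\hat\pi\circ\hat f = f_\R\circ\hat\pi$.
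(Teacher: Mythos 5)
Your proof is correct and follows essentially the same route as the paper: split $H_1(\hat X;\R)=H_+\oplus H_-$ under $\iota_*$, identify $\hat f_*|_{H_+}$ with $(f_\R)_*$ via $\hat\pi_*$, use invariance of the intersection form and the duality of the isotropic subspaces $H_\pm$ to see that $\hat f_*|_{H_-}$ has the spectrum of the inverse, and conclude by reciprocity of $\chi_\R$. The only (harmless) difference is that you cite Theorem \ref{thm:reciprocal} for reciprocity, whereas the paper checks it directly from the explicit formula $\chi_\R(t)=(1+t^2)s(t)$.
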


\begin{proof}
The map $\hat f$ commutes with $\iota$ so the eigenspaces $H_-$ and $H_+$ are $\hat f_*$-invariant.  In particular $\hat\pi_*$ conjugates the action of $\hat f_*$ on $H_+$ to that of $(f_\R)_*$ on $H_1(X(\R);\R)$, and therefore the characteristic polynomial of $\hat f_*|H_+$ is $\chi_\R(t)$.  

Pushforward $\hat f_*$ preserves the intersection form, so for any $\alpha \in H_+$, $\beta \in H_-$, we have
$$
\pair{\alpha}{\hat f_*\beta} = \pair{\hat f^{-1}_*\alpha}{\beta}.
$$
That is, the action of $\hat f_*$ on $H_-$ is congruent, and so conjugate, to that of $\hat f_*^{-1}$ on $H_+$.  The characteristic polynomial of $\hat f_*$ acting on $H_-$ is therefore given be $t^d\chi_\R(1/t)$, where $d=\deg \chi_\R(t)$.  One checks from the formulas above that $\chi_\R(t) = t^d\chi_\R(1/t)$ is a reciprocal polynomial, which finishes the proof.
\end{proof}

\begin{rem}
\label{rmk:reciprocal}
As one can see from the formulas in the appendix, the fact that $\chi_\R$ is reciprocal is apparently true in our context regardless of the orbit data.  We do not have a good non-empirical explanation for this.  For the map $f$ in this section, it can be seen as a consequence of the facts that the characteristic polynomial $\chi$ of $f_*$ is reciprocal (for intersection theoretic reasons) and that $\chi_\R$ and $\chi$ agree up to cyclotomic factors.  
\end{rem}

Now we use the fixed point formula to count fixed points of $f_\R^n$ with the help of the previous two lemmas.  It is important to remember here that, unlike periodic points of $f$, the index $\mu(f_\R^n,p)$ of an $n$-periodic point $p\in X(\R)$ for the real map $f_\R$ can be either positive or negative.  When $p$ is non-degenerate, i.e. when neither multiplier of $Df_\R^n(p)$ is $1$, we have
$$
\mu(f_\R^n,p) = \mathop{\mathrm{sign}} \det(Df^n(p)-\id) = \pm 1. 
$$
A more general formula in \cite[Theorem 2.1]{eile} implies that the real and complex multiplicities always satisfy 
$$
|\mu(f_\R^n,p)| \leq \mu(f^n,p),
$$
with equality precisely when $p$ is, again, non-degenerate.

Since $f^*\omega = \delta \omega$ we infer that $\mu(f_\R^n,p) = -1$ if and only if  $p\in X(\R)$ has saddle type with both multipliers positive.
In any case, $\mu(\hat f^n,p) = \mu(f_\R^n,\pi(p))$ for any $n$-periodic point $p\in \hat X$.

Applying the fixed point formula to $\hat f^n$ gives
$$
2\sum_{f_\R^n(p) = p} \mu(f_\R^n,p) = \sum_{\hat f^n(p)=p} \mu(\hat f^n,p) =  \sum_{j=0}^2 (-1)^j \mathop{\mathrm {tr}} \hat f^n_*|H_j(\hat X; \R) = 2 - \mathop{\mathrm{tr}} \hat f^n_*|H_1(\hat X; \R).
$$
From Lemma \ref{lem:doublecharpoly} and the formula $\chi_\R(t) = (t^2+1)s(t)$, we further have 
$$
\mathop{\mathrm{tr}} \hat f^n_*|H_1(\hat X; \R) = 4\,\re i^n + 2\sum_{s(t) = 0} t^n.
$$
For $n$ a multiple of $4$, we combine these formulas and obtain
\begin{equation}
\label{eqn:realcount}
\nfix^+(f_\R^n) - \nfix^-(f_\R^n) = \sum_{f_\R^n(p) = p} \mu(f_\R^n,p) = -1-\sum_{s(t) = 0} t^n,
\end{equation}
where $\nfix^+(f_\R^n)$ is the sum of $\mu(f_\R^n,p)$ over those $p$ where the intersection index is positive, and $\nfix^-(f_\R^n,p)$ is the magnitude of the sum over those $p$ where it is negative.

Now $\nfix(f^n) \geq \nfix^+(f_\R^n) + \nfix^-(f_\R^n)$, so we see from \eqref{eqn:cpxcount} and \eqref{eqn:realcount} that
$$
\nfix^+(f_\R^n) \leq 2 \quad\text{and}\quad \nfix^-(f_\R^n) = \nfix^+(f_\R^n) + \nfix(f^n) - 4,
$$
for all $n$ divisible by $4$.  Lemma \ref{lem:twocycles} tells us that $\nfix^+(f_\R) = 2$, so we actually have equality $\nfix(f^n) = \nfix^+(f_\R^n) + \nfix^-(f_\R^n)$ for all $n$ divisible by $4$.  In particular, all $n$-periodic points of $f$ are real.  Moreover, except for the repelling and attracting fixed points of $f$, all satisfy $\mu(f_\R^n,p) = - \mu(f^n,p)$; i.e. by the discussion above they have saddle type.

As every $n$ periodic point for $f$ is also $4n$ periodic, the Theorem is proved. 
\qed

\begin{rem}
The automorphisms associated to the orbit data in Theorem {\maximal} each have a repelling two cycle outside $X(\R)$.  One can, however, repeat the above analysis for these maps to show that all points of minimal period three and higher are saddles contained in $X(\R)$.
\end{rem}

\section{Concluding observations and questions}
\label{sec:conclusion}

In closing, we stress that this work only scratches the surface concerning the general issue of real dynamics of complex surface automorphisms.  Here we list some further problems of interest to us.  

The appendix summarizes an exhaustive case-by-case computation of characteristic polynomials for $(f_\R)_*$ associated to various sets of orbit data.
The typical situation seems to be that $(f_{\R})_*:H_1(X,\R)\to H_1(X,\R)$ has an eigenvalue outside the unit circle but none as large as the maximal eigenvalue $\delta$ of $f_*:H_2(X,\R)\to H_2(X,\R)$ for the ambient complex automorphism.  For such maps we may only conclude (using Yomdin's bound) that $h_{top}(f_{\R})$  lies in some compact subinterval of $(0,\log\delta]$.  

There are, however, some sets of orbit data beyond those described in Theorem {\maximal } that give real maps with maximal homology growth, i.e. $\rho((f_\R)_*) = \delta$, and therefore also maximal entropy.  One can check from the formulas in the appendix that these include the following.
\begin{itemize}
 \item $\sigma = \id$ is the identity, $n_1=2, n_2 =3$ and $n_3 \geq 6$;
\item $\sigma = \id$, $n_1=2, n_2 =4$ and $n_3 \geq 5$;
\item $\sigma = (12)$ is a transposition, and $n_1=1, n_2 =4$ and $n_3 \geq 6$;
\item $\sigma = (12)$, $n_1=1, n_2 =5$ and $n_3 \geq 4$;
\item $\sigma = (12)$, $n_1=1$, $n_2 \ge 8$ and $n_3 =2$.
\end{itemize}

\begin{prob}
Are there any other basic real maps as in Proposition \ref{prop:realizable} whose real dynamics have maximal entropy?
\end{prob}

\noindent We have experimentally observed, by letting one or more of the orbit lengths tend to infinity in various ways, a strong tendency for the ratio $\rho(f_*)/\rho((f_{\R})_*)$ to approach $2$ when orbit lengths become large.  This somewhat supports the idea that at least maximal homology growth is atypical for $f_\R$.

It would be natural to try to understand the real dynamics of maximal homology growth automorphisms in more detail.

\begin{prob}
Find a topological/combinatorial model for the dynamics of the real map $f_\R:X(\R)\to X(\R)$ when it has maximal homology growth.
\end{prob}

\noindent This has been done in e.g. \cite{bedi} for certain birational surface maps, but it seems harder in the present context.  

At the other extreme, there are a few cases where $f_{\R*}$ is periodic or, more generally, has spectral radius one.  These include 
\begin{itemize}
\item $\sigma=(123)$ is cyclic, $n_1=1,n_2=4,n_3=8$ : period= $180$
\item $\sigma=(123)$ is cyclic, $n_1=2,n_2=3,n_3=5$ : period= $84$
\item $\sigma=(123)$ is cyclic, $n_1=3,n_2=4,n_3=5$ : period= $126$
\item $\sigma=(123)$ is cyclic, $n_1=3,n_2=4,n_3=6$ : period= $60$
\item $\sigma=(123)$ is cyclic, $n_1=3,n_2=5,n_3=5$ : period= $168$
\item $\sigma=(123)$ is cyclic, $n_1=1,n_2=3,n_3=9$ : homology classes grow linearly under iterated pushforward. 
\end{itemize}
Lack of homology growth certainly does not imply zero entropy for diffeomorphisms of compact surfaces generally.  Pictures (e.g. {\cite[Figure~1]{can2}} and {\cite[Figure~2]{mcm2}}) of the real dynamics of automorphisms on K3 surfaces strongly suggest that lack of homology growth can coexist with positive entropy.  In these, the real surface is a sphere, i.e. simply connected, so the action $(f_\R)_*$ is automatically trivial, but orbit portraits clearly indicate complicated dynamics.  Computer pictures we have generated for our maps seem more equivocal.

\begin{prob}
Are there real rational surface automorphisms with $h_{top}(f) > 0$ but $h_{top}(f_\R) = 0$?    More generally,
are there instances in which exactly one of the inequalities is strict in the chain
$$
\log\rho((f_{\R})_*) \leq h_{top}(f_\R) \leq h_{top}(f) = \log\rho(f_*)?
$$
\end{prob}

\noindent\chg{We point out that on irrational surfaces, it can happen that only one inequality is strict.  For instance, if $f:X \to X$ is the complexification of a linear Anosov diffeomorphism on a real torus $X(\R)$, then $h_{top}(f_\R) = \log \rho((f_\R)_*) < \log \rho(f_*) = h_{top}(f)$.}

All results in this article depend heavily on restricting attention to automorphisms $f:X\to X$ that properly fix the cuspidal anticanonical curve $C$.  It is known \cite{beki2} that there are many basic maps $\check f:\cp^2\to \cp^2$ that lift to real automorphisms with positive entropy but which have no invariant curve at all.  

\begin{prob}
Find an alternative to Theorem \ref{thm:realaction} for computing the action $f_{\R*}$ on homology in the absence of an invariant anticanonical curve.
\end{prob}

Instead of considering the action of $f_\R$ on homology, one can consider the action $(f_{\R})_*:\pi_1(X(\R))\to \pi_1(X(\R))$ on the fundamental group of $X(\R)$.  If one lets $\ell(\gamma)$ denote the (minimal) word length of an element $\gamma\in\pi_1(X(\R))$ with respect to some set of generators for $\pi_1(X(\R))$, then one has \cg{\cite{bow}}
$$
h_{top}(X(\R)) \geq \limsup_{n\to\infty} \ell((f_{\R})^n_*\gamma)^{1/n}. 
$$
The right side is maximized by letting $\gamma$ range through a set of generators, and we denote the maximum by $\rho_{\pi_1}(f_\R)$, even though it is not necessarily the spectral radius of a linear operator.  
Certainly $\rho_{\pi_1}(f_\R) \geq \rho((f_{\R})_*)$.  We propose in future work to investigate

\begin{prob}
To what extent can $\rho_{\pi_1}(f_\R)$ be computed explicitly? In particular, are there examples where $h_{top}(f) = \log\rho_{\pi_1}(f_\R) > \log \rho((f_{\R})_*)$?
\end{prob}

\section*{Appendix: more general orbit data}

For the sake of completeness, we give here the analogues of Lemma \ref{lem:order} for more general orbit data $n_1,n_2,n_3,\sigma$ along with \cg{formulas for the characteristic polynomials of the actions $f_*:H_2(X;\R)\to H_2(X;\R)$ and $(f_\R)_*:H_1(X(\R);\R)\to H_1(X(\R);\R)$ on the middle homology groups.}  The characteristic polynomial for $f_*$ is known to be reciprocal, but in fact it turns out in all cases considered here that the characteristic polynomial for $(f_\R)*$ is also reciprocal of the form
\begin{equation}
\label{eqn:reciprocal}
\frac{1}{t+1} \left[ \phi(t) - (-t)^{n_1+n_2+n_3+1} \phi(1/t) \right],
\end{equation}
where $\phi$ is a polynomial that depends on the orbit data.

\begin{table}[t] 
\renewcommand{\arraystretch}{1.3}
\begin{tabular}{|c|l|}
  \hline
 Orbit Length $(n_1,n_2,n_3)$  &\phantom{AaA}   Order in the interval $ [ f_C(p_2^+)\ ,\ p_2^+ ] \subset C $\phantom{AaA}  \\
 \hline\hline
  $n_1=n_2=1$ & \hspace{2cm}$f_C(p_2^+)\ \prec\  f^{-1}_C(p_1^+) \ \prec \ f^{2}_C(p_3^+) \ =\  p_2^+  $\\ 
  \hline
  $n_1=1, n_2=2 $   &\hspace{2cm}$f_C(p_2^+)\ \prec\  f^{-1}_C(p_1^+) \ \prec \ f^{2}_C(p_3^+) \  \prec \ p_2^+$ \\ 
  \hline
  $n_1=1, n_3=2 $  &\hspace{2cm}$f_C(p_2^+)\ \prec\  f^{-1}_C(p_1^+) \ \prec \ f^{-2}_C(p_3^+) \ \prec \  p_2^+$ \\ 
  \hline
    $n_1=1, n_2+1= n_3  $  &\hspace{2cm}$f_C(p_2^+)\ \prec\  f^{-1}_C(p_1^+) \ \prec \  p_3^+ \ \prec \  p_2^+ $ \\ 
  \hline
      $n_1=1, 3\le n_2 < n_3-1 $ &\hspace{2cm}$f_C(p_2^+)\ \prec\  f_C(p_3^+) \ \prec \  f^{-1}_C(p_1^+) \ \prec \  p_2^+$ \\ 
  \hline
   $n_1=1, 3\le n_3 \le n_2-1 $ &\hspace{2cm}$f_C(p_2^+)\ \prec\    p_3^+ \ \prec \  f^{-1}_C(p_1^+) \ \prec \   p_2^+$ \\ 
  \hline
 $2 \le n_1 \le n_2 < n_3 $ &\hspace{2cm}$f_C(p_2^+)\ \prec\  f_C(p_3^+) \ \prec \  p_1^+ \ \prec \   p_2^+$ \\ 
  \hline
$2 \le n_1 \le n_3 \le n_2 $ &\hspace{2cm}$f_C(p_2^+)\ \prec\  p_1^+ \ \prec \   p_3^+ \ \prec \  p_2^+ $ \\ 
  \hline
  $n_1=n_2=n_3$, $n_1=1, n_2=n_3$ & Not realizable by a basic map properly fixing $C$ \\
 $n_1=n_2=2$, and $n_1=n_3=2$ & \\
    \hline
  \end{tabular}
\vspace{.2cm}
\caption{Cyclic Permutation, $\delta>1$, $n_1 = \min\{n_1,n_2,n_3\}$}
\label{table:cyclic}
\renewcommand{\arraystretch}{1}
\end{table}

\cg{We consider first the cyclic case $\sigma = (123)$.  Recall that the characteristic polynomial for $f_*$, given above in Equation \eqref{eqn:cycliccharpoly}, is  
$$
\chi(t) = t-t^{n_1+n_2+n_3}+(t-1)(t^{n_1}+1)(t^{n_2}+1)(t^{n_3}+1).
$$}
Table \ref{table:cyclic} gives
\begin{itemize}
\item 
all the triples $n_1,n_2,n_3$ for which the orbit data $n_1+n_2+n_3 \geq 10$ is realizable by a basic map properly fixing $C$ with $\delta > 1$; and
\item (for realizable cases) the analogue of Lemma \ref{lem:order} which determines how the critical orbits are distributed in $C$.
\end{itemize}
From this information, one arrives at the following formula for $\phi$ in \eqref{eqn:reciprocal}, valid for \emph{all} realizable cases in the table:
$$
\begin{aligned} \phi(t) =& (-1)^{n_1+n_2+n_3+1}+\frac{ (-1)^{n_2+n_3+1} \left(t^2+1\right) t^{n_1}}{t-1}\\&+\frac{ (-1)^{n_2+n_3} \left(t^3-t^2+3 t+1\right) t^{n_3}}{t^2-1}-\frac{\left(t^2+1\right) t^{n_2}}{t+1}, \qquad{n_1\leq n_2\leq n_3}
\end{aligned}
$$

\begin{table}[h]
\renewcommand{\arraystretch}{1.3}
\begin{tabular}{|c|c|}
  \hline
 Orbit Length $(n_1,n_2,n_3)$  & \phantom{Aa}  Order in the interval $ [\ f_C( p_2^+)\ ,\ p_2^+\  ] \subset C$\phantom{Aa}   \\
 \hline\hline
  $(2,3,7) $ & $f_C(p_2^+)\ \prec\  f^4_C(p_3^+) \ \prec\  f^{-2}_C(p_1^+)\ \prec\  p_2^+   $\\ 
  \hline
  $(2,3,8) $ & $f_C(p_2^+)\ \prec\ f^{-1}_C(p_1^+) \ \prec\  f^{3}_C(p_3^+)\ \prec\  p_2^+   $\\ 
    \hline
  $(2,3,n_3),\ \ n_3\ge 9$ &$f_C(p_2^+)\ \prec\  f^{3}_C(p_3^+) \ \prec\  f^{-1}_C(p_1^+)\ \prec\  p_2^+   $\\
  \hline
 $(2,4,5) $ &$f_C(p_2^+)\ \prec\ f^{-1}_C(p_1^+) \ \prec\ f_C(p_3^+)\ \prec\  p_2^+   $\\ 
    \hline
     $(2,n_2,n_3),\ \ 4 \le n_2, 6 \le n_3$ &$f_C(p_2^+)\ \prec\  f_C(p_3^+) \ \prec\  f^{-1}_C(p_1^+)\ \prec\  p_2^+   $\\
  \hline
    $(n_1,n_2,n_3), \ 3 \le n_1 \lneqq n_2 \lneqq n_3 $  &$f_C(p_2^+)\ \prec\ f_C(p_3^+) \ \prec\  p_1^+\ \prec\  p_2^+   $\\
  \hline
  $n_i = n_j$ for $i\ne j$ & Degenerate Case\\
  \hline
\end{tabular}
\vspace{.2cm}
\caption{$\sigma=Id$,  $\delta>1$}\label{table:id}
\renewcommand{\arraystretch}{1}
\end{table}

\begin{table}[h]
\renewcommand{\arraystretch}{1.3}
\begin{tabular}{|c|c|}
  \hline
 Orbit Length $(n_1,n_2,n_3)$  & \phantom{Aa}  Order in the interval $ [\ f_C( p_2^+)\ ,\ p_2^+\  ]\subset C $\phantom{Aa}   \\
 \hline\hline
  $(1,8,2) $ & $f_C(p_2^+)\ \prec\  f^{-4}_C(p_3^+)   \ \prec\  f^{-2}_C(p_1^+)\ \prec\  p_2^+$\\ 
  \hline
 $(1,n_2,2),\ \ n_2\ge 9 $ & $f_C(p_2^+)\ \prec\  f^{-3}_C(p_3^+)   \ \prec\  f^{-2}_C(p_1^+)\ \prec\  p_2^+$\\ 
  \hline
 $(1,4,6) $ & $f_C(p_2^+)\ \prec\  f^{4}_C(p_3^+)   \ \prec\  f^{-1}_C(p_1^+)\ \prec\  p_2^+$\\ 
  \hline
 $(1,4,n_3),\ \ n_3\ge 7 $ & $f_C(p_2^+)\ \prec\  f^{3}_C(p_3^+)   \ \prec\  f^{-1}_C(p_1^+)\ \prec\  p_2^+$\\ 
  \hline
 $(1,n_2,n_3), n_3 \ge n_2-1 \ge 4 $ & $f_C(p_2^+)\ \prec\  f_C(p_3^+)   \ \prec\  f^{-1}_C(p_1^+)\ \prec\  p_2^+$\\ 
  \hline
 $(1,n_2,n_3),\ \ 4\le n_3 \le n_2-3, n_2\ge 5 $ & $f_C(p_2^+)\ \prec\  p_3^+   \ \prec\  f^{-1}_C(p_1^+)\ \prec\  p_2^+$\\ 
  \hline
     $2\le n_1 \lneqq n_2  \le n_3 , $ & $f_C(p_2^+)\ \prec \  f_C(p_3^+)  \ \prec\  p_1^+ \ \prec\  p_2^+$\\ 
  \hline
   $2\le n_1 \lneqq n_3 \lneqq n_2 $ & $f_C(p_2^+)  \ \prec\  p_1^+\ \prec \   p_3^+\ \prec\  p_2^+$\\ 
   \hline
      $3 \le n_3 \le n_1 \lneqq n_2  $ & $f_C(p_2^+)\ \prec \  p_3^+  \ \prec\  p_1^+ \ \prec\  p_2^+$\\ 
  \hline
  
     $(n_1,n_2,2),  n_1 \ge 3 $ & \\
 $ \ (n_1,n_2) \ne (3,6),(3,7),(3,8),(4,5) $ & $f_C(p_2^+)\ \prec \  f^{-1}_C(p_3^+)  \ \prec\  p_1^+ \ \prec\  p_2^+$\\ 
  \hline

  $(2,n_2,2),\ \ n_2\ge 8 $ & $f_C(p_2^+) \ \prec \  f^{-2}_C(p_3^+) \ \prec\  f^{-1}_C(p_1^+)\ \prec\  p_2^+$\\ 
    \hline
  
   $(2,3,6) $ & $f_C(p_2^+)  \ \prec\  p_1^+\ \prec \  f^{3}_C(p_3^+)\ \prec\  p_2^+$\\ 
  \hline
  $(2,3,7) $ & $f_C(p_2^+) \ \prec \  f^{3}_C(p_3^+) \ \prec\  p_1^+\ \prec\  p_2^+$\\ 
  \hline
  $(2,3,n_3),\ \ n_3\ge 8  $ & $f_C(p_2^+)  \ \prec\  p_1^+\ \prec \  f^{2}_C(p_3^+)\ \prec\  p_2^+$\\ 
  \hline
     $(2,7,2) $ & $f_C(p_2^+) \ \prec \  f^{-3}_C(p_3^+) \ \prec\  f^{-1}_C(p_1^+)\ \prec\  p_2^+$\\ 
  \hline
  
   $n_3=2,  \ (n_1,n_2) = (3,6),(3,7),(3,8),(4,5)$ & $f_C(p_2^+)  \ \prec\  p_1^+\ \prec \   f^{-2}_C(p_3^+)\ \prec\  p_2^+$\\ 
  \hline
$ (1, n_2, 3)$ & $f_C(p_2^+) =p_3^+ \ \prec\    f^{-1}_C(p_1^+)\ \prec\  p_2^+$\\ 
\hline
$(1, n_2, n_2-2)$ & $f_C(p_2^+) \ \prec\    f^{-1}_C(p_1^+)=p_3^+ \ \prec\  p_2^+$\\ 
\hline
   $n_1=n_2$ &  Not realizable by a basic map properly fixing $C$ \\
  \hline
\end{tabular}
\vspace{.2cm}
\caption{$\sigma=(1,2)$ and $\delta>1$}\label{table:transpo2}
\renewcommand{\arraystretch}{1}
\end{table}

\begin{table}[h]
\renewcommand{\arraystretch}{1.3}
\begin{tabular}{|c|c|}
  \hline
Orbit Length  $n_1 < n_2$  & $\phi(t)$   \\
 \hline\hline

  $n_1=3, n_2=4, n_3=3$ & $1-t^3-t^4+t^5$ \\
  \hline
  $n_1=1, n_2=4, n_3 \ge 6$ &$1-t-t^2+2 t^3-t^4$\\
  \hline
  $n_1=1, n_2 \ge5 ,n_3 \ge n_2-1$ & $t^n + ( -1-t^3+ 2 t^{n-2})/(t-1)$\\
  \hline 
  $n_1=2,n_2=3, n_3\ge 6 $ & $1-t^2+t^4-t^5$\\
  \hline 
  $n_3=2< n_2-1$ & $ 1+t^3 - t^{n_1} -t^{1+n_1}+t^{2+n_1}-t^{3+n_1}$ \\
  \hline
  $n_1=n_3 < n_2-1 $ & $1+ (-1)^{n_1} t^{n_1}+ t^{2 n_1} (t-1) $\\&$+ 2 (-1)^{n_1} t^{n_1} ((-t)^{n_1}+t)/(t+1)$\\
  \hline
  $2 < n_3 \le n_1-1 $ & $1+ (-1)^{n_3} ( t^{n_1}+t^{n_1+1}+t^{n_3+1}) +(t-1) t^{n_1+n_3}$\\& $- 2 (-1)^{n_3}  t^{n_1+2} (1-(-t)^{n_3-2})/(t+1)$\\ 
  \hline 
  $ n_1+1 \le n_3 < n_2-1$ & $1+ (-1)^{n_1}( t^{1+n_3}-t^{n_1} (1+t))+t^{n_1+n_3} (t-1) $\\&$+ 2 (-1)^{n_1+1}t^{2+n_3} (1-(-t)^{n_1-2})/(t+1) $\\ &$ + 2 (-1)^{n_1+1}(t^{1+n_3}-t^{2+n_1})/(t-1)$\\
  \hline 
  $ n_3 \ge n_2-1$ & $1+ (-1)^{n_1} t^{n_2} (1-t) + t^{n_1+n_2} + (-1)^{1+n_1} t^{n_1} (1+t) $\\
  &$+2 (-1)^ {1+n_1} (-t^{2+n_1}+t^{n_2})/(t-1) $\\
  \hline

\end{tabular}
\vspace{.2cm}
\caption{$\sigma=(1,2)$ }\label{table:transpo3}
\renewcommand{\arraystretch}{1}
\end{table}

Now suppose $\sigma$ is the identity permutation.  \cg{The characteristic polynomial for $f_*$ is (see Equation (2) in \cite{dil})
\begin{equation}
\label{eqn:idcharpoly}
\chi(t) =(t-1) (t^{n_1+n_2+n_3}-t^{n_1}-t^{n_2}-t^{n_3}+2) - (t^{n_1}-1) (t^{n_2}-1) (t^{n_3}-1).
\end{equation}} 
Table \ref{table:id} sums up the situation for $f_\R$.  Again, case-by-case computation leads to formulas
\begin{equation}
\label{eqn:phiid}
\begin{aligned}
\phi(t) &\ =\ 1-2 t+3 t^3-3 t^4+t^5, & \text{if }\ \ n_1=2, n_2=3 \\
\phi(t) &\ =\  1+ (-1)^{n_1} t^{n_1+1} - 2 t^{n_2} + (-1)^{1+n_1} t^{1+n_2} + t^{n_1+n_2}, &\qquad{n_1\leq n_2\leq n_3}\\
\end{aligned}
\end{equation}
for the polynomial $\phi(t)$ in \eqref{eqn:reciprocal}.

Finally we consider the case when $\sigma = (12)$ is a transposition.  \cg{The characteristic polynomial for $f_*$ is 
$$
\chi(t) =(t-1) (t^{n_3} (t^{n_1}+1) (t^{n_2}+1) - t^{n_1} - t^{n_2}-2) - (t^{n_1+n_2}-1) (t^{n_3}-1).
$$}
The situation for $f_\R$ is more complicated than before.  Results are listed in Table  \ref{table:transpo2}.  In the transposition case, we have not found a single formula for the polynomial $\phi$ that gives the characteristic polynomial \eqref{eqn:reciprocal}, so we list the possibilities case-by-case in Table \ref{table:transpo3}. \cg{ For the last three cases in Table  \ref{table:transpo2}, the parameters $t_i$ in Theorem \ref{thm:realizable} are not distinct, so Theorem \ref{thm:realizable} does not guarantee that there actually is a basic map that realizes the given orbit data.}

%


\end{document}